\theoremstyle{plain}
\newtheorem{lem}{Lemma}
\newtheorem{cor}{Corollary}
\newtheorem{prop}{Proposition}
\theoremstyle{definition}
\newtheorem{exm}{Example}
\newtheorem{dfn}{Definition}
\theoremstyle{remark}
\newtheorem{rmk}{Remark}
\newtheorem{conv}{Convention}
\renewcommand{\vec}{\mathbf}
\newcommand{\bs}{\boldsymbol}
\newcommand{\ds}{\displaystyle}
\newcommand{\IR}{\mathbb{R}}
\newcommand{\bmat}[1]{\begin{bmatrix} #1 \end{bmatrix}}
\renewcommand{\d}{\mathrm{d}}
\title{Weak Form of Stokes-Dirac Structures and Geometric Discretization of Port-Hamiltonian Systems\footnote{Accepted version of P. Kotyczka et al., Weak form of Stokes–Dirac structures and geometric discretization of port-Hamiltonian systems, J. Comput. Phys. 361 (2018) 442--476, https://doi.org/10.1016/j.jcp.2018.02.006.\newline  \copyright\; 2018. This manuscript version is made available under the CC-BY-NC-ND 4.0 license \newline http://creativecommons.org/licenses/by-nc-nd/4.0/.}}
\author{Paul Kotyczka\footnote{Univ Lyon, Université Claude Bernard Lyon 1, CNRS, LAGEP UMR 5007, 43 Boulevard du 11 Novembre 1918, 69622 Villeurbanne Cedex, France (until August 2017) / Technical University of Munich, Department of Mechanical Engineering, Chair of Automatic Control, Boltzmannstra{\ss}e 15, 85748 Garching, Germany. \texttt{kotyczka@tum.de}.},\; Bernhard Maschke\footnote{Univ Lyon, Université Claude Bernard Lyon 1, CNRS, LAGEP UMR 5007, 43 Boulevard du 11 Novembre 1918, 69622 Villeurbanne Cedex, France. \texttt{bernhard.maschke@univ-lyon1.fr}.} \; and Laurent Lefèvre\footnote{Univ. Grenoble Alpes, LCIS, 26902 Valence, France. \texttt{laurent.lefevre@lcis.grenoble-inp.fr}.}}
\begin{document}

\maketitle

\begin{abstract}
We present the mixed Galerkin discretization of distributed parameter port-Hamiltonian systems. On the prototypical example of hyperbolic systems of two conservation laws in arbitrary spatial dimension, we derive the main contributions: (i) A weak formulation of the underlying geometric (Stokes-Dirac) structure with a segmented boundary according to the causality of the boundary ports. (ii) The geometric approximation of the Stokes-Dirac structure by a finite-dimensional Dirac structure is realized using a mixed Galerkin approach and power-preserving linear maps, which define minimal discrete power variables. (iii) With a consistent approximation of the Hamiltonian, we obtain finite-dimensional port-Hamiltonian state space models. By the degrees of freedom in the power-preserving maps, the resulting family of structure-preserving schemes allows for trade-offs between centered approximations and upwinding. We illustrate the method on the example of Whitney finite elements on a 2D simplicial triangulation and compare the eigenvalue approximation in 1D with a related approach.

\textbf{Keywords:} Systems of conservation laws with boundary energy flows, port-Hamiltonian systems, mixed Galerkin methods, geometric spatial discretization, structure-preserving discretization.
\end{abstract}

\section{Introduction}
The \emph{port-Hamiltonian} (PH) approach for the modeling, interconnection and control of multi-physics systems underwent an enormous evolution during the past two decades. In this article, we concentrate on distributed parameter PH systems as initially presented in \cite{schaft2002hamiltonian}, and refer the reader to the books \cite{duindam2009modeling}, \cite{jacob2012linear} and \cite{schaft2014port} for a more general overview on theory and applications. The salient feature of a PH system is its representation in terms of (i) a \emph{linear geometric interconnection structure} -- a \emph{Stokes-Dirac structure} -- that describes the power flows inside the system and over its boundary and (ii) an energy functional (or more generally potentials) from which the constitutive or closure relations are derived, and which determines the nature of the system. Completely different systems -- linear/nonlinear or hyperbolic/parabolic \cite{zwart2016building} -- can share the same interconnection structure. PH systems are by definition \emph{open systems}, they interact with their environment through energy flow over \emph{boundary ports}. The \emph{in}- and \emph{outputs} in the sense of systems' theory and control are defined via a duality product whose value equals the exchanged power at the port. The definition of boundary port variables plays a crucial role in showing that a PH system is a well-posed boundary control systems \cite{legorrec2005dirac}. The definition of distributed power variables as in- and outputs is discussed in \cite{nishida2015boundary}.

The simulation and control by numerical methods, of \emph{complex} (complex geometries, nonlinearities, interdomain couplings) distributed parameter PH systems, requires a spatial discretization, which shall retain the underlying geometric properties related to \emph{power continuity}. According to the separation of the interconnection structure from the constitutive equations, a \emph{geometric} or \emph{structure-preserving discretization} consists of two steps:
\begin{itemize}
	\item Finite-dimensional approximation of the underlying Stokes-Dirac structure. The duality between the power variables (their duality product has the interpretation of a power) must be mapped onto the finite-dimensional approximation. This requires a \emph{mixed} approach with different approximation spaces for each group of dual power variables (called flows and efforts). The subspace of the approximated, discrete (in space) power variables on which the preserved power-continuity holds, defines a \emph{Dirac structure} as a finite-dimensional counterpart of the Stokes-Dirac structure.
	\item  Consistent discretization of the constitutive equations in the previously chosen approximation spaces, which gives rise to the definition of a \emph{discrete Hamiltonian}.
\end{itemize} 
A geometric or structure-preserving discretization is, hence, a \emph{compatible} discretization as defined in \cite{bochev2006principles}: ``\emph{Compatible discretizations transform partial differential equations to discrete algebraic problems that mimic fundamental properties of the continuum equations.}'' For PH systems, such a fundamental property is the power balance defined by the Dirac structure with respect to which the PH system is defined. The \emph{open} character of PH systems requires special attention to the treatment of the boundary port variables, in particular the boundary inputs which are \emph{imposed} as boundary conditions. The \emph{simplicial} discretization \cite{seslija2012discrete}, \cite{seslija2014explicit} of PH systems based on \emph{discrete exterior calculus} (see e.\,g. \cite{desbrun2008discrete}) can be considered a \emph{direct discrete} formulation of the conservation laws, which, in conjunction with the consistent approximation of the closure equations, is such a compatible discretization. \cite{kotyczka2017discrete} addresses a generalized distribution of boundary inputs on dual meshes, a revised interpretation of the resulting state space models, and the consistent numerical approximation of nonlinear closure equations. Recently, the very related discretization on staggered grids has been reported using finite volumes \cite{kotyczka2016finite} and finite differences \cite{trenchant2017structure}.

The first approach for a \emph{structure-preserving} discretization of PH systems in the spirit of \emph{mixed finite elements} has been proposed in \cite{golo2004hamiltonian},  see  \cite{baaiu2009structure} for its application to a \emph{diffusive} process. There, the Stokes-Dirac structure is, however, discretized in \emph{strong form} which produces restrictive compatibility conditions. In the 1D \emph{pseudo-spectral} method \cite{moulla2012pseudo}, the degeneracy of the discrete duality product is rectified by the definition of \emph{reduced effort variables} (see also the recent paper \cite{vu2017symplectic} for the application to plasma dynamics described by a parabolic PDE). In \cite{farle2013porta}, the Stokes-Dirac structure is reformulated, changing the role of state and co-state variable in one conservation law. The discrete power variables are immediately connected with a non-degenerate duality pairing, at the price of a metric-dependent interconnection structure. 

The \emph{weak} formulation as the basis for Galerkin numerical approximations, including the different variations of the finite element method (see \cite{quarteroni1994numerical}, to cite only one textbook), has been only rarely used for modeling and discretization of PH systems: In \cite{farle2013porta}, one of the two conservation laws is written in weak form. \cite{altmann2017port} presents the PH model of the reactive 1D Navier-Stokes equations in weak form. In \cite{cardoso2017port}, the inclusion of a piezo patch on a flexible beam in the PH model, and the structure-preserving discretization are performed via the weak form.

In this article, we present the geometric discretization of distributed parameter PH systems based on the \emph{weak formulation} of the underlying Stokes-Dirac structure. Doing so, some limitations and restrictions of current approaches for PH systems can be overcome.
\begin{itemize}
    \item The strict separation of \emph{metric-independent structure} and \emph{constitutive equations} is maintained in our approach.
    \item Our formulation is valid for systems on spatial domains with \emph{arbitrary dimension}.
    \item Boundary inputs\footnote{In-domain inputs can be treated identically.} are imposed weakly, i.\,e. they \emph{appear directly} in the weak formulation of the Stokes-Dirac structure and the finite-dimensional approximation.
    \item The power-preserving maps for the discrete power variables offer design degrees of freedom to parametrize the resulting finite-dimensional PH state space models. They can be used to realize \emph{upwinding}.
    \item Mapping the flow variables instead of the efforts \emph{avoids a structural artificial feedthrough}, which is not desirable for the approximation of hyperbolic systems.
\end{itemize}

We consider as the prototypical example of distributed parameter PH systems, an \emph{open system of two hyperbolic conservation laws in canonical form}, as presented in \cite{schaft2002hamiltonian}. We use the language of differential forms, see e.\,g. \cite{flanders1963differential}, which highlights the geometric nature of each variable and allows for a unifying representation independent from the dimension of the spatial domain. 

An important reason for expressing the spatial discretization of PH systems based on the weak form is to make the link with modern geometric discretization methods. Bossavit's work in computational electromagnetism \cite{bossavit1991differential}, \cite{bossavit1998computational} and Tonti's cell method \cite{tonti2001direct} keep track of the geometric nature of the system variables which allows for a direct interpretation of the discrete variables in terms of \emph{integral} system quantities. This integral point of view is also adopted in \emph{discrete exterior calculus} \cite{desbrun2008discrete}. \emph{Finite element exterior calculus} \cite{arnold2010finite} gives a theoretical frame to describe functional spaces of differential forms and their compatible approximations, which includes the construction of higher order approximation bases that generalize the famous \emph{Whitney} forms \cite{whitney1957geometric}, see also \cite{rapetti2009whitney}. We refer also to the recent article \cite{hiemstra2014high} which proposes \emph{conforming} polynomial approximation bases, in which the conservation laws are \emph{exactly} satisfied, and which gives an excellent introduction to the geometric discretization. Impressing examples for the use of geometric discretization methods can be found in weather prediction \cite{cotter2012mixed} or in the simulation of large-scale fluid flows \cite{cotter2014finite}, where the conservation of potential vorticity plays an important role. Another important aspect of using the weak form as basis for structure-preserving discretization is to make the link with well-known numerical methods and to pave the way for a simulation of PH systems with existing numerical tools like FreeFEM++ \cite{hecht2012new}, GetDP \cite{geuzaine2008getdp} or FEniCS \cite{alnaes2015fenics}. 

The paper is structured as follows. In Section 2, we give a quick introduction to functional spaces of differential forms and we review the definition of distributed parameter PH systems based on the underlying Stokes-Dirac structure. Following the definition of boundary ports with alternating causality, we propose the \emph{weak form of the Stokes-Dirac structure}. Section 3 deals with the \emph{mixed Galerkin approximation} of this Stokes-Dirac structure. Due to the different geometric nature of the power variables and their approximation spaces, the discrete power balance involves \emph{degenerate} duality pairings. We define \emph{minimal} discrete power variables (pairs of bond variables) with \emph{non-degenerate} duality products by \emph{power-preserving mappings}. The so-defined subspace of the bond space is a \emph{Dirac structure} which admits different representations. The \emph{explicit} input-output representation, together with the finite-dimensional approximation of the Hamiltonian, leads to the desired PH approximate models in state space form. Section 4 illustrates the approach using Whitney finite elements on a 2D simplicial grid. We highlight the interpretation of the finite-dimensional state and power variables in terms of integral quantities on the grid and illustrate how the approximation quality can be tuned by the mapping parameters with a 2D simulation study. We compare the 1D eigenvalue approximation with the method of \cite{golo2004hamiltonian}. Certain parameter choices can be interpreted in terms of \emph{upwinding}, which is particularly favorable for hyperbolic systems. Section \ref{sec:conclusions} closes the paper with a summary and an outlook to ongoing and future work.

\section{Weak form for port-Hamiltonian systems of conservation laws}

\subsection{Differential forms and functional spaces}
To make the remainder of the paper self-contained, we give a compact introduction to the calculus with differential forms and their functional spaces. For further reading we refer to \cite{flanders1963differential}, \cite{arnold1989mathematical} and the paper \cite{arnold2010finite} with its numerous references. The calculus with differential forms, or \emph{exterior differential calculus} is widely used in the simulation of Maxwell's equations \cite{bossavit1998computational}, to give one example. \emph{Discrete exterior calculus} \cite{desbrun2008discrete} extends the formalism to discrete geometric objects defined on oriented meshes, and \emph{finite element exterior calculus} \cite{arnold2010finite} sets the framework for numerical approximation using finite element spaces of differential forms \cite{arnold2013spaces}.

\subsubsection{Smooth differential forms} We represent distributed parameter PH systems in the language of \emph{differential forms}, see e.\,g. \cite{flanders1963differential} for a comprehensive introduction to \emph{smooth} differential forms, i.\,e. differential forms with sufficiently differentiable (in the classical sense) coefficient functions. Let $\Omega$ be an open, bounded and connected $n$-dimensional spatial domain with Lipschitz boundary $\partial \Omega$ and denote $\Lambda^k(\Omega)$ the space of smooth differential $k$-forms on $\Omega$. For a smooth $n-1$-form $\omega \in \Lambda^{n-1}(\Omega)$, the continuous extension to the boundary is denoted $\mathrm{tr}\, \omega \in \Lambda^{n-1}(\partial \Omega)$. The symbol $\mathrm{tr}$ stems from the \emph{trace map}, which defines the extension to the boundary for Lebesgue integrable functions (see further below). The \emph{exterior derivative} $\mathrm d: \Lambda^k(\Omega) \rightarrow \Lambda^{k+1}(\Omega)$ represents, depending on the degree $k$, the different differential operators from vector calculus. The wedge product $\wedge: \Lambda^k(\Omega) \times \Lambda^l(\Omega) \rightarrow \Lambda^{k+l}(\Omega)$ is a skew-symmetric exterior product of differential forms. We will make frequent use of the following three formulas\footnote{See e.\,g. \cite{flanders1963differential}, Sections 2.3 and 3.2 for the first formulas. For Stokes' theorem, see e.\,g. \cite{flanders1963differential}, Section 5.8 or \cite{arnold1989mathematical}, Section 36.D, formulated for $\Omega$ a $n$-chain, i.\,e. a formal sum of $n$-simplices on a manifold $M \supset \Omega$.} for $\lambda \in \Lambda^k(\Omega)$, $\mu\in \Lambda^l(\Omega)$, and $\omega\in \Lambda^{n-1}(\Omega)$:
\begin{alignat}{2}
	\label{c04-eq:skew-symmetry-wedge-product}
	\lambda \wedge \mu &= (-1)^{kl} \mu \wedge \lambda 
    && \text{(Skew-symmetry of $\wedge$)}\\
	\label{c04-eq:ext-diff-product}
	\textrm d( \lambda \wedge  \mu ) &=
	\textrm d \lambda \wedge \mu  + (-1)^{k} \lambda \wedge \textrm d \mu \qquad
    &&  \text{(Product rule for $\d$)}\\
	\label{c04-eq:stokes-theorem}
	\int_{\Omega} \mathrm d \omega &= \int_{\partial \Omega} \mathrm{tr}\,\omega
    && \text{(Stokes' theorem)}
\end{alignat}
A natural pairing or \emph{duality product} between two differential forms $\lambda \in \Lambda^k(\Omega)$ and $\mu \in \Lambda^{n-k}(\Omega)$ on $\Omega$  is given by
\begin{equation}
    \label{c04-eq:notation-duality-product}
    \langle \lambda | \mu \rangle_{\Omega} := \int_{\Omega} \lambda \wedge \mu.
\end{equation}
Accordingly for $\partial \Omega$, see \cite{schaft2002hamiltonian}, Eq. (5). The generalized Stokes' theorem \eqref{c04-eq:stokes-theorem}, together with the product rule \eqref{c04-eq:ext-diff-product} and the short notation  \eqref{c04-eq:notation-duality-product}, gives the \emph{integration-by-parts} formula for smooth differential forms $\lambda \in \Lambda^k(\Omega)$ and $\mu \in \Lambda^{n-k-1}(\Omega)$,
\begin{equation}
    \label{c04-eq:integration-by-parts}
    \langle \mathrm d\lambda | \mu \rangle_{\Omega} =
    \langle \mathrm {tr}\, \lambda | \mathrm{tr}\, \mu \rangle_{\partial\Omega}
    - (-1)^k \langle \lambda | \mathrm d \mu \rangle_{\Omega}. 
\end{equation}

\subsubsection{Lebesgue and Sobolev spaces of differential forms} 
\label{subsubsec:lebesgue-sobolev}
We recall some important definitions and facts, which ensure that the formulas from the previous subsection make also sense on functional spaces of differential forms with weaker smoothness conditions. Section 4 of \cite{arnold2010finite} gives a quick and concise introduction into calculus with differential forms whose coefficient functions belong to Lebesgue spaces $L^p(\Omega)$ and Sobolev spaces, in particular $H^m(\Omega) = W^{m,2}(\Omega)$. The space $L^2 \Lambda^k(\Omega)$ of differential forms with square integrable coefficient functions is equipped with the inner product\footnote{To define the inner product, we need a volume form. For $\Omega \subset \IR^n$, we take $d\text{vol} = d^nz$ as in \cite{holm2011geometric}, Definition 3.6.2.}
\begin{equation}
    \langle \alpha, \beta \rangle_{L^2\Lambda^k(\Omega)} := 
    \int_\Omega \sum_{i=1}^n \alpha_i(z) \beta_i(z) \,d\text{vol},
\end{equation}
where $\alpha_i, \beta_i \in L^2(\Omega)$, $i=1,\ldots,n$ are the component functions of $\alpha, \beta \in L^2 \Lambda^k(\Omega)$. The \emph{weak exterior derivative} $\mathrm d \lambda$ of $\lambda \in \Lambda^k(\Omega)$ can be defined via the integration-by-parts formula \eqref{c04-eq:integration-by-parts}, with smooth differential forms $\mu$ that vanish on the boundary (due to their compact support in $\Omega$):
\begin{equation}
    \label{eq:weak-exterior-derivative}
     \langle \mathrm d\lambda | \mu \rangle_{\Omega} =
    - (-1)^k \langle \lambda | \mathrm d \mu \rangle_{\Omega} \qquad \forall \mu \in C_c^\infty \Lambda^{n-k-1}(\Omega).
\end{equation}
We do not introduce a new symbol, as we will understand $\mathrm d$ in this weak sense in the rest of the paper. This allows to apply the exterior derivative to differential forms whose coefficient functions are not differentiable in the classical sense. The Sobolev spaces $H^m \Lambda^k(\Omega)$ contain the differential forms on $\Omega$ with $L^2$ weak derivatives up to order $m$. The corresponding inner product for $m=1$ is defined as
\begin{equation}
    \langle \alpha , \beta \rangle_{H^1 \Lambda^k(\Omega)} :=
    \langle \alpha , \beta \rangle_{L^2 \Lambda^k(\Omega)} +
    \langle \mathrm d \alpha , \mathrm d \beta \rangle_{L^2 \Lambda^{k+1}(\Omega)}.
\end{equation}
As we deal with \emph{boundary control systems}, we are particularly interested in the extension of certain differential forms to the boundary. Fortunately, the \emph{trace theorem} from classical functional analysis\footnote{See e.\,g. \cite{brezis2011functional}, Section 9.8, paragraphs 6 and 7 for fractional Sobolev spaces and the trace theorem.} extends to differential forms as discussed in Section 4 of \cite{arnold2010finite}. We will make heavy use of the implication
\begin{equation}
    \lambda \in H^1 \Lambda^k(\Omega) \quad \Rightarrow \quad \mathrm{tr}\, \lambda \;\in\; H^{1/2} \Lambda^k(\partial \Omega) \;\subset\; L^{2} \Lambda^k(\partial \Omega).
\end{equation}
Where convenient for compactness, we use the common abusive notation $\int_{\partial \Omega} \omega = \int_{\partial\Omega} \mathrm tr\, \omega$ for the extension of $\omega \in H^m \Lambda^{n-1}(\Omega)$, $m\geq1$ to the boundary.

\subsection{Distributed parameter port-Hamiltonian systems}
We consider systems of two conservation laws in a \emph{canonical form}\footnote{Or systems of two conservation laws with \emph{canonical interdomain coupling}.} as introduced in \cite{schaft2002hamiltonian}. These systems share a common linear geometric structure that relates their \emph{power variables}, i.\,e. the pairs of physical quantities that constitute their \emph{power balance} equation.

\subsubsection{The Stokes-Dirac structure}
The canonical structure, defined on the open, connected $n$-dimensional domain $\Omega$ with Lipschitz boundary $\partial \Omega$, is expressed, on the one hand, by
\begin{equation}
	\label{eq:stokes-dirac-interconnection}
	\bmat{f^p\\ f^q}
		=
    \underbrace{
	\bmat{0 & (-1)^r \mathrm{d}\\ \mathrm{d} & 0} 
    }_{\mathcal J}
    \bmat{e^p\\ e^q},
\end{equation}
with the \emph{flow} differential forms $f^p \in L^2 \Lambda^p(\Omega)$, $f^q \in L^2 \Lambda^q(\Omega)$, and the \emph{effort} differential forms $e^p \in H^1 \Lambda^{n-p}(\Omega)$, $e^q \in H^1 \Lambda^{n-q}(\Omega)$. The degrees $p$ and $q$ of the differential forms satisfy $p+q=n+1$ and the exponent $r = pq + 1$ ensures the formal skew-symmetry\footnote{A \emph{formal} differential operator $\mathcal J$ is defined \emph{without} boundary conditions (see e.\,g. \cite{kato1995perturbation}, Sect. III.3). Formal skew-symmetry is verified by $\langle \vec e, \mathcal J \vec e\rangle = - \langle \mathcal J \vec e, \vec e \rangle$ under \emph{zero} boundary conditions, where $\langle \cdot, \cdot \rangle$ is the inner product on the appropriate functional space.} of the matrix-valued differential operator $\mathcal J$ for arbitrary dimension $n$, see \cite{schaft2002hamiltonian}. On the other hand, the extensions of the efforts to the boundary define the \emph{boundary port variables}
\begin{equation}
	\label{eq:stokes-dirac-boundary-ports}
	\bmat{f^\partial\\ e^\partial }
		=
	\bmat{\mathrm{tr} & 0\\ 0 & (-1)^p \mathrm{tr}} 
    \bmat{ e^p\\  e^q},
\end{equation}
$f^\partial \in L^2 \Lambda^{n-p}(\partial \Omega)$, $e^\partial \in L^2 \Lambda^{n-q}(\partial \Omega)$. Note that here we repeat the case of \cite{schaft2002hamiltonian} with a single \emph{causality}. The term causality describes which of the boundary port variables is imposed as an \emph{input} boundary condition in the sense of automatic control. Moreover, the definition of boundary flows and efforts is not unique (see \cite{legorrec2005dirac} for a complete characterization). As shown in \cite{schaft2002hamiltonian}, the pairs of flow and effort variables that satisfy \eqref{eq:stokes-dirac-interconnection}, \eqref{eq:stokes-dirac-boundary-ports}, define a linear subspace of the \emph{bond space}\footnote{As a reference to bond graph modeling of dynamical systems \cite{paynter1961analysis}, see also \cite{duindam2009modeling}, Chapter 1.} $\mathcal F \times \mathcal E$, 
\begin{equation}
	\label{c04-eq:spaces-F-E}
	\begin{split}
		\mathcal F &= L^2\Lambda^p(\Omega) \times L^2\Lambda^{q}(\Omega) \times L^2\Lambda^{n-p}(\partial \Omega),\\
		\mathcal E &= H^1\Lambda^{n-p}(\Omega) \times H^1\Lambda^{n-q}(\Omega) \times L^2\Lambda^{n-q}(\partial \Omega),
	\end{split}
\end{equation}
on which the \emph{power balance} equation 
\begin{equation}
	\label{c04-eq:power-continuity}
    \langle e^p | f^p \rangle_{\Omega} + \langle e^q | f^q \rangle_{\Omega}
    + 
    \langle e^\partial | f^\partial \rangle_{\partial \Omega}
    = 0
\end{equation}
holds. In addition, this subspace is \emph{maximally isotropic} with respect to the symmetrized duality pairing which is represented by the left hand terms of \eqref{c04-eq:power-continuity}. For details on this linear subspace called a \emph{Stokes-Dirac structure}, we refer to \cite{schaft2002hamiltonian}. It essentially generalizes the notion of a \emph{Dirac structure} to the distributed parameter case by exploiting \emph{Stokes'} theorem.

A Dirac structure, whose definition and characterization are summarized below, can be considered as ``\emph{the geometrical notion formalizing general power-conserving interconnections}'' \cite{schaft2002hamiltonian}.

\begin{dfn}[\cite{courant1990dirac}, Definition 1.1.1]
    \label{c04-def:fin-dim-dirac-structure}
	Given the finite-dimensional linear space $V$ over $\IR$ or another field and its dual $V^*$ with respect to the duality pairing $\langle \cdot | \cdot \rangle: V \times V^*\rightarrow \IR$. Define the symmetric bilinear form 
	\begin{equation}
		\langle\langle (\vec f_1, \vec e_1), (\vec f_2, \vec e_2) \rangle \rangle := \frac{1}{2} \left( \langle \vec e_1 | \vec f_2 \rangle + \langle \vec e_2 | \vec f_1 \rangle \right), \qquad (\vec f_i, \vec e_i) \in V \times V^*, \; i=1,2.
	\end{equation}
	A \emph{Dirac structure} is a linear subspace $D \subset  V \times V^*$ which is \emph{maximally isotropic} under $\langle\langle \cdot, \cdot \rangle\rangle$.
\end{dfn}

Equivalently, a Dirac structure can be characterized as the subspace $D \subset  V \times V^*$ which equals its orthogonal complement with respect to $\langle\langle \cdot, \cdot \rangle\rangle$: $D = D^\perp$, see \cite{schaft2002hamiltonian}, Definition 2.1. $D$ is isotropic under $\langle\langle \cdot, \cdot \rangle\rangle$, if $\langle\langle (\vec f_1,\vec e_1), (\vec f_2,\vec e_2) \rangle\rangle = 0$ for all $(\vec f_1, \vec e_1), (\vec f_2, \vec e_2) \in D$, from which $D\subset D^\perp$ follows. If, in addition, for every $(\vec f_1, \vec e_1) \in D$ there exists \emph{no} $(\vec f_3, \vec e_3) \notin D$ such that $\langle\langle (\vec f_1,\vec e_1), (\vec f_3,\vec e_3) \rangle\rangle = 0$, then $D$ is \emph{maximally} isotropic, and also $D^\perp \subset D$ is true, which implies $D = D^\perp$. The isotropy condition implies that 
\begin{equation}
	\langle\langle (\vec f,\vec e), (\vec f,\vec e) \rangle\rangle = \langle \vec e|\vec f \rangle = 0 \qquad \forall\; (\vec f, \vec e) \in V \times V^*.
\end{equation}
If $V$ and $V^*$ are spaces of conjugated power variables, this is indeed a power balance equation. For more details and the different representations of finite-dimensional Dirac structures (in the PH context), we refer to  \cite{schaft2000l2}, \cite{schaft2002hamiltonian}. For Dirac structures defined on Hilbert spaces, and their composition, see e.\,g. Chapter 5 of \cite{golo2002interconnection} and \cite{kurula2010dirac}.

\subsubsection{Canonical PH systems of two conservation laws}
To define a port-Hamiltonian distributed parameter system, the Stokes-Dirac structure is completed by \emph{dynamic equations} that introduce evolution with respect to time, and \emph{constitutive relations}, which define the \emph{nature} of the resulting dynamic system of PDEs. We focus on PH systems based on the \emph{canonical} differential operator $\mathcal J$ as indicated in \eqref{eq:stokes-dirac-interconnection}. Moreover, we derive the constitutive equations for the effort variables from a single energy (Hamiltonian) functional. This results in a \emph{hyperbolic} system of conservation laws in PH form.

The flows induce the time evolution of the distributed \emph{state variables}\footnote{We use the same symbols for the state variables (as differential forms) and their degrees, which should in general not provoke any confusion. In this paragraph, we explicitly indicate the arguments $(z,t)$, for the Hamiltonian can depend on $z$ as in the case of the shallow water equations with variable bed profile. In the sequel, we will omit the arguments.} $p(z,t) \in L^2 \Lambda^p(\Omega)$, $q(z,t) \in L^2\Lambda^q(\Omega)$ with corresponding initial conditions:
\begin{equation}
	\label{eq:phs-flows}
    \bmat{- \partial_t p(z,t)\\ - \partial_t q(z,t)} =
    \bmat{f^p(z,t)\\ f^q(z,t)},
    \qquad
    \bmat{p(z,0)\\q(z,0)} =
    \bmat{p_0(z)\\ q_0(z)}.
\end{equation}
The closure or constitutive equations relate the state and co-state (or co-energy or effort) variables according to
\begin{equation}
    \label{eq:phs-efforts}
    \bmat{e^p(z,t)\\ e^q(z,t)} =
    \bmat{\delta_p H(p(z,t), q(z,t))\\ \delta_q H(p(z,t), q(z,t))},
\end{equation}
where the right hand side contains the \emph{variational derivatives} of the \emph{Hamiltonian} or \emph{energy} functional
\begin{equation}
	\label{c04-eq:energy-functional}
	H(p(z,t),q(z,t)) = \int_\Omega \mathcal H(p(z,t),q(z,t),z)
\end{equation}
with the \emph{Hamiltonian density} $n-$form $\mathcal H$. The variational derivatives are the unique differential $n-p$-form $\delta_p H$ and $n-q$-form $\delta_q H$ that satisfy\footnote{See e.\,g. \cite{duindam2009modeling}, p. 232.} 
\begin{equation}
	\label{c04-eq:variational-derivative}
	\begin{split}
	H(p+\delta p, q+\delta q) = \int_{\Omega} \mathcal H(p, q, z) + \int_{\Omega} \delta_p H \wedge \delta p + \delta_q H \wedge \delta q \;+\; o(\delta p, \delta q).
	\end{split}
\end{equation}

\begin{dfn}[\cite{schaft2002hamiltonian}, Definition 2.2]
We call
\begin{equation}
	\label{c04-eq:canonical-phs-diff-eq}
	\bmat{- \partial_t p\\ - \partial_t q}
		=
	\bmat{0 & (-1)^r \mathrm{d}\\ \mathrm{d} & 0} 
    \bmat{\delta_p H\\ \delta_q H},
    \qquad
	\bmat{f^\partial\\ e^\partial}
		=
	\bmat{\mathrm{tr} & 0\\ 0 & (-1)^p \mathrm{tr}} 
    \bmat{\delta_p H\\ \delta_q H}
\end{equation}
a \emph{distributed parameter port-Hamiltonian system} on the $n$-dimensional spatial manifold $\Omega$.
\end{dfn}

Imposing the port variables $f^\partial$ and/or $e^\partial$ on a subset of $\partial \Omega$ as \emph{control input} (and understanding the remaining boundary port variables as \emph{observation} or \emph{output}), makes the system representation \eqref{c04-eq:canonical-phs-diff-eq} a \emph{boundary control system} in the sense of \cite{fattorini1968boundary}. For 1D linear PH systems with a \emph{generalized} skew-symmetric system operator, \cite{legorrec2005dirac} gives conditions on the assignment of boundary in- and outputs for the system operator to generate a contraction semigroup. The latter is instrumental to show \emph{well-posedness} of a linear PH system, see \cite{jacob2012linear}. Essentially, \emph{at most half the number of boundary port variables} can be imposed as control inputs for a well-posed PH system in 1D.

Taking $\delta p = \dot p$, $\delta q = \dot q$ as variations in \eqref{c04-eq:variational-derivative}, and omitting the higher order terms, the time derivative of the energy functional \eqref{c04-eq:energy-functional} reads
\begin{equation}
	\label{c04-eq:energy-balance-distributed}
	\begin{split}
	\dot H =
	\int_{\Omega}
	\delta_p H \wedge \dot p +  \delta_q H \wedge \dot q
    = \langle \delta_p H | \dot p \rangle_{\Omega} + 
    \langle \delta_q H | \dot q \rangle_{\Omega}.
	\end{split}
\end{equation}
Replacing $\dot p$, $\dot q$ according to \eqref{c04-eq:canonical-phs-diff-eq} and using the integration-by-parts formula \eqref{c04-eq:integration-by-parts} yields
\begin{equation}
	\label{eq:2-energy-balance-boundary}
	\dot H = \int_{\partial \Omega} (-1)^p \left. \delta_q H \right|_{\partial \Omega} \wedge \left. \delta_p H \right|_{\partial \Omega} = (-1)^p \langle \delta_q H | \delta_p H  \rangle_{\partial\Omega}.
\end{equation}
Equating the right hand sides of the last two equations gives, together with the definition of boundary port variables in \eqref{c04-eq:canonical-phs-diff-eq}, the power balance equation 
\begin{equation}
    \label{eq:2-power-balance-with-explanation}
	\underbrace{\langle \delta_p H | -\dot p \rangle_{\Omega} + 
    \langle \delta_q H | -\dot q \rangle_{\Omega}}_{\substack{\textrm{power extracted from}\\\textrm{distributed storage}}} \quad + \quad 
    \underbrace{ \langle e^\partial | f^\partial \rangle_{\partial \Omega} }_{\substack{\textrm{power supplied}\\ \textrm{over the boundary}}} 
    \quad = \quad 0,
\end{equation}
which is a purely \emph{structural property}, as it follows directly from \eqref{c04-eq:power-continuity} and the definitions of distributed and boundary flows and efforts.

\begin{rmk}
Defining the \emph{flux functions}
\begin{equation}
    \bmat{\beta^p\\ \beta^q} = 
    \bmat{0 & (-1)^r \mathrm{d}\\ \mathrm{d} & 0} 
    \bmat{\delta_p H\\ \delta_q H},
\end{equation}
it is evident that \eqref{c04-eq:canonical-phs-diff-eq} represents a \emph{hyperbolic} system of two conservation laws. Note that we explicitly defined boundary port variables whose pairing describes a power flow over the system boundary. We therefore deal with \emph{open} systems of conservation laws.
\end{rmk}

\begin{rmk}
    For the \emph{same} Stokes-Dirac structure, PDE systems of different nature are obtained when flows and efforts are defined based on different dynamics and closure equations. For a quadratic Hamiltonian density $\mathcal H$ in $p$ and $q$, the resulting \emph{hyperbolic} PH system is linear, otherwise nonlinear. The linear case is treated e.\,g. in \cite{jacob2012linear}, where $\mathcal H$ is bounded and non-negative, and $H$ serves as the energy norm on the corresponding Hilbert space. For different definitions of flows and efforts, in particular if both efforts are not derived from the same functional, the resulting PDE system becomes \emph{parabolic}, see e.\,g. \cite{zwart2016building}, which allows to represent diffusive phenomena with the same Stokes-Dirac structure, see e.\,g. the heat conduction example in \cite{duindam2009modeling}, Section 4.2.2, or \cite{baaiu2009port}.
\end{rmk}

\begin{rmk}
    The division of the system variables into \emph{flows} (i.\,e. time derivatives of \emph{states}) and \emph{efforts} (or \emph{co-states}) stems from the \emph{duality} arizing from the variational formula \eqref{c04-eq:variational-derivative}, see also \eqref{c04-eq:energy-balance-distributed}. It takes into account their different geometric definition, such as the degree of the differential forms. Tonti, for example, distinguishes between \emph{configuration} and \emph{source} variables \cite{tonti2001direct}, which are \emph{states} and \emph{efforts} in our language. His \emph{energy variables} are products of these dual quantities, whereas in our context, we build the \emph{duality products} between flows and efforts in order to compute \emph{powers}. The space of dual power variables contains pairs of \emph{in- and output variables} (denoted boundary efforts and flows), which describe the energy flow over the system boundary and make the PH representation inherently \emph{control oriented}.  A central feature of PH modelling and control is the separation of the linear relations between the power variables -- described by a (Stokes-)Dirac structure -- from the constitutive and dynamics equations. This separation shall be maintained under \emph{structure-preserving} discretization.
\end{rmk}

\subsubsection{Examples}
For illustration, we give two examples for systems of two conservation laws that share the same Stokes-Dirac structure and can be written as PH distributed parameter systems. In the second example, we highlight the relations of the representations in terms of vector calculus and differential forms.

\begin{exm}[1D transmission line]
    \label{exa:1D-transmission-line}
    The simplest 1D example of a system of two conservation laws is an electric transmission line (the ``Telegrapher's equations'') with the spatial coordinate $z \in \Omega = (0, L)$, see e.\,g. \cite{golo2004hamiltonian}. With $p(z) = \psi(z) \in \Lambda^1(\Omega)$, the magnetic flux density one-form, $q(z) \in \Lambda^1(\Omega)$, the electric charge density one-form, $l(z) dz, c(z) dz \in \Lambda^1(\Omega)$ the distributed inductance and capacitance per length ($l(z)$ and $c(z)$ are smooth functions and $dz$ the basis one-form), the Hamiltonian density one-form is $\mathcal H(p,q) = \frac{1}{2} \left( p(z) \wedge \ast \frac{p(z)}{l(z)}+ q(z) \wedge \ast \frac{q(z)}{c(z)} \right)$. The \emph{Hodge star} operator $\ast: \Lambda^k(\Omega) \rightarrow \Lambda^{n-k}(\Omega)$ renders in the 1D case a one-form a zero-form and \emph{vice versa}\footnote{The \emph{Hodge star} induces an \emph{inner product} on the space of differential forms on a manifold $\Omega$ by  
    $
        (\alpha, \beta) := 
        \langle \alpha | \ast \beta \rangle_{\Omega} = \langle \beta | \ast \alpha \rangle_{\Omega} = (\beta, \alpha)$,
    $\alpha, \beta \in \Lambda^k(\Omega)$,
    see Section 8.4 of \cite{flanders1963differential} or Section 3.6 of \cite{holm2011geometric}. The inner product is not necessarily the standard $L^2$ norm, but may be equipped with another metric, see e.\,g. the energy norm for linear PH systems \cite{jacob2012linear}. The Hodge star is, hence, \emph{metric} dependent. A given inner product space induces a corresponding Hodge star.}. The variational derivatives of the Hamiltonian $H = \int_0^L \mathcal H$ are the current and the voltage along the line, $e^p(z) = \delta_p H = \ast \frac{p(z)}{l(z)} = i(z) \in \Lambda^0(\Omega)$ and $e^q(z) = \delta_q H = \ast \frac{q(z)}{c(z)} = v(z) \in \Lambda^0(\Omega)$. Note that in the 1D case, the \emph{disconnected} nature of the boundary with opposite orientation of its two parts requires to modify the definition of boundary port variables according to \eqref{c04-eq:canonical-phs-diff-eq}. With the boundary flow and effort \emph{vectors}
    \begin{equation}
        \label{eq:boundary-flows-efforts-1D}
        \vec f^\partial = \bmat{ e^p(0)\\ e^q(L)}, \quad
        \vec e^\partial = \bmat{ e^q(0)\\ -e^p(L)},
    \end{equation}
    the power transmitted over the boundary can be written as the standard inner product
    \begin{equation}
        \langle \vec e^\partial | \vec f^\partial \rangle = e^q(0) e^p(0) - e^q(L) e^p(L) = 
        - \langle e^q | e^p \rangle_{\partial \Omega}.
    \end{equation}
\end{exm}

\begin{exm}[2D shallow water equations]
	The shallow water equations describe the two-dimensional flow of an inviscid fluid with relatively low depth (``shallow''), which permits the averaging of the horizontal components of the velocity field and the omission of the vertical velocity component. The two equations that describe the conservation of mass and momentum over an infinitesimal, fixed surface element\footnote{Which corresponds to the \emph{Eulerian} representation of the fluid flow.} (we consider the fluid in a non-rotating system) can be written in vector calculus notation, with spatial coordinates $\vec z = \bmat{x & y}^T$, see e.\,g. \cite{fernandez2010coupling}, 
	\begin{equation}
		\begin{split}
			\partial_t  h + \mathrm{div}(h \vec u) &= 0, \\
			\partial_t (h \vec u) + \mathrm{div} (h \vec u \vec u) + \frac{1}{2} g \nabla h^2 + gh\nabla z_b &=0,
		\end{split}
	\end{equation}
	where $h$ denotes the water level over the bed, $z_b$ is the elevation of the bed profile, $\vec u = [u\;\; v]^T$ the 2-dimensional velocity field, $h \vec u = \vec F$ the discharge vector and $g$ the gravitational acceleration. $\vec u \cdot \vec u$ and $\vec u \vec u$ denote respectively the scalar and the tensor (dyadic) product of two vectors. With some rules of tensor calculus\footnote{See Appendix A.4 of \cite{bird2002transport}: $\nabla \cdot (s \vec I) = \nabla s$, $\nabla \cdot (\vec v \vec w) = \vec v \cdot \nabla \vec w + \vec w (\nabla \cdot \vec v)$, $\vec v \cdot \nabla \vec v = \frac{1}{2} \nabla (\vec v \cdot \vec v) - \vec v \times (\nabla \times \vec v)$. The last term with cross product and rotation has to be evaluated based on the 3D velocity vector with zero vertical component.}, and replacing the continuity equation, the momentum equation can be reformulated in terms of $\vec u$ and we obtain
	\begin{equation}
		\bmat{\partial_t h\\ \partial_t {\vec u} + q \vec F^\perp}
		=
		\bmat{0 & - \mathrm{div}\\ - \mathrm{grad} & 0}
		\bmat{\frac{1}{2} \vec u \cdot \vec u + g h + g z_b\\ h \vec u},
	\end{equation}
    where $q = \frac{1}{h} (\partial_x v - \partial_y u)$ denotes the \emph{potential vorticity}\footnote{The potential vorticity satisfies the balance equation $\partial_t q + \vec u \cdot \nabla q = 0$, i.\,e. it is advected with the fluid flow see e.\,g. \cite{arakawa1981potential}. It plays an important role in the long-time numerical simulation of large scale flow problems, see e.\,g. \cite{ringler2010unified}.}, and $\vec F^\perp = [hv\;\; -hu]^T$. The term $q \vec F^\perp$ represents the acceleration of the fluid due to the rotation of the flow. It stems from the rotational part of the transport term in the momentum equation. The total energy (per unit mass) is
    \begin{equation}
        H = \int_{\Omega} \frac{1}{2} h \vec u \cdot \vec u + \frac{1}{2} g h^2 + g h z_b \, d\vec z.
    \end{equation}
    To rewrite the equations in terms of differential forms, we use the relations, see e.\,g. \cite{abraham2012manifolds},\footnote{Index raising ($\sharp$) produces a vector field with the same components from a one-form. Index lowering ($\flat$) produces a one-form with identical components from a vector field. Raising and lowering in these \emph{musical isomorphisms} refers to the fact that upper (lower) indices are typically used for the components of vector fields (one-forms).} 
	\begin{equation}
		\nabla f = (\d f)^\sharp, 	
		\quad
		\mathrm {div}\, \vec f = \ast \d (\ast \vec f^\flat).
	\end{equation}
	Taking into account that $\ast \ast \lambda = (-1)^{k(n-k)} \lambda$ for a $k$-form $\lambda$, we obtain 
	\begin{equation}
        \label{c04-eq:swe-ode-1}
		\bmat{- \partial_t (\ast h)\\ - (\partial_t \vec u + q \vec F^\perp )^\flat}
		=
		\bmat{0 &  - \d \\  \d & 0}
		\bmat{p_{dyn}\\ - (\ast \vec F^\flat)},
	\end{equation}
    where $\ast h \in \Lambda^2(\Omega)$ and $\vec u^\flat \in \Lambda^1(\Omega)$ are the $2$-form and $1$-form associated with the water depth and the flow velocity ($p=2$, $q=1$). $p_{dyn} = \frac{1}{2} \vec u \cdot \vec u + g h + g z_b \in \Lambda^0(\Omega)$ is the hydrodynamic pressure function ($0$-form) and $\ast \vec F^\flat \in \Lambda^1(\Omega)$ is the $1$-form associated to the discharge per unit width. 
    Indeed the vector on the right can be expressed in terms of the variational derivatives $p_{dyn} = \delta_{\ast h} H$ and $-(\ast \vec F^\flat) = \delta_{\vec u^\flat} H$ of the Hamiltonian $H=\int_\Omega \mathcal H$  density $2$-form\footnote{In 2D we have $\ast dx = dy$, $\ast dy = - dx$.} $\mathcal H = \frac{1}{2} h ( \vec u^\flat \wedge \ast \vec u^\flat) + \frac{1}{2} g h \ast h  + g \ast h z_b$. If the rotational term $q \vec F^\perp$ can be neglected\footnote{If not, \eqref{c04-eq:swe-ode-1} still represents a PH system, as the rotational term does not contribute to the energy balance \cite{pasumarthy2008port}. It can be associated to the canonical Stokes-Dirac structure, with a different definition of the \emph{dynamic} equation for the $1$-form $\vec u^\flat$.}, \eqref{c04-eq:swe-ode-1} has the canonical structure  \eqref{c04-eq:canonical-phs-diff-eq}.
\end{exm}

\begin{rmk}
In this paper, we concentrate on \emph{canonical} systems of two conservation laws in \emph{arbitrary} spatial dimension. Beyond this basic class of PH systems (which however covers different linear and nonlinear physical phenomena), there exists a growing number of PH models for different physical phenomena, see e.\,g. \cite{vu2012port} for the modeling of the plasma in a fusion reactor, \cite{altmann2017port} for the reactive Navier-Stokes flow or \cite{zhou2017distributed} for irreversible thermodynamic systems to mention only a few interesting examples. In \cite{polner2014hamiltonian}, a PH formulation of the compressible Euler equations in terms of density, weighted vorticity and dilatation is presented. 
The PH representation is not unique. An important approach for \emph{mechanical} systems is based on a jet bundle formulation \cite{schoeberl2014jet}.
\end{rmk}

\subsection{Boundary ports with alternating causality}
The boundary term $\langle e^\partial | f^\partial \rangle_{\partial \Omega}$ in \eqref{eq:2-power-balance-with-explanation} pairs two power variables, one of which is considered as \emph{control input} imposed on $\partial \Omega$. The other, \emph{dual} variable plays the role of the \emph{collocated} and \emph{power-conjugated} output. The assignment of these roles to the boundary power variables is referred to as \emph{causality of the boundary port}. This choice of boundary port variables to define a Stokes-Dirac structure (an infinite-dimensional PH system) is not unique, see \cite{legorrec2005dirac} for the 1D case, nor must it be homogeneous on $\partial \Omega$. On parts of the boundary, $e^q = \delta_q H$ may define the control input, while this role may be assigned to $e^p = \delta_q H$ on the rest of it. The only constraint on the definition of pairs of boundary port variables is that their product accounts for the power flow over the whole boundary as in \eqref{eq:2-power-balance-with-explanation}. Equation \eqref{eq:2-energy-balance-boundary} may be interpreted as the balance equation for the Hamiltonian functional $H$. For a positive definite (or at least non-negative) \emph{storage} functional $H$, it immediately shows \emph{passivity}\footnote{Passivity is defined in complete analogy to the finite-dimensional case, see e.\,g. \cite{byrnes1991passivity}, Definition 2.4.} of the PH state representation.

In order to represent a larger class of boundary control problems for systems of two conservation laws, the following proposition generalizes the definition of the Stokes-Dirac structure to the case with \emph{multiple} pairs of in- and outputs on $\partial\Omega$ with \emph{different causalities}.

\begin{prop}
    \label{c04-prop:stokes-dirac-general-boundary-ports}
	Given the $n$-dimensional open and connected domain $\Omega$ with Lipschitz boundary $\partial \Omega$. Consider a partition of $\partial \Omega$ with subsets $\Gamma_i \subset \partial\Omega$, $i = 1,\ldots,n_\Gamma$, and $\hat \Gamma_j \subset \partial\Omega$, $j = 1,\ldots,\hat n_\Gamma$, with orientation according to $\partial \Omega$. Let $\bigcup_{i=1}^{n_\Gamma} \Gamma_i \cup \bigcup_{j=1}^{\hat n_\Gamma} \hat \Gamma_j = \partial \Omega$ and the intersections $\Gamma_i \cap \hat{\Gamma}_j$ be sets of measure zero. Define the boundary flow and effort forms 
	\begin{equation}
		\label{c04-eq:stokes-dirac-mixed-boundary-efforts}
		\begin{array}{rrcl}
			f^{\Gamma}_i =&\!\! \left. \mathrm{tr}\, e^p \right|_{\Gamma_i} 
            ,\\ 
			e^{\Gamma}_i =&\!\! (-1)^p \left. \mathrm{tr}\,  e^q \right|_{\Gamma_i}   
            ,
		\end{array} \qquad
		\begin{array}{rrcl}
			\hat f^{\Gamma}_j =&\!\! (-1)^p \left. \mathrm{tr}\,  e^q \right|_{\hat \Gamma_j}
            ,\\
			\hat e^{\Gamma}_j =&\!\! \left. \mathrm{tr}\,  e^p \right|_{\hat \Gamma_j}             ,
		\end{array}
	\end{equation} 
    as extensions of the effort forms to the corresponding subsets of $\partial \Omega$. The bond space $\mathcal F \times \mathcal E$ is composed of\footnote{For brevity, the domains of the differential forms are written as subscripts, $\Lambda^p_\Omega = \Lambda^p(\Omega)$, etc.}
    \begin{equation}
		\label{c04-eq:spaces-F-E-split-boundary}
		\begin{array}{rcccccc}
			\mathcal F &\;=\;& 
				L^2\Lambda^p_\Omega \times
				L^2\Lambda^{q}_\Omega &\;\times\;&
				L^2\Lambda^{n-p}_{\Gamma_1} \times \cdots \times L^2\Lambda^{n-p}_{\Gamma_{n_\Gamma}} &\;\times\;&
				L^2\Lambda^{n-q}_{\hat \Gamma_1} \times \cdots \times
				L^2\Lambda^{n-q}_{\hat \Gamma_{\hat n_{\Gamma}}}\\
			\mathcal E &\;=\;& 
				H^1\Lambda^{n-p}_{\Omega}\times
				H^1\Lambda^{n-q}_{\Omega} &\;\times\;&
				L^2\Lambda^{n-q}_{\Gamma_1} \times \cdots \times L^2\Lambda^{n-q}_{\Gamma_{n_\Gamma}}&\;\times\;&
				L^2\Lambda^{n-p}_{\hat \Gamma_1} \times \cdots \times
				L^2\Lambda^{n-p}_{\hat \Gamma_{\hat n_{\Gamma}}}.
		\end{array}
	\end{equation}
	The subspace $D \subset \mathcal F \times \mathcal E$, on which \eqref{eq:stokes-dirac-interconnection}  holds and the boundary ports are defined by \eqref{c04-eq:stokes-dirac-mixed-boundary-efforts}, is a Dirac structure.
\end{prop}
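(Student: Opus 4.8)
The plan is to verify the defining property from Definition~\ref{c04-def:fin-dim-dirac-structure} in its equivalent form $D = D^\perp$ with respect to the symmetrized pairing $\langle\langle\cdot,\cdot\rangle\rangle$, splitting this into isotropy ($D \subset D^\perp$) and maximality ($D^\perp \subset D$). Since $D$ is a linear subspace, polarization reduces isotropy of the symmetric bilinear form to the vanishing of the quadratic form $\langle\langle(\vec f,\vec e),(\vec f,\vec e)\rangle\rangle = \langle \vec e | \vec f\rangle$ on $D$, i.e. to the power balance; so for the first inclusion only the quadratic case must be checked.

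For isotropy I would take $(\vec f, \vec e)\in D$ and expand $\langle \vec e | \vec f\rangle$ as the sum of the two interior products $\langle e^p | f^p\rangle_\Omega + \langle e^q | f^q\rangle_\Omega$ and the boundary products $\sum_{i=1}^{n_\Gamma}\langle e_i^\Gamma | f_i^\Gamma\rangle_{\Gamma_i} + \sum_{j=1}^{\hat n_\Gamma}\langle \hat e_j^\Gamma | \hat f_j^\Gamma\rangle_{\hat\Gamma_j}$. Substituting the interconnection relations $f^p = (-1)^r\d e^q$, $f^q = \d e^p$ from \eqref{eq:stokes-dirac-interconnection} into the interior terms and applying the integration-by-parts formula \eqref{c04-eq:integration-by-parts}, the interior contribution collapses to a single boundary integral over $\partial\Omega$, exactly the term of the power balance \eqref{c04-eq:power-continuity}. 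Because the $\Gamma_i$ and $\hat\Gamma_j$ partition $\partial\Omega$ up to sets of measure zero, boundary integrals are additive over them; and the sign factors $(-1)^p$ in the boundary definitions \eqref{c04-eq:stokes-dirac-mixed-boundary-efforts} are chosen precisely so that, on each piece, the product of the assigned flow and effort reproduces the same boundary power density regardless of the causality assigned there. Summing over the partition thus reassembles the full boundary integral with the sign needed to cancel the interior part, giving $\langle \vec e | \vec f\rangle = 0$.

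Maximality is the crux, and my preferred route is a reduction to the single-causality Stokes-Dirac structure $D_0$ of \cite{schaft2002hamiltonian}, already known to satisfy $D_0 = D_0^\perp$. The only difference between $D$ and $D_0$ is bookkeeping: on each $\hat\Gamma_j$ the conjugate trace pair $(\mathrm{tr}\, e^p,\,(-1)^p\mathrm{tr}\, e^q)$ is relabeled, exchanging boundary flow and effort. This relabeling is a linear isomorphism $\Phi$ of the ambient bond space carrying $D_0$ onto $D$; and since interchanging the two members of a power-conjugate pair leaves their symmetrized product unchanged (again this is where the $(-1)^p$ factors matter), $\Phi$ preserves $\langle\langle\cdot,\cdot\rangle\rangle$ and hence maps orthogonal complements to orthogonal complements. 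From $D_0 = D_0^\perp$ I would then conclude $D = \Phi(D_0) = \Phi(D_0^\perp) = D^\perp$. A self-contained variant unfolds the proof behind \cite{schaft2002hamiltonian}: take $(\vec f', \vec e')\in D^\perp$, write $\langle \vec e' | \vec f\rangle + \langle \vec e | \vec f'\rangle = 0$ for all $(\vec f,\vec e)\in D$ parametrized by $(e^p,e^q)\in H^1\Lambda^{n-p}_\Omega\times H^1\Lambda^{n-q}_\Omega$, and integrate by parts to move derivatives onto the primed variables. Testing first against $e^p,e^q\in C_c^\infty$ annihilates the boundary terms and forces the interior relations $f'^p = (-1)^r\d e'^q$, $f'^q = \d e'^p$ weakly, which simultaneously yields the $H^1$-regularity placing $e'^p,e'^q$ in the effort spaces; testing then against general traces recovers \eqref{c04-eq:stokes-dirac-mixed-boundary-efforts} on each piece.

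The hard part will be this maximality direction in the infinite-dimensional setting: unlike the finite-dimensional Lagrangian case there is no dimension count to close the argument, and care is required both to extract $H^1$-regularity of the candidate efforts from the weak interior relations and to justify the boundary step through surjectivity of the trace map onto a dense subspace of $L^2$, now distributed across the partition $\{\Gamma_i,\hat\Gamma_j\}$. A secondary but genuine difficulty is the sign bookkeeping on the boundary pieces, which is exactly what the $(-1)^p$ factors in \eqref{c04-eq:stokes-dirac-mixed-boundary-efforts} are designed to control, and which must be tracked consistently for both the isotropy cancellation and the pairing-preservation of $\Phi$.
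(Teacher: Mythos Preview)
Your proposal is correct, but the paper takes a different route for the maximality direction. Rather than working with a pairing-preserving relabeling isomorphism $\Phi$ between bond spaces, the paper argues by \emph{compositionality}: it decomposes the domain $\Omega$ into $n$-dimensional pieces $\Omega_k$ and $\hat\Omega_l$ so that each boundary segment $\Gamma_i$ (resp.\ $\hat\Gamma_j$) lies entirely on some $\partial\Omega_k\cap\partial\Omega$ (resp.\ $\partial\hat\Omega_l\cap\partial\Omega$); on each piece the standard single-causality Stokes--Dirac structure from \cite{schaft2002hamiltonian} applies directly. The pieces are then glued along their internal interfaces $\partial\Omega_{kl}=-\partial\hat\Omega_{lk}$ by the power-preserving interconnection $f_k^{kl}=\hat e_l^{lk}$, $e_k^{kl}=\hat f_l^{lk}$, and the compositionality property of Dirac structures (Remark~2.2 of \cite{schaft2002hamiltonian}) yields that the composite is again a Dirac structure.

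Your relabeling argument is more elementary in that it avoids both the geometric decomposition of $\Omega$ and the appeal to the compositionality theorem (itself a nontrivial black box in the infinite-dimensional setting); it reduces everything to the single known case $D_0$ via one linear isometry of the bilinear form. The paper's approach, on the other hand, is more modular and makes explicit the physical picture of power exchange across internal interfaces, which is natural in the port-Hamiltonian framework and reusable for more general interconnection patterns. Your self-contained variant---testing first with compactly supported efforts to recover the interior relations and $H^1$-regularity, then with general traces to recover the boundary assignments piecewise---is a third valid path that neither you nor the paper fully carries out, but is closest in spirit to the original argument of \cite{schaft2002hamiltonian}.
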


\begin{proof}
    First observe that with the choice of boundary ports, and by construction of the subsets $\Gamma_i$ and $\hat \Gamma_j$, the boundary power flow can be expressed as
    \begin{equation}
		\label{c04-eq:stokes-dirac-mixed-power-continuity}
		\sum_{i=1}^{n_\Gamma} \langle e^{\Gamma}_i | f^{\Gamma}_i \rangle_{\Gamma_i} 
		+
		\sum_{j=1}^{\hat n_\Gamma} \langle \hat f^{\Gamma}_j | \hat e^{\Gamma}_j \rangle_{\hat \Gamma_j} 
		=
		(-1)^p \langle e^q | e^p \rangle_{\partial \Omega}.
	\end{equation}
    The proof that the above subspace is a Dirac structure consists of decomposing $\Omega$ and exploiting the \emph{compositionality} property, see Remark 2.2 of \cite{schaft2002hamiltonian}, of the Stokes-Dirac structure on each subset. For a graphical illustration, see Fig. \ref{c04-fig:sketch-omega-gamma}.

	1. Decompose $\Omega$ in a set of $n$-dimensional submanifolds $\Omega_k$ and $\hat \Omega_l$, with the same orientation as $\partial \Omega$ on $\partial\Omega_k \cap \partial \Omega$ and $\partial\hat \Omega_l \cap \partial \Omega$. On each subset, a Stokes-Dirac structure is defined, with alternating causality (but unique on each subset). Then $\bigcup_k \partial\Omega_k \cap \partial \Omega = \bigcup_i \Gamma_i$, $\bigcup_l \partial\hat \Omega_l \cap \partial \Omega = \bigcup_j \hat \Gamma_j$ and $\Gamma_i \cap \partial \Omega_{kl} = \emptyset$, $\hat \Gamma_j \cap \partial \hat \Omega_{lk} = \emptyset$ for all $i,j,k,l$. $\partial \Omega_{kl} = - \partial \hat{\Omega}_{lk}$ denotes the common part of the boundary of $\Omega_k$ and $\hat \Omega_l$, respectively, where the minus sign underscores the inverse orientation by construction. 

    2. Define on each common boundary $\partial \Omega_{kl} = - \partial \hat{\Omega}_{lk}$ the interconnection conditions $f_k^{kl} = e^p|_{\partial \Omega_{kl}} = \hat e_l^{lk}$ and $e_k^{kl} = (-1)^p e^q|_{\partial \Omega_{kl}} = \hat f_l^{lk}$. Then, the terms $\langle e_k^{kl} | f_k^{kl} \rangle_{\partial \Omega_{kl}}$ and $\langle \hat f_l^{lk} | \hat e_l^{lk} \rangle_{\partial \hat \Omega_{lk}}$ in the overall power balance equation cancel each other out due to the reverse integration direction. The interconnection is hence power-preserving, and the composition of the separate Stokes-Dirac structures is, due to their \emph{compositionality} property, itself a Stokes-Dirac structure. 
\end{proof}

\begin{figure}
	\begin{center}
        \includegraphics[scale=0.6]{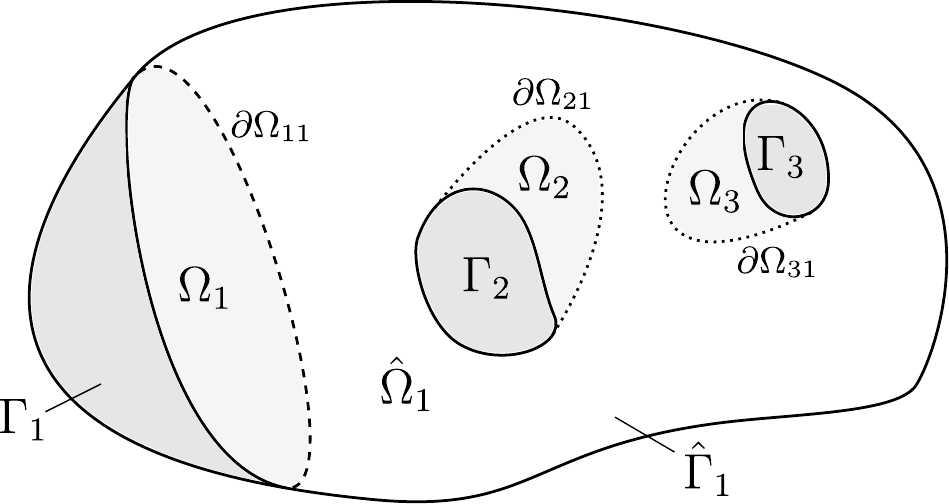}
	\end{center}
    \caption{Sketch of a domain $\Omega \subset \IR^3$ with subdomains $\Omega_1, \Omega_2$, $\Omega_3$ and $\hat \Omega_1$ and a partition of the boundary into $\Gamma_1,\Gamma_2,\Gamma_3$ and $\hat \Gamma_1$.}
	\label{c04-fig:sketch-omega-gamma}
\end{figure}

\begin{rmk}
	In the above proposition, boundary efforts and flows are defined as pure restrictions of either of the distributed efforts to the corresponding subsets $\Gamma_i$, $\hat{\Gamma}_j$ of the boundary. It is, however, also possible to define images of the previous ones under a transformation that preserves the inner product (isometry), e.\,g. \emph{scattering} variables \cite{legorrec2005dirac}.
\end{rmk}

\begin{conv}
	In terms of control, we consider the boundary efforts $u_i^q := e^\Gamma_i$, $i=1,\ldots n_\Gamma$ and $u_j^p := \hat e^\Gamma_j$, $j=1,\ldots,\hat n_\Gamma$, as \emph{boundary input} variables, while the boundary flows $y_i^p := f^\Gamma_i$, $y_j^q := \hat f^\Gamma_j$ are the (power conjugated) \emph{boundary outputs}.
\end{conv}

\subsection{Weak form of the Stokes-Dirac structure for two conservation laws}
\label{subsec:weak-form-stokes-dirac}

The first motivation to study the approximation of distributed parameter PH systems based on their \emph{weak} form is the fact that most of the common numerical methods in engineering, including commercial tools, are based on a Galerkin-type finite-dimensional approximation of the PDEs in weak form\footnote{We use the \emph{weak form} and not the \emph{variational form}. The reason is that we focus on the geometric structure of the equations and do not mention the associated variational problem. We refer to \cite{yoshimura2006dirac} and \cite{vankerschaver2010multi} for the link of the variational problem in Lagrangian mechanics in finite and infinite dimension with a Dirac structure. Note that this link is less obvious e.\,g. for non-Hamiltonian fluids, which are described by a non-canonical structure, see e.\,g. \cite{morrison1998hamiltonian}, \cite{camassa2014variational}.}.
Also in the context of existing works on linear PH distributed parameter systems in one spatial dimension, this perspective is natural. The statements on well-posedness and stability based on the theory of $C_0$ semigroups rely on the \emph{mild} solution of the abstract (operator) differential equation. These solutions, however, corresponds to the weak solutions, as known from the theory of PDEs, see \cite{jacob2012linear}, page 127: \emph{``In fact, the concept of a mild solution is the same as the concept of a weak solution used in the study of partial differential equations.''}
A third point, which motivates to discretize PH distributed parameter systems based on their weak form, is the close relation with \emph{discrete exterior calculus} (i.\,e. the mathematical formalism for integral modeling of conservation laws), which has been used in \cite{seslija2014explicit} for PH systems: ``\emph{Note that the process of integration to suppress discontinuity is, in spirit, equivalent to the idea of weak form used in the Finite Element method}'' \cite{desbrun2008discrete}.
Finally, also in the work of Bossavit on the mixed geometric discretization for computational electromagnetism \cite{bossavit1991differential}, \cite{bossavit1998computational}, the quality of a \emph{weak} formulation is addressed ``\emph{How weak is the weak solution in finite element methods}''  \cite{bossavit1998weak}.

The \emph{weak form} of the Stokes-Dirac structure of Proposition \ref{c04-prop:stokes-dirac-general-boundary-ports} is obtained by a \emph{duality pairing} (which involves the exterior product and integration) on $\Omega$ with \emph{test forms} of appropriate degrees which do \emph{not} vanish on the boundary\footnote{In the weak formulation of boundary value problems, mostly test functions with compact support inside $\Omega$ are chosen such that boundary conditions have to be imposed \emph{directly} on the solution. This is however not mandatory. By test functions which are non-zero on $\partial \Omega$, boundary conditions can be imposed in a \emph{weak} fashion, cf. \cite{quarteroni1994numerical}, Section 14.3.1, p. 483.}. The latter allows for a \emph{weak} imposition of the input boundary conditions $u_i^q = e^\Gamma_i$, $i=1,\ldots,n_\Gamma$ and $u_j^p=\hat e^\Gamma_j$, $j=1,\ldots,\hat n_\Gamma$. 

\begin{dfn}
The weak form of the Stokes-Dirac structure of Proposition \ref{c04-prop:stokes-dirac-general-boundary-ports} is given by the subspace $D \subset \mathcal F \times \mathcal E$ with $\mathcal F$ and $\mathcal E$ as in \eqref{c04-eq:spaces-F-E-split-boundary}, where
    \begin{equation}
        \label{c04-eq:stokes-dirac-weak-1}
    	\begin{alignedat}{2}
    		\langle v^p | f^p \rangle_\Omega &= \langle v^p | (-1)^r \mathrm d e^q \rangle_\Omega \qquad &&\forall v^p  \in H^1 \Lambda^{n-p}(\Omega),
            \\
    		\langle v^q | f^q \rangle_\Omega &= \langle v^q | \mathrm d e^p \rangle_\Omega \qquad  &&\forall v^q \in H^1 \Lambda^{n-q}(\Omega)
    	\end{alignedat}
    \end{equation}    
    holds and the boundary ports are defined by \eqref{c04-eq:stokes-dirac-mixed-boundary-efforts}.
\end{dfn}

Applying integration by parts according to \eqref{c04-eq:integration-by-parts}, we obtain the weak form of the Stokes-Dirac structure with \emph{weak treatment} of the boundary port variables.

\begin{prop}
    The \emph{weak form} of the Stokes-Dirac structure in Proposition \ref{c04-prop:stokes-dirac-general-boundary-ports} with \emph{weak treatment} of the boundary port variables is given by the subset $D \subset \mathcal F \times \mathcal E$, $\mathcal F$ and $\mathcal E$ as in \eqref{c04-eq:spaces-F-E-split-boundary}, where
    \begin{equation}
        \label{c04-eq:stokes-dirac-weak-2}
    	\begin{array}{rcccccc}
    		\langle v^p | f^p \rangle_\Omega &\!\!=\!\!& 
                (-1)^{r+q} \langle \mathrm d v^p | e^q \rangle_\Omega 
                &\!\!-\!\!& 
                \ds (-1)^{r+p+q} \sum_{i=1}^{n_\Gamma} \langle \mathrm{tr}\,v^p | e_i^\Gamma \rangle_{\Gamma_i}
                &\!\!-\!\!& 
                \ds (-1)^{r+p+q} \sum_{j=1}^{\hat{n}_\Gamma} \langle \mathrm{tr}\, v^p | \hat{f}_j^\Gamma \rangle_{\hat \Gamma_j}
            \\
    		\langle v^q | f^q \rangle_\Omega &\!\!=\!\!& 
                (-1)^{p} \langle \mathrm d v^q | e^p \rangle_\Omega 
                &\!\!-\!\!& 
                \ds (-1)^{p} \sum_{i=1}^{n_\Gamma} \langle \mathrm{tr}\,v^q | f_i^\Gamma \rangle_{\Gamma_i}
                &\!\!-\!\!& 
                \ds (-1)^{p} \sum_{j=1}^{\hat{n}_\Gamma} \mathrm{tr}\,\langle v^q | \hat e_j^\Gamma \rangle_{\hat\Gamma_j}
    	\end{array}
    \end{equation}  
    holds for all test forms $v^p \in H^1 \Lambda^{n-p}(\Omega)$ and $v^q\in  H^1 \Lambda^{n-q}(\Omega)$.
\end{prop}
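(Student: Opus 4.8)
The plan is to derive \eqref{c04-eq:stokes-dirac-weak-2} directly from the defining weak relations \eqref{c04-eq:stokes-dirac-weak-1} by applying the integration-by-parts formula \eqref{c04-eq:integration-by-parts} to each of the two equations, thereby transferring the exterior derivative off the effort form and onto the test form. In each case this produces a volume term with $\mathrm d$ acting on the test form plus a single boundary term over $\partial\Omega$; the latter is then split over the partition $\{\Gamma_i\}\cup\{\hat\Gamma_j\}$ and rewritten via the boundary-port definitions \eqref{c04-eq:stokes-dirac-mixed-boundary-efforts}. No new analytic input is needed beyond the trace theorem recalled in Section~\ref{subsubsec:lebesgue-sobolev}, which guarantees that $\mathrm{tr}\,v^p$, $\mathrm{tr}\,v^q$ and the traces of $e^p, e^q$ lie in $L^2$ on the boundary, so that all the pairings below are well defined.

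First I would treat the first line of \eqref{c04-eq:stokes-dirac-weak-1}, i.e. $\langle v^p | f^p\rangle_\Omega = (-1)^r\langle v^p | \mathrm d e^q\rangle_\Omega$, with $v^p\in H^1\Lambda^{n-p}(\Omega)$ and $e^q\in H^1\Lambda^{n-q}(\Omega)$. Since $p+q=n+1$ the degrees sum to $n-1$, which is exactly the regime of \eqref{c04-eq:integration-by-parts}. Applying it with $\lambda=v^p$ (so $k=n-p$ and $\mu=e^q\in\Lambda^{p-1}$, using $n-q=p-1$) gives $\langle \mathrm d v^p | e^q\rangle_\Omega = \langle \mathrm{tr}\,v^p | \mathrm{tr}\,e^q\rangle_{\partial\Omega} - (-1)^{n-p}\langle v^p | \mathrm d e^q\rangle_\Omega$. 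I would solve this for $\langle v^p|\mathrm d e^q\rangle_\Omega$ and multiply by $(-1)^r$. Using $n-p=q-1$, the volume prefactor reduces to $-(-1)^{r+q-1}=(-1)^{r+q}$ and the boundary prefactor to $(-1)^{r+q-1}=-(-1)^{r+q}$, matching the first line of \eqref{c04-eq:stokes-dirac-weak-2} once the boundary term is expanded as below.

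The second line is completely analogous: starting from $\langle v^q|f^q\rangle_\Omega = \langle v^q|\mathrm d e^p\rangle_\Omega$, I would apply \eqref{c04-eq:integration-by-parts} with $\lambda=v^q$ (so $k=n-q$ and $\mu=e^p$), solve for $\langle v^q|\mathrm d e^p\rangle_\Omega$, and use $n-q=p-1$ to reduce $(-1)^{n-q}$ to $-(-1)^p$. This delivers the volume prefactor $(-1)^p$ and the boundary prefactor $-(-1)^p$, as required.

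Finally, in both lines I would turn the single boundary integral into the sums in \eqref{c04-eq:stokes-dirac-weak-2}. Because the $\Gamma_i$ and $\hat\Gamma_j$ partition $\partial\Omega$ up to sets of measure zero, $\langle\mathrm{tr}\,v^p|\mathrm{tr}\,e^q\rangle_{\partial\Omega}=\sum_i\langle\mathrm{tr}\,v^p|\mathrm{tr}\,e^q\rangle_{\Gamma_i}+\sum_j\langle\mathrm{tr}\,v^p|\mathrm{tr}\,e^q\rangle_{\hat\Gamma_j}$, and I would substitute $\mathrm{tr}\,e^q|_{\Gamma_i}=(-1)^p e^\Gamma_i$ and $\mathrm{tr}\,e^q|_{\hat\Gamma_j}=(-1)^p\hat f^\Gamma_j$ from \eqref{c04-eq:stokes-dirac-mixed-boundary-efforts}; the extra $(-1)^p$ promotes $-(-1)^{r+q}$ to $-(-1)^{r+p+q}$, exactly the prefactor in the first line. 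The analogous substitutions $\mathrm{tr}\,e^p|_{\Gamma_i}=f^\Gamma_i$, $\mathrm{tr}\,e^p|_{\hat\Gamma_j}=\hat e^\Gamma_j$ dispose of the second line (where the misplaced $\mathrm{tr}$ in its last term should be read as $\langle\mathrm{tr}\,v^q|\hat e^\Gamma_j\rangle_{\hat\Gamma_j}$). The only genuine obstacle here is the sign bookkeeping: every prefactor depends on $r=pq+1$ together with the parity identities $n-p=q-1$ and $n-q=p-1$, so I would carry these reductions out explicitly rather than merely modulo $2$, to be certain the $(-1)^p$ from the boundary-port definitions combines into the correct overall sign.
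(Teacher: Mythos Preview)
Your proposal is correct and follows essentially the same approach as the paper: apply the integration-by-parts formula \eqref{c04-eq:integration-by-parts} to each line of \eqref{c04-eq:stokes-dirac-weak-1}, then split the resulting boundary integral over the partition $\{\Gamma_i\}\cup\{\hat\Gamma_j\}$ and substitute the boundary-port definitions \eqref{c04-eq:stokes-dirac-mixed-boundary-efforts}. Your version is in fact more explicit than the paper's, which simply states that \eqref{c04-eq:stokes-dirac-weak-2} follows from integration by parts together with the two boundary identities $(-1)^p\langle v^p|\mathrm{tr}\,e^q\rangle_{\partial\Omega}=\sum_i\langle v^p|e_i^\Gamma\rangle_{\Gamma_i}+\sum_j\langle v^p|\hat f_j^\Gamma\rangle_{\hat\Gamma_j}$ and $\langle v^q|\mathrm{tr}\,e^p\rangle_{\partial\Omega}=\sum_i\langle v^q|f_i^\Gamma\rangle_{\Gamma_i}+\sum_j\langle v^q|\hat e_j^\Gamma\rangle_{\hat\Gamma_j}$, without working out the sign reductions you carry through.
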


\begin{proof}
    Equation \eqref{c04-eq:stokes-dirac-weak-2} follows from \eqref{c04-eq:stokes-dirac-weak-1} via integration by parts and the identities
    \begin{equation}
    \begin{split}
        (-1)^p \langle v^p | \mathrm{tr}\, e^q \rangle_{\partial \Omega} = \sum_{i=1}^{n_\Gamma} \langle v^p | e_i^\Gamma \rangle_{\Gamma_i} + \sum_{j=1}^{\hat n_\Gamma} \langle v^p | \hat f_j^\Gamma \rangle_{\hat \Gamma_j},\\
        \langle v^q | \mathrm{tr}\, e^p \rangle_{\partial \Omega} = \sum_{i=1}^{n_\Gamma} \langle v^q | f_i^\Gamma \rangle_{\Gamma_i} + \sum_{j=1}^{\hat n_\Gamma} \langle v^q | \hat e_j^\Gamma \rangle_{\hat \Gamma_j}.    
    \end{split}
    \end{equation}
    The latter are due to the definition \eqref{c04-eq:stokes-dirac-mixed-boundary-efforts} of boundary port variables and the definition of the subsets $\Gamma_i$, $\hat \Gamma_j$, which cover $\partial \Omega$ and whose intersections have zero measure.
\end{proof}

\begin{rmk}
    The latter representation of the Stokes-Dirac structure -- if considered on a single control volume -- is suitable for  \emph{discontinuous Galerkin} schemes, see e.\,g. \cite{hesthaven2007nodal}, where the boundary terms are replaced by suitable \emph{numerical fluxes}.
\end{rmk}

\begin{rmk}
Note that the two conservation laws are described by the canonical differential operator $\mathcal J$ in \eqref{eq:stokes-dirac-interconnection}, which contains only exterior derivatives. The weak form of the Stokes-Dirac structure is defined based on the \emph{metric-independent} duality product arising from the integration-by-parts formula \eqref{c04-eq:integration-by-parts}, applied to \emph{both} conservation laws in Eq. \eqref{eq:stokes-dirac-interconnection}. This is a difference to other approaches like the mixed mimetic discretization of the Stokes flow in \cite{kreeft2013mixed} or the structure-preserving PH discretization in \cite{farle2013porta}, where integration by parts is only applied to the equations that contain the \emph{metric-dependent} codifferential.
\end{rmk}

Using the effort forms as test forms, $v^p=e^p$, $v^q=e^q$, and adding both equations of \eqref{c04-eq:stokes-dirac-weak-2}, we obtain after some reformulations and exploiting \eqref{c04-eq:stokes-dirac-mixed-power-continuity}, 
\begin{equation}
	\label{c04-eq:power-continuity-weak}
	\langle e^p | f^p \rangle_\Omega + \langle e^q | f^q \rangle_\Omega 
	+ (-1)^{p} \langle e^q | e^p \rangle_{\partial \Omega} = 0,
\end{equation}
or, with the definition of boundary port variables,
\begin{equation}
	\label{c04-eq:power-continuity-weak-ports}
	\langle e^p | f^p \rangle_\Omega + \langle e^q | f^q \rangle_\Omega 
	+ \sum_{i=1}^{n_\Gamma} \langle e^{\Gamma}_i | f^{\Gamma}_i \rangle_{\Gamma_{i}}
	+ \sum_{j=1}^{\hat n_\Gamma} \langle \hat f^{\Gamma}_j | \hat e^{\Gamma}_j \rangle_{\hat \Gamma_{j}}
     = 0,
\end{equation}
which corresponds to the initially derived power continuity equation \eqref{c04-eq:power-continuity}.

We have arrived at a weak representation of the Stokes-Dirac structure of Proposition \ref{c04-prop:stokes-dirac-general-boundary-ports}, which suits to establish discretized mixed Galerkin models of PH systems of two conservation laws.

\section[Geometric discretization]{Geometric discretization of the port-Hamiltonian system}

In this section, we study the \emph{mixed Galerkin approximation} of the Stokes-Dirac structure in weak form as defined in the previous section. Expressing \eqref{c04-eq:stokes-dirac-weak-2} in approximation subspaces that retain the notion of the duality products as \emph{power pairings}, and defining \emph{in- and output port variables} whose pairings represent the transmitted power over the boundary, we obtain a finite number of equations for the Galerkin coefficients. On the so-defined subset of the discrete bond space, a discrete power continuity equation holds. Due to the different dimensions of the geometrically chosen approximation spaces, the bilinear forms that define the power pairings are, however, \emph{degenerate}. To obtain a finite-dimensional \emph{Dirac structure} with \emph{non-degenerate} power pairings, which is the basis to formulate a PH approximation model in \emph{state space form}, we introduce  \emph{power-preserving} mappings of the discrete flow and effort vectors onto finite-dimensional spaces of appropriate, identical dimension. The geometric discretization is completed by a \emph{consistent discretization} of the constitutive equations.

\subsection{Mixed Galerkin approximation with boundary port variables}
\label{subsec:mixed-galerkin}

We introduce the mixed Galerkin approximation of the weak form of the Stokes-Dirac structure for a system of two conservation laws. Mixed or \emph{duality} methods have been introduced to include constraints like the divergence-freedom of flows or to take account for the precise approximation of additional physical variables in the numerical approximation, see \cite{brezzi1991mixed} as a classical reference for mixed finite elements. The duality of the power variables in the Stokes-Dirac structure imposes the use of a mixed approximation. 

The \emph{boundary inputs} are weakly imposed as boundary conditions, and appear \emph{immediately} in the finite-dimensional system of equations for the Galerkin degrees of freedom. \emph{Boundary outputs} are constructed via the discrete power balance. This point of view, which leads to state space models in input-output form, distinguishes the structure-preserving discretization of PH systems from classical approaches to the numerical approximation of PDEs.

For the compactness of notation, we omit to explicitly write out the trace operator on the subsets of the boundary, i.\,e. $\langle v^p | e^q \rangle_{\Gamma_i} := \langle \mathrm{tr}\,v^p | \mathrm{tr}\, e^q \rangle_{\Gamma_i}$ etc. in the sequel. We start with the representation\footnote{In the sequel, we denote portions of the boundary with greek indices and elements of the approximation subspaces with latin indices.}
    \begin{equation}
        \label{c04-eq:stokes-dirac-weak-3}
    	\begin{array}{rcccccc}
    		\langle v^p | f^p \rangle_\Omega &\!\!=\!\!& 
                (-1)^{r+q} \langle \mathrm d v^p | e^q \rangle_\Omega 
                &\!\!-\!\!& 
                \ds (-1)^{r+q} \sum_{\mu=1}^{n_\Gamma} \langle v^p |  e^q \rangle_{\Gamma_\mu}
                &\!\!-\!\!& 
                \ds (-1)^{r+q} \sum_{\nu=1}^{\hat{n}_\Gamma} \langle v^p | e^q \rangle_{\hat \Gamma_\nu}
            \\
    		\langle v^q | f^q \rangle_\Omega &\!\!=\!\!& 
                (-1)^{p} \langle \mathrm d v^q | e^p \rangle_\Omega 
                &\!\!-\!\!& 
                \ds (-1)^{p} \sum_{\mu=1}^{n_\Gamma} \langle v^q | e^p \rangle_{\Gamma_\mu}
                &\!\!-\!\!& 
                \ds (-1)^{p} \sum_{\nu=1}^{\hat{n}_\Gamma} \langle v^q | e^p \rangle_{\hat\Gamma_\nu},
    	\end{array}
    \end{equation}
i.\,e. \eqref{c04-eq:stokes-dirac-weak-2} without the explicit denomination of the boundary port variables. For a \emph{mixed Galerkin} approximation of the Stokes-Dirac structure, we
\begin{itemize}
	\item use different (\emph{dual} or \emph{mixed}) bases to approximate the spaces of flow and effort forms and
	\item from these bases, we choose the appropriate ones to approximate the test forms (\emph{Galerkin} method). 
\end{itemize}
Taking the test forms from the effort bases is the most obvious choice for the approximation of the Stokes-Dirac structure, as the resulting (discrete) duality pairings have an immediate interpretation in terms of power, see Eq. \eqref{c04-eq:power-continuity-weak}.

\subsubsection{Approximation problem and compatibility condition}
The flow differential forms will be approximated by linear combinations of the basis forms of the subspaces
\begin{equation}
    \begin{array}{rcccl}
    \label{c04-eq:galerkin-subspaces-flows}
    \Psi_h^p &=\;&  \textrm{span} \{ \psi_1^p, \ldots, \psi^p_{N_p} \} &\subset\;& L^2\Lambda^p(\Omega),\\
    \Psi_h^q
    &=\;& \textrm{span} \{ \psi_1^p, \ldots, \psi^q_{N_q} \} &\subset\;& L^2\Lambda^q(\Omega).
    \end{array}
\end{equation}
The subspaces for the effort and test forms are, accordingly,
\begin{equation}
    \begin{array}{rcccl}
    \label{c04-eq:galerkin-subspaces-efforts}
    \Phi^p_h &=\;& \textrm{span} \{ \varphi_1^p, \ldots, \varphi^p_{M_p} \} &\subset\;& H^1 \Lambda^{n-p}(\Omega),\\
    \Phi_h^q
    &=\;& \textrm{span} \{ \varphi_1^q, \ldots, \varphi^q_{M_q} \} &\subset\;& H^1\Lambda^{n-q}(\Omega).
    \end{array}
\end{equation}
From the trace theorem for $H^1$ spaces (as discussed in Subsection \ref{subsubsec:lebesgue-sobolev}), we know that the extension of the latter spaces to the boundary is $L^2$.
The subscript $h>0$ denotes the discretization parameter\footnote{Which corresponds to the spatial extent of finite elements or the inverse of the polynomial approximation order.} and we assume an \emph{appropriate} choice of approximation spaces, i.\,e. for a given functional space $V$ and its approximation $V_h$  (see \cite{quarteroni1994numerical}, Section 5.2) it is true that $\inf_{v_h \in V_h} \| v - v_h \| \rightarrow 0$ for all $v\in V$ if $h\rightarrow 0$.
The \emph{mixed Galerkin approximation problem} is as follows: Find approximate flow and effort forms
\begin{equation}
\label{c04-eq:galerkin-approx-flows}
\begin{split}
   	f^{p}_h(\vec z) &= \sum_{k=1}^{N_p} f^{p}_{k} \psi^p_{k}(\vec z) 
        = \langle \vec f^p | \bs \psi^p(\vec z) \rangle 
        \in \Psi_h^p,\\
   	f^{q}_h(\vec z) &= \sum_{l=1}^{N_q} f^{q}_{l} \psi^q_{l}(\vec z) 
        = \langle \vec f^q | \bs \psi^q(\vec z) \rangle 
        \in \Psi_h^q,
\end{split}
\end{equation}
and
\begin{equation}
\label{c04-eq:galerkin-approx-efforts}
\begin{split}
   	e^{p}_h(\vec z) &= \sum_{i=1}^{M_p} e^{p}_{i} \varphi^p_{i}(\vec z) 
        = \langle \vec e^p | \bs \varphi^p(\vec z) \rangle 
        \in \Phi_h^p,\\
   	e^{q}_h(\vec z) &= \sum_{j=1}^{M_q} e^{q}_{j} \varphi^q_{j}(\vec z) 
        = \langle \vec e^q | \bs \varphi^q(\vec z) \rangle 
        \in \Phi_h^q,
\end{split}
\end{equation}
where $\langle \cdot| \cdot \rangle$ denotes the standard inner product on $\IR^n$ as in Definition \ref{c04-def:fin-dim-dirac-structure}, such that 
\begin{equation}
	\label{c04-eq:stokes-dirac-weak-form-2h}
	\begin{array}{rcccccl}
		\langle v^p_h | f^p_h \rangle_\Omega &\!\!=\!\!& 
            (-1)^{r+q} \langle \textrm d v^p_h | e^q_h \rangle_\Omega &\!\!-\!\!&
            \ds (-1)^{r+q} \sum_{\mu=1}^{n_\Gamma}  \langle v^p_h | e^q_h \rangle_{\Gamma_{\mu}} &\!\!-\!\!&
            \ds (-1)^{r+q} \sum_{\nu=1}^{\hat n_\Gamma}  \langle v^p_h | e^q_h \rangle_{\hat \Gamma_{\nu}},
		\\
		\langle v^q_h | f^q_h \rangle_\Omega &\!\!=\!\!& 
            (-1)^{p} \langle \textrm d v^q_h | e^p_h \rangle_\Omega &\!\!-\!\!& 
            \ds (-1)^{p} \sum_{\mu=1}^{n_\Gamma}  \langle v^q_h | e^p_h \rangle_{\Gamma_{\mu}} &\!\!-\!\!&
            \ds (-1)^{p} \sum_{\nu=1}^{\hat n_\Gamma} \langle v^q_h | e^p_h \rangle_{\hat\Gamma_{\nu}}
	\end{array}
\end{equation}
hold for all $v_h^p \in \Phi_h^p$, $v_h^q \in \Phi_h^q$. 
The \emph{discrete flow and effort vectors} 
\begin{equation}
    \begin{array}{rcl}
        \vec f^p &=& [f_1^p,\ldots, f_{N_p}^p]^T,\\ 
        \vec f^q &=& [f_1^q,\ldots, f_{N_q}^q]^T,
    \end{array}
    \quad \text{and} \quad
    \begin{array}{rcl}
        \vec e^p &=& [e_1^p,\ldots, e_{M_p}^p]^T, \\
        \vec e^q &=& [e_1^q,\ldots, e_{M_q}^q]^T
    \end{array}
\end{equation} 
contain the approximation coefficients, and the vectors (we omit the argument $\vec z$ in the sequel)
\begin{equation}
    \begin{array}{rcl}
        \bs \psi^p(\vec z) &=& [\psi_1^p(\vec z),\ldots, \psi_{N_p}^p(\vec z)]^T,\\ 
        \bs \psi^q(\vec z) &=& [\psi_1^q(\vec z),\ldots, \psi_{N_q}^q(\vec z)]^T,
    \end{array}
    \quad \text{and} \quad
    \begin{array}{rcl}
        \bs \varphi^p(\vec z) &=& [\varphi_1^p(\vec z),\ldots, \varphi_{M_p}^p(\vec z)]^T, \\
        \bs \varphi^q(\vec z) &=& [\varphi_1^q(\vec z),\ldots, \varphi_{M_q}^q(\vec z)]^T
    \end{array}
\end{equation} 
contain the approximation basis forms. The flow variables are understood as time derivatives of the distributed conserved quantities with negative sign, see \eqref{eq:phs-flows}. Thus, they are approximated in the same spatial bases,
\begin{equation}
\label{c04-eq:galerkin-approx-states}
\begin{split}
   	{p}_h(\vec z) &= \sum_{k=1}^{N_p} {p}_{k} \psi^p_{k}(\vec z) 
        = \langle \vec p| \bs \psi^p(\vec z) \rangle 
        \in \Psi_h^p, \\
   	{q}_h(\vec z) &= \sum_{l=1}^{N_q} {q}_{l} \psi^q_{l}(\vec z) 
        = \langle \vec q| \bs \psi^q(\vec z) \rangle 
        \in \Psi_h^q,
\end{split}
\end{equation}
and 
\begin{equation}
    \vec p = [p_1,\ldots, p_{N_p}]^T,
    \quad \text{and} \quad
    \vec q = [q_1,\ldots, q_{N_q}]^T,
\end{equation} 
denote the vectors of \emph{discrete} or \emph{integral conserved quantities}.

The mixed Galerkin approximation \eqref{c04-eq:stokes-dirac-weak-form-2h} of \eqref{c04-eq:stokes-dirac-weak-3} is \emph{exact} for flow and effort forms in the approximation spaces \eqref{c04-eq:galerkin-subspaces-flows}, \eqref{c04-eq:galerkin-subspaces-efforts} (in these subspaces, the residual error vanishes), if the following \emph{compatibility conditions} hold:
\begin{equation}
    \label{c04-eq:compatibility-conditions}
    \begin{split}
    \mathrm{span}\{ \psi^p_{1}, \ldots, \psi^p_{N_p} \} &= \mathrm{span}\{ \d \varphi^q_{1}, \ldots, \d \varphi^q_{M_q} \},\\
    \mathrm{span}\{ \psi^q_{1}, \ldots, \psi^q_{N_q} \} &= \mathrm{span}\{ \d \varphi^p_{1}, \ldots, \d \varphi^p_{M_p} \}.
    \end{split}
\end{equation}
In contrast to \cite{golo2004hamiltonian} (Assumptions 3 and 7), this \emph{compatibility of forms}\footnote{In other words, this is the de Rham property of the sequence of approximation subspaces.} is understood \emph{in the weak sense}. This means, more precisely -- consider the original weak formulation \eqref{c04-eq:stokes-dirac-weak-1} and the definition of the weak exterior derivative -- that for all test forms with compact support inside $\Omega$, i.\,e. $v^p \in H^1_0 \Lambda^{n-p}(\Omega)$, $v^q \in H^1_0 \Lambda^{n-q}(\Omega)$, there exist constants $a^p_{k}, a^q_{l}$, $b^p_{i}, b^q_{j}$ such that
\begin{equation}
    \begin{split}
        \sum_{k=1}^{N_p} a_{k}^p \langle v^p | \psi_{k}^p \rangle_\Omega + 
        \sum_{j=1}^{M_q} b_{j}^q \langle v^p | \d \varphi_{j}^q \rangle_\Omega &= 0,\\
        \sum_{l=1}^{N_q} a_{l}^q \langle v^q | \psi_{l}^q \rangle_\Omega + 
        \sum_{i=1}^{M_p} b_{i}^p \langle v^q | \d \varphi_{i}^p \rangle_\Omega &= 0.
    \end{split}
\end{equation}

\subsubsection{Approximation of the Stokes-Dirac structure}
\label{subsec:approx-stokes-dirac}

We approximate the weak formulation \eqref{c04-eq:stokes-dirac-weak-3} of the Stokes-Dirac structure by substituting the flow and effort forms with their finite-dimensional approximations \eqref{c04-eq:galerkin-approx-flows}, \eqref{c04-eq:galerkin-approx-efforts}. By choosing the test forms from the effort bases,
\begin{equation}
    \label{c04-eq:test-forms-approx}
    v_h^p = \langle \vec v^p | \bs \varphi^p \rangle, \quad
    v_h^q = \langle \vec v^q | \bs \varphi^q \rangle, \qquad
    \vec v^p \in \IR^{M_p}, \; 
    \vec v^q \in \IR^{M_q},
\end{equation}
the finite-dimensional inner products in the approximation will retain the interpretation in terms of \emph{power}. We obtain (the exterior derivative applies element-wise to a vector of differential forms)
\begin{equation}
\label{c04-eq:approx-system-of-eqs}
\begin{split}
	\Big\langle \langle \vec v^p | \bs \varphi^p \rangle \;\Big|\; \langle \vec f^p | \bs \psi^p \rangle \Big\rangle_{\Omega} - 
    	(-1)^{r+q} 
    	\Big\langle \langle \vec v^p | \d \bs\varphi^p \rangle \;\Big|\; \langle \vec e^q | \bs \varphi^q \rangle \Big\rangle_{\Omega} \hspace{6cm} \\
	+ (-1)^{r+q}
    \sum_{\mu=1}^{n_\Gamma}
        \Big\langle \langle \vec v^p | \bs\varphi^p \rangle \;\Big|\; \langle \vec e^q | \bs \varphi^q \rangle \Big\rangle_{\Gamma_\mu} 
	+ (-1)^{r+q}
    \sum_{\nu=1}^{\hat n_\Gamma}
        \Big\langle \langle \vec v^p | \bs\varphi^p \rangle \;\Big|\; \langle \vec e^q | \bs \varphi^q \rangle \Big\rangle_{\hat \Gamma_\nu}
    = 0,\\
	\Big\langle \langle \vec v^q | \bs \varphi^q \rangle \;\Big|\; \langle \vec f^q | \bs \psi^q \rangle \Big\rangle_{\Omega} - 
    	(-1)^{p} 
    	\Big\langle \langle \vec v^q | \d \bs\varphi^q \rangle \;\Big|\; \langle \vec e^p | \bs \varphi^p \rangle \Big\rangle_{\Omega} \hspace{6cm}\\
	+ (-1)^{p}
    \sum_{\mu=1}^{n_\Gamma}
        \Big\langle \langle \vec v^q | \bs\varphi^q \rangle \;\Big|\; \langle \vec e^p | \bs \varphi^p \rangle \Big\rangle_{\Gamma_\mu} 
	+ (-1)^{p}
    \sum_{\nu=1}^{\hat n_\Gamma}
        \Big\langle \langle \vec v^q | \bs\varphi^q \rangle \;\Big|\; \langle \vec e^p | \bs \varphi^p \rangle \Big\rangle_{\hat \Gamma_\nu}
    = 0.
\end{split}
\end{equation}
Evaluating the integrals over the products of basis forms, the system of equations can be written
\begin{equation}
	\label{c04-eq:finit-dim-var-problem}
	\begin{split}
		\Big \langle \vec v^p \Big| \vec M_p \vec f^p \Big \rangle  
            + \Big \langle \vec v^p \Big|
            \Big(\vec K_p + \sum_{\mu=1}^{n_\Gamma}\vec L_p^\mu + \sum_{\nu=1}^{\hat n_\Gamma} \hat{\vec L}_p^\nu \Big) \vec e^q \Big \rangle &= 0,
         \\		
		\Big \langle \vec v^q \Big| \vec M_q \vec f^q \Big \rangle  
            + \Big \langle \vec v^q \Big|  
            \Big (\vec K_q + \sum_{\mu=1}^{n_\Gamma}\vec L_q^\mu + \sum_{\nu=1}^{\hat n_\Gamma} \hat{\vec L}_q^\nu \Big) \vec e^p \Big \rangle &= 0,
	\end{split}
\end{equation}
with the coefficient matrices $\vec M_p \in \mathbb R^{M_p\times N_p}$, $\vec M_q \in \mathbb R^{M_q\times N_q}$, $\vec K_p, \vec L_p^\mu, \hat{\vec L}_p^\nu \in \mathbb R^{M_p\times M_q}$, $\vec K_q, \vec L_q^\mu, \hat{\vec L}_q^\nu \in \mathbb R^{M_q\times M_p}$, $\mu=1,\ldots,n_\Gamma$, $\nu=1,\ldots, \hat n_\Gamma$, composed of the elements
\begin{equation}
    \label{c04-eq:elements-M-K-L}
        \begin{array}{rcrrcr}
		[\vec M_p]_{ik} &=& \langle \varphi^p_{i} | \psi^p_{k} \rangle_\Omega,& \qquad 
			[\vec M_q]_{jl} &=& \langle \varphi^q_{j} | \psi^q_{l} \rangle_\Omega,\\[1ex]
		[\vec K_p]_{ij} &=& - (-1)^{r+q} \langle \d \varphi^p_{i} | \varphi^q_{j} \rangle_\Omega, &\qquad 
			[\vec K_q]_{ji} &=& - (-1)^p \langle \d \varphi^q_{j} | \varphi^p_{i} \rangle_\Omega,\\[1ex]
		[\vec L_p^\mu]_{ij} &=& (-1)^{r+q} \langle \varphi^p_{i} | \varphi^q_{j} \rangle_{\Gamma_\mu}, &\qquad 
			[\vec L_q^\mu]_{ji} &=& (-1)^p \langle \varphi^q_{j} | \varphi^p_{i} \rangle_{\Gamma_\mu},\\[1ex]
        [\hat{\vec L}_p^\nu]_{ij} &=& (-1)^{r+q} \langle \varphi^p_{i} | \varphi^q_{j} \rangle_{\hat\Gamma_\nu}, &\qquad 
			[\hat{\vec L}_q^\nu]_{ji} &=& (-1)^p \langle \varphi^q_{j} | \varphi^p_{i} \rangle_{\hat\Gamma_\nu}.
    \end{array}
\end{equation}
The equations of \eqref{c04-eq:finit-dim-var-problem} have to hold for arbitrary $\vec v^p \in \IR^{M_p}$, $\vec v^q \in \IR^{M_q}$, which yields the  equations for the discrete flow and effort vectors
\begin{equation}
    \label{c04-eq:finit-dim-var-problem-matrix}
    \begin{split}
        \vec M_p \vec f^p + (\vec K_p + \vec L_p) \vec e^q &= \vec 0,\\
        \vec M_q \vec f^q + (\vec K_q + \vec L_q) \vec e^p &= \vec 0.
    \end{split}
\end{equation}
By skew-symmetry of the wedge product, see Eq. \eqref{c04-eq:skew-symmetry-wedge-product}, it is straightforward to show that 
\begin{equation}
    [\vec L_p^\mu]_{ij} = [\vec L_q^\mu]_{ji},\qquad     
    [\hat{\vec L}_p^\nu]_{ij} = [\hat{\vec L}_q^\mu]_{ji},
\end{equation} 
i.\,e. $\vec L_p^\mu = (\vec L_q^\mu)^T$ and $\hat{\vec L}_p^\nu = (\hat{\vec L}_q^\nu)^T$. By defining 
\begin{equation}
    \label{c04-eq:definition-Lp-Lq}
    \vec L_p = \sum_{\mu=1}^{n_\Gamma} \vec L_p^\mu + \sum_{\nu=1}^{\hat n_\Gamma} \hat{\vec L}_p^\nu, \qquad 
    \vec L_q = \sum_{\mu=1}^{n_\Gamma} \vec L_q^\mu + \sum_{\nu=1}^{\hat n_\Gamma} \hat{\vec L}_q^\nu,
\end{equation}
we can show the following.
\begin{lem}
    The matrices $\vec K_p, \vec K_q$ and $\vec L_p, \vec L_q$ are related via $[\vec K_p\! +\! \vec L_p]_{ij} +
            [\vec K_q\! +\! \vec L_q]_{ji}  =
            [\vec L_p]_{ij} = [\vec L_q]_{ji}$, i.\,e.
    \begin{equation}
        \label{c04-eq:relations-K-L}
        (\vec K_p + \vec L_p) + (\vec K_q + \vec L_q)^T = \vec L_p = \vec L_q^T.
    \end{equation}
\end{lem}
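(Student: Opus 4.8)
The plan is to prove the identity entrywise and to recognise it as the discrete image of the integration-by-parts formula \eqref{c04-eq:integration-by-parts}. First I would dispose of the second equality $\vec L_p = \vec L_q^T$ (equivalently $[\vec L_p]_{ij} = [\vec L_q]_{ji}$): this is immediate, since the relations $\vec L_p^\mu = (\vec L_q^\mu)^T$ and $\hat{\vec L}_p^\nu = (\hat{\vec L}_q^\nu)^T$ established just above from the skew-symmetry of the wedge product carry over termwise to the sums \eqref{c04-eq:definition-Lp-Lq}. With this in hand, the first equality reduces, after using $[\vec L_q]_{ji} = [\vec L_p]_{ij}$ and cancelling one copy of $[\vec L_p]_{ij}$, to the single scalar relation
\[
    [\vec K_p]_{ij} + [\vec K_q]_{ji} + [\vec L_p]_{ij} = 0,
\]
which is nothing but a discrete summation-by-parts identity linking the two volume blocks through the boundary block.

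To prove it I would first collapse the boundary contribution: because the $\Gamma_\mu$ and $\hat\Gamma_\nu$ cover $\partial\Omega$ with intersections of measure zero, \eqref{c04-eq:elements-M-K-L} and \eqref{c04-eq:definition-Lp-Lq} give $[\vec L_p]_{ij} = (-1)^{r+q}\langle\varphi^p_i | \varphi^q_j\rangle_{\partial\Omega}$. Since $p+q=n+1$, the form $\varphi^p_i$ has degree $n-p=q-1$ and $\varphi^q_j$ has degree $n-q=p-1$, so \eqref{c04-eq:integration-by-parts} with $\lambda=\varphi^p_i$ (hence $k=q-1$) and $\mu=\varphi^q_j$ reads
\[
    \langle\d\varphi^p_i | \varphi^q_j\rangle_\Omega = \langle\varphi^p_i | \varphi^q_j\rangle_{\partial\Omega} - (-1)^{q-1}\langle\varphi^p_i | \d\varphi^q_j\rangle_\Omega.
\]
Substituting the definitions of $[\vec K_p]_{ij}$ and $[\vec L_p]_{ij}$, the two boundary integrals cancel exactly and one is left with $[\vec K_p]_{ij}+[\vec L_p]_{ij}=(-1)^{r-1}\langle\varphi^p_i | \d\varphi^q_j\rangle_\Omega$. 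The surviving volume integral is related to the one occurring in $[\vec K_q]_{ji}$ by the skew-symmetry \eqref{c04-eq:skew-symmetry-wedge-product}, namely $\langle\varphi^p_i | \d\varphi^q_j\rangle_\Omega = (-1)^{(q-1)p}\langle\d\varphi^q_j | \varphi^p_i\rangle_\Omega$.

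Collecting all three terms then leaves the single expression $\big[(-1)^{r-1+(q-1)p}-(-1)^p\big]\langle\d\varphi^q_j | \varphi^p_i\rangle_\Omega$, so the whole statement hinges on the vanishing of the bracketed sign. This bookkeeping is the only real obstacle, and I expect it to close by inserting $r=pq+1$: then $r-1+(q-1)p = 2pq-p \equiv p \pmod 2$, whence $(-1)^{r-1+(q-1)p}=(-1)^p$ and the bracket is zero, giving the scalar identity and hence the lemma. A final remark I would make is that \eqref{c04-eq:integration-by-parts} and \eqref{c04-eq:skew-symmetry-wedge-product}, although stated for smooth forms, apply to the $H^1$ basis forms $\varphi^p_i,\varphi^q_j$ via the weak exterior derivative and the trace theorem of Subsection \ref{subsubsec:lebesgue-sobolev}, which legitimises every pairing used above.
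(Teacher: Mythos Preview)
Your proof is correct and follows essentially the same route as the paper: both reduce the claim to an entrywise identity and verify it using the integration-by-parts formula \eqref{c04-eq:integration-by-parts} together with the skew-symmetry \eqref{c04-eq:skew-symmetry-wedge-product} of the wedge product. The paper writes out all four terms of $[\vec K_p+\vec L_p]_{ij}+[\vec K_q+\vec L_q]_{ji}$ at once and then invokes these two formulas in a single line, whereas you first cancel one copy of $[\vec L_p]_{ij}$ to isolate the discrete summation-by-parts identity $[\vec K_p]_{ij}+[\vec K_q]_{ji}+[\vec L_p]_{ij}=0$ and carry the sign bookkeeping through explicitly; this is the same argument, just more transparently organized.
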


\begin{proof}
By the definition \eqref{c04-eq:definition-Lp-Lq} and the corresponding parts of \eqref{c04-eq:elements-M-K-L}, the elements of $\vec L_p, \vec L_q$ are duality products over the effort basis forms on the complete boundary $\partial \Omega$. Thus, we have that 
\begin{multline}
    [\vec K_p + \vec L_p]_{ij}  + [\vec K_q + \vec L_q]_{ji} 
     =\\
    - (-1)^{r+q} \langle \d \varphi^p_i | \varphi^q_j \rangle_\Omega 
    + (-1)^{r+q} \langle \varphi^p_i | \varphi^q_j \rangle_{\partial \Omega}
    - (-1)^p \langle \d \varphi^q_j | \varphi^p_i \rangle_\Omega 
    + (-1)^p \langle \varphi^q_j | \varphi^p_i \rangle_{\partial \Omega}.    
\end{multline}
Using skew-symmetry of the wedge product \eqref{c04-eq:skew-symmetry-wedge-product} and the integration-by-parts formula for differential forms \eqref{c04-eq:integration-by-parts}, the right hand side can be rewritten as
\begin{equation}
    (-1)^p \langle \varphi^q_{j}(z) | \varphi^p_{i}(z) \rangle_{\partial \Omega}
    = [\vec L_q]_{ji}
    = [\vec L_p]_{ij},
\end{equation}
which proves the claim.
\end{proof}

\begin{dfn}
    The quadratic forms over the discrete effort vectors with the corresponding matrices $\vec L_p, \vec L_p^\mu, \hat{\vec L}_p^\nu$ and $\vec L_q, \vec L_q^\mu, \hat{\vec L}_q^\nu$ describe the approximate power transmitted over the boundary $\partial \Omega$ or its parts. We refer to these matrices as \emph{boundary power matrices}. 
\end{dfn}

The boundary power matrices $\vec L_p = \vec L_q^T$, will have reduced rank. The reason is that basis forms for interior effort degrees of freedom will be, in general, zero on the boundary. This is true e.\,g. for finite elements, see Section \ref{sec:example}, and also for the 1D geometric pseudo-spectral collocation method \cite{moulla2012pseudo}.

\subsubsection{Discrete boundary port variables}

To define the \emph{pairs of discrete boundary port variables} that will be assigned either the role of boundary controls or the role of outputs on the boundary subsets, we characterize  mappings on the spaces of discrete efforts variables.

\begin{dfn}
    The vectors of \emph{discrete boundary port variables}\footnote{Discrete boundary variables have index $b$, in contrast to index $\partial$ for the original distributed quantities.}  $\vec e^{b,\mu}, \vec f^{b,\mu}_0 \in \IR^{M_{b}^\mu}$ and $\hat{\vec e}^{b,\nu}, \hat{\vec f}^{b,\nu}_0 \in \IR^{\hat M_{b}^\nu}$,  associated with the boundary subdomains $\Gamma_\mu \subset \partial \Omega$, $\mu=1,\ldots, n_\Gamma$, $\hat{\Gamma}_\nu \subset \partial \Omega$, $\nu=1,\ldots,\hat{n}_\Gamma$, satisfy
    \begin{equation}
        \label{c04-eq:boundary-power-flow-decomposition}
        \langle \vec e^q | \vec L_q^\mu \vec e^p \rangle =:
        \langle \vec e^{b,\mu} | \vec f^{b,\mu}_0 \rangle,
        \qquad
        \langle \vec e^p | \hat{\vec L}_p^\nu \vec e^q \rangle =:
        \langle \hat{\vec e}^{b,\nu} | \hat{\vec f}^{b,\nu}_0 \rangle,
    \end{equation}
    i.\,e. their duality products (which are standard Euclidean scalar products on the finite-dimensional bond space) match the discrete expression of the power flow over $\Gamma_\mu$ and $\hat{\Gamma}_\nu$, respectively.
\end{dfn}

   We decompose the boundary power matrices for each boundary subdomain in matrix products
    \begin{equation}
        \label{c04-eq:decompositions-L}
        \vec L_q^\mu = (\vec T_{q}^\mu)^T \vec S_{p,0}^\mu, \qquad
        \hat{\vec L}_p^\nu =
        (\hat{\vec T}_{p}^\nu)^T \hat{\vec S}_{q,0}^\nu.
    \end{equation}
    The \emph{boundary trace matrices}\footnote{This denomination refers to the trace theorem for the extension of a $H^m$ function to the boundary.} $\vec T_{q}^\mu \in \IR^{M_{b}^\mu \times M_{q}}$, $\hat{\vec T}_{p}^\nu \in \IR^{\hat M_{b}^\nu \times M_{p}}$ define the effort degrees of freedom 
    \begin{equation}
        \label{c04-eq:discrete-boundary-efforts}
            \vec e^{b,\mu} = \vec T_{q}^\mu \vec e^q,
            \qquad
            \hat{\vec e}^{b,\nu} = \hat{\vec T}_{p}^\nu \vec e^p
    \end{equation}
    that lie on the boundary and are assigned the roles of \emph{input variables}. We call $\vec S_{p,0}^\mu \in \IR^{M_{b}^\mu \times M_{p}}$,  $\hat{\vec S}_{q,0}^\nu \in \IR^{\hat M_{b}^\nu \times M_{q}}$ the \emph{collocated boundary output} matrices. They define the boundary flow variables 
    \begin{equation}
        \label{c04-eq:discrete-boundary-flows}
        \vec f^{b,\mu}_0 = \vec S_{p,0}^\mu \vec e^p,
        \qquad
        \hat{\vec f}^{b,\nu}_0 = \hat{\vec S}_{q,0}^\nu \vec e^q,
    \end{equation}    
    which, together with the discrete efforts \eqref{c04-eq:discrete-boundary-efforts}, satisfy \emph{exactly} the discrete power balance \eqref{c04-eq:boundary-power-flow-decomposition} on the different portions of the boundary\footnote{The subscript $0$ indicates that these discrete output variables will be re-defined when we derive a PH state space model based on a (non-degenerate) Dirac structure.}.
Because of 
\begin{equation}
    \label{eq:consistent-boundary-power}
    \begin{split}
        \langle e^{\Gamma}_\mu | f^{\Gamma}_\mu \rangle_{\Gamma_\mu} =
        (-1)^p \langle e^q | e^p \rangle_{\Gamma_\mu} 
        &\approx
        (-1)^p \Big\langle \langle \vec e^q | \bs \varphi^q \rangle \Big| \langle \vec e^p | \bs \varphi^p \rangle \Big \rangle_{\Gamma_\mu} =
        \langle \vec e^q | \vec L_q^\mu \vec e^p \rangle = \langle \vec e^{b,\mu} | \vec f^{b,\mu}_0 \rangle,\\
        \langle \hat e^{\Gamma}_\nu | \hat f^{\Gamma}_\nu \rangle_{\hat \Gamma_\nu} =
        (-1)^p \langle e^p | e^q \rangle_{\hat\Gamma_{\nu}} 
        &\approx
        (-1)^p \Big\langle \langle \vec e^p | \bs \varphi^p \rangle \Big| \langle \vec e^q | \bs \varphi^q \rangle \Big \rangle_{\hat\Gamma_{\nu}} =
        \langle \vec e^p | \hat{\vec L}_p^\nu \vec e^q \rangle = \langle \hat{\vec e}^{b,\nu} | \hat{\vec f}^{b,\nu}_0 \rangle,
    \end{split}
\end{equation}
the definition of discrete boundary port variables is consistent with the distributed definition \eqref{c04-eq:power-continuity-weak-ports}. Summation over the individual boundary power matrices according to \eqref{c04-eq:definition-Lp-Lq}, yields a matrix equation that expresses the boundary power balance,
    \begin{equation}
        \label{c04-eq:relation-T-L-b}
        \vec T_{q}^T \vec S_{p,0} + \hat{\vec S}_{q,0}^T \hat{\vec T}_{p} =  \vec L_q = \vec L_p^T,
    \end{equation}
    where
    \begin{equation}
        \vec T_q  = \bmat{\vec T_{q}^1\\ \vdots \\ \vec T_{q}^{n_\Gamma}}, \quad
        \vec S_{p,0}  = \bmat{\vec S_{p,0}^1\\ \vdots \\ \vec S_{p,0}^{n_\Gamma}}, \qquad
        \hat{\vec S}_{q,0}  = \bmat{\hat{\vec S}_{q,0}^1\\ \vdots \\ \hat{\vec S}_{q,0}^{\hat n_\Gamma}}, \quad
        \hat{\vec T}_p  = \bmat{\hat{\vec T}_{p}^1\\ \vdots \\ \hat{\vec T}_{p}^{\hat n_\Gamma}}.
    \end{equation}  
The overall vectors of discrete boundary port variables comprise the contributions of each boundary subset with corresponding causality\footnote{The causality of a pair of port variables changes if the role of in- and output is permuted.},
\begin{equation}
    \label{c04-eq:discrete-boundary-maps}
    \vec e^b = \vec T_q \vec e^q, \quad \vec f^b_0 = \vec S_{p,0} \vec e^p, \qquad
    \hat{\vec e}^b = \hat{\vec T}_p \vec e^p, \quad \hat{\vec f}^b_0 = \hat{\vec S}_{q,0} \vec e^q,
\end{equation}
with
\begin{equation}
    \label{c04-eq:discrete-boundary-flows-efforts-vectors}
    \vec e^b = \bmat{ \vec e^{b,1}\\ \vdots \\ \vec e^{b,n_\Gamma}}, \quad    
    \vec f^b_0 = \bmat{ \vec f^{b,1}_0\\ \vdots \\ \vec f^{b,n_\Gamma}_0}, \qquad
    \hat{\vec e}^b = \bmat{ \vec e^{b,1}\\ \vdots \\ \vec e^{b,\hat n_\Gamma}}, \quad    
    \hat{\vec f}^b_0 = \bmat{ \vec f^{b,1}_0\\ \vdots \\ \vec f^{b,\hat n_\Gamma}_0}.
\end{equation}

\subsubsection{Power balance on the discrete bond space}
The vectors of discrete flows and efforts $\vec f^{p/q}$, $\vec e^{p/q}$ that satisfy \eqref{c04-eq:finit-dim-var-problem-matrix}, together with the discrete boundary ports of different causality, define a subset of the bond space 
    \begin{equation}
        \mathcal F \times \mathcal E =
               \IR^{N_p} \times \IR^{N_q} \times \IR^{M_{b}} \times \IR^{\hat M_{b}}
                \times
               \IR^{M_p} \times \IR^{M_q} \times \IR^{M_{b}} \times \IR^{\hat M_{b}},
    \end{equation}
with $M_b = \sum_{\mu=1}^{n_\Gamma} M_{b}^\mu$, $\hat{M}_b = \sum_{\nu=1}^{\hat n_\Gamma} \hat M_{b}^\nu$. On this subspace, a discrete power balance holds that approximates the continuous one \eqref{c04-eq:power-continuity-weak-ports}.

\begin{prop}
    The subspace
    \begin{equation}
    \label{eq:subspace-degenerate}
        D = \{ (\vec f^p, \vec f^q, \vec f^{b}_0, \hat{\vec f}^b_0,
        \vec e^p, \vec e^q, \vec e^b, \hat{\vec e}^b) \in \mathcal F \times \mathcal E
        \; | \;
        \eqref{c04-eq:finit-dim-var-problem-matrix}
        \textrm{ holds}
        \}
    \end{equation}
    with the boundary port variables defined by \eqref{c04-eq:discrete-boundary-efforts} and \eqref{c04-eq:discrete-boundary-flows} satisfies the isotropy condition $D \subset D^\perp$ with respect to the bilinear form $\langle\langle \cdot, \cdot \rangle\rangle_M$ that results from symmetrization of the duality product
    \begin{equation}
        \label{c04-eq:degenerate-dualtiy-product}
        \langle \cdot | \cdot \rangle_{M} := 
        \langle \vec e^p | \vec M_p \vec f^p \rangle +
        \langle \vec e^q | \vec M_q \vec f^q \rangle +
        \langle \vec e^b | \vec f^b_0 \rangle +
        \langle \hat{\vec e}^b | \hat{\vec f}^b_0 \rangle.
    \end{equation}
\end{prop}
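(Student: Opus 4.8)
The plan is to exploit that $D$ is a linear subspace: both the defining relations \eqref{c04-eq:finit-dim-var-problem-matrix} and the port definitions \eqref{c04-eq:discrete-boundary-maps} are linear in all the variables, so isotropy reduces to a single diagonal identity. Indeed, for the symmetrized form one has $\langle\langle x,x\rangle\rangle_M = \langle x|x\rangle_M$ along the diagonal, and polarization over $\IR$ gives $2\langle\langle x,y\rangle\rangle_M = \langle x+y|x+y\rangle_M - \langle x|x\rangle_M - \langle y|y\rangle_M$ for all $x,y\in D$. Hence it suffices to show that the quadratic form \eqref{c04-eq:degenerate-dualtiy-product} vanishes on every single element of $D$; the inclusion $D\subset D^\perp$ then follows immediately, since $\langle\langle x,y\rangle\rangle_M=0$ for all $x,y\in D$ is exactly the isotropy condition.

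To evaluate $\langle x|x\rangle_M$ for $x\in D$, I would first substitute the constraints \eqref{c04-eq:finit-dim-var-problem-matrix}, i.e. $\vec M_p\vec f^p = -(\vec K_p+\vec L_p)\vec e^q$ and $\vec M_q\vec f^q = -(\vec K_q+\vec L_q)\vec e^p$, into the first two summands of \eqref{c04-eq:degenerate-dualtiy-product}. This produces $-\langle\vec e^p|(\vec K_p+\vec L_p)\vec e^q\rangle - \langle\vec e^q|(\vec K_q+\vec L_q)\vec e^p\rangle$. Transposing the second pairing via $\langle\vec e^q|\vec A\vec e^p\rangle=\langle\vec A^T\vec e^q|\vec e^p\rangle$ and then invoking the identity \eqref{c04-eq:relations-K-L} collapses the two interior (stiffness) contributions into a single term $-\langle\vec e^p|\vec L_p\vec e^q\rangle = -\langle\vec e^q|\vec L_q\vec e^p\rangle$, where the last equality uses $\vec L_p=\vec L_q^T$.

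Finally I would convert this single interior term into the boundary pairings through the decomposition \eqref{c04-eq:relation-T-L-b}, namely $\vec L_q = \vec T_q^T\vec S_{p,0} + \hat{\vec S}_{q,0}^T\hat{\vec T}_p$. Expanding $\langle\vec e^q|\vec L_q\vec e^p\rangle$ and regrouping the transposes according to the port maps \eqref{c04-eq:discrete-boundary-maps} yields $\langle\vec T_q\vec e^q|\vec S_{p,0}\vec e^p\rangle + \langle\hat{\vec S}_{q,0}\vec e^q|\hat{\vec T}_p\vec e^p\rangle = \langle\vec e^b|\vec f^b_0\rangle + \langle\hat{\vec e}^b|\hat{\vec f}^b_0\rangle$. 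Substituting back, these exactly cancel the two boundary summands of \eqref{c04-eq:degenerate-dualtiy-product}, so $\langle x|x\rangle_M = 0$, which is the diagonal identity needed above.

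The computation is purely algebraic and involves no functional analysis; the only real work is the bookkeeping of signs and transpositions and the correct chaining of the two structural identities—the Lemma, which absorbs the interior contributions of $\vec K_p,\vec K_q$ into $\vec L_p$, and the boundary decomposition \eqref{c04-eq:relation-T-L-b}, which splits $\vec L_q$ into the trace and collocated-output matrices. The main (and rather minor) obstacle I anticipate is ensuring the transpositions $\langle\vec e^q|\vec A\vec e^p\rangle=\langle\vec A^T\vec e^q|\vec e^p\rangle$ are applied consistently, so that the Lemma and the decomposition align on the same quadratic monomial $\vec e^{q\,T}\vec L_q\vec e^p$; once they do, the cancellation is exact. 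Note that only isotropy $D\subset D^\perp$ is claimed here, so no maximality (dimension-counting) argument is required for this statement.
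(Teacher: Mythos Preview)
Your proof is correct and follows essentially the same approach as the paper: substitute the constraints \eqref{c04-eq:finit-dim-var-problem-matrix}, collapse the interior terms via the Lemma \eqref{c04-eq:relations-K-L}, and cancel against the boundary terms using the decomposition \eqref{c04-eq:relation-T-L-b}. The only (minor) difference is that the paper computes the symmetrized pairing directly on two distinct elements of $D$, whereas you reduce to the diagonal via polarization and linearity of $D$; this is a harmless shortcut that saves a little bookkeeping.
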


\begin{proof}
   The proposition generalizes Proposition 18 in \cite{moulla2012pseudo} and follows the same lines. We write out the symmetrized bilinear form, replacing \eqref{c04-eq:finit-dim-var-problem-matrix}:
   \begin{multline}
        \langle \vec e^p_1 | \vec M_p \vec f^p_2 \rangle +
        \langle \vec e^q_1 | \vec M_q \vec f^q_2 \rangle +
        \langle \vec e^b_1 | (\vec f^b_0)_2 \rangle +
        \langle \hat{\vec e}^b_1 | (\hat{\vec f}^b_0)_2 \rangle
        +
        \langle \vec e^p_2 | \vec M_p \vec f^p_1 \rangle +
        \langle \vec e^q_2 | \vec M_q \vec f^q_1 \rangle +
        \langle \vec e^b_2 | (\vec f^b_0)_1 \rangle +
        \langle \hat{\vec e}^b_2 | (\hat{\vec f}^b_0)_1 \rangle =\\
        - \langle \vec e_1^p | (\vec K_p + \vec L_p) \vec e_2^q \rangle 
        - \langle \vec e_2^q | (\vec K_q + \vec L_q) \vec e_1^p \rangle 
        - \langle \vec e_1^q | (\vec K_q + \vec L_q) \vec e_2^p \rangle 
        - \langle \vec e_2^p | (\vec K_p + \vec L_p) \vec e_1^q \rangle 
        \\
        + \langle \vec T_q \vec e_1^q | \vec S_{p,0} \vec e_2^p \rangle 
        + \langle \hat{\vec T}_p \vec e_1^p | \hat{\vec S}_{q,0} \vec e_2^q \rangle 
        + \langle \vec T_q \vec e_2^q | \vec S_{p,0} \vec e_1^p \rangle 
        + \langle \hat{\vec T}_p \vec e_2^p | \hat{\vec S}_{q,0} \vec e_1^q \rangle. \hspace{3cm}
   \end{multline}
    Exploiting the matrix equalities \eqref{c04-eq:relations-K-L} and \eqref{c04-eq:relation-T-L-b}, we obtain
    \begin{equation}
        - \langle \vec e_1^p | \vec L_p \vec e_2^q \rangle         
        - \langle \vec e_1^q | \vec L_q \vec e_2^p \rangle 
        + \langle \vec e_1^q | \vec L_q \vec e_2^p \rangle 
        + \langle \vec e_1^p | \vec L_p \vec e_2^q \rangle         
        = 0,
    \end{equation}
    which proves isotropy of $D$ with respect to $\langle\langle \cdot, \cdot \rangle\rangle_M$.
\end{proof}

The discrete power continuity equation, which represents the counterpart of \eqref{c04-eq:power-continuity-weak-ports} \emph{in the approximation subspaces}, finally reads
\begin{equation}
    \label{c04-eq:power-continuity-unreduced-b2}
    \langle \vec e^p| \vec M_p \vec f^p \rangle +
    \langle \vec e^q| \vec M_q \vec f^q \rangle +
    \langle \vec e^{b}| \vec f^{b}_0 \rangle +
    \langle \hat{\vec e}^{b}| \hat{\vec f}^{b}_0 \rangle = 0.
\end{equation}

The subspace \eqref{eq:subspace-degenerate} is, however, \emph{not} a Dirac structure, as the duality product $\langle \cdot | \cdot \rangle_M$  defined in \eqref{c04-eq:degenerate-dualtiy-product} is \emph{degenerate} in general. Its value can be zero for nonzero discrete flows and/or efforts that lie in the kernel of $\vec M_p$, $\vec M_q$, or their transposes. This motivates the introduction of \emph{power-preserving mappings} on the discrete bond space in Subsection \ref{subsec:projections}.

\begin{rmk}
The problem of a degenerate duality product does not appear in the approach according to \cite{farle2013porta}, which is based on a \emph{metric-dependent} Dirac structure. The parameters in the power-preserving maps represent however \emph{degrees of freedom to tune} the resulting numerical methods.
\end{rmk}

\subsubsection{Discrete conservation laws}
\label{c04-subsubsec:explicit-flow-effort}

Assume the matrices in the second terms of \eqref{c04-eq:finit-dim-var-problem-matrix} can be factorized as
\begin{equation}
\label{c04-eq:relations-MDKL}
\begin{split}
    \vec K_p + \vec L_p &=  - (-1)^r \vec M_p \vec d_p\\
    \vec K_q + \vec L_q &=  - \vec M_q \vec d_q.
\end{split}
\end{equation}
Then the set of linear equations that relates discrete flow and effort degrees of freedom has the form
\begin{equation}
	\label{c04-eq:discrete-conservation-laws}
    \bmat{\vec f^p\\ \vec f^q } =
    \bmat{ \vec 0 &  (-1)^r \vec d_p\\ \vec d_q & \vec 0}  
    \bmat{\vec e^p\\\vec e^q}.
\end{equation}
This is a \emph{direct discrete representation} of the two conservation laws with $\vec d_p \in \IR^{N_p\times M_q}$ and $\vec d_q \in \IR^{N_q\times M_p}$ \emph{discrete derivative matrices} that replace the exterior derivative in the distributed parameter setting. For a mixed FE approximation based on Whitney forms of lowest polynomial degree, see e.\,g. \cite{bossavit1998computational}, the representation \eqref{c04-eq:discrete-conservation-laws} is obtained by integrating only over the respective discrete, oriented geometric objects (volumes, faces or edges) on the discretization mesh instead of the whole domain $\Omega$. The matrices $\vec d_p$ and $\vec d_q$ are then the transposed incidence matrices\footnote{In order to avoid confusion with the actuated system \emph{boundary}, we use, as in \cite{seslija2014explicit} or \cite{schaft2011discrete}, the term \emph{incidence} matrix instead of \emph{boundary} matrix.}, which relate the geometric objects on the mesh. For some more comments on the direct discrete representation of conservation laws, see Section \ref{sec:example}.

\subsection{Power-preserving mappings and conjugated output maps}
\label{subsec:projections}

The discrete power balance \eqref{c04-eq:power-continuity-unreduced-b2} contains the duality pairings $\langle \vec e^p| \vec M_p \vec f^p \rangle$ and $\langle \vec e^q| \vec M_q \vec f^q \rangle$, which are \emph{degenerate} in general, i.\,e. the matrices $\vec M^p$ and $\vec M^q$ may be non-quadratic and have reduced rank, see Table \ref{c04-tab:ranks-matrices} for the example considered in Section \ref{sec:example}. We motivate the definition of \emph{power-preserving mappings} on the space of discrete bond variables by the following example.

\begin{exm}
    Consider the discrete power balance, a simplified representation of  \eqref{c04-eq:power-continuity-unreduced-b2},
    $\langle \vec e | \vec M \vec f \rangle  + \langle \vec e^b |  \vec f^b_0 \rangle = 0$
    with the \emph{degenerate} bilinear form $\langle \vec e | \vec M \vec f \rangle$. Let $\vec e \in \IR^{n_e}$, $\vec f \in \IR^{n_f}$, $n_f \neq n_e$ and the matrix $\vec M$ of reduced rank $r_M < \min(n_e, n_f)$.
    Now choose $r_M$ vectors $\vec e_i$ and $\vec f_i$, $i=1,\ldots, r_M$ such that the image spaces of $\vec M$ and $\vec M^T$ are spanned by
    \begin{equation}
    \begin{split}
        \textrm{span} \{ \vec M \vec f_1, \ldots, \vec M \vec f_{r_M} \} &=:
        \textrm{span} \{ \vec w_1, \ldots, \vec w_{r_M} \} = \textrm{im}(\vec M),\\
        \textrm{span} \{ \vec M^T \vec e_1, \ldots, \vec M^T \vec e_{r_M} \} &=:
        \textrm{span} \{ \vec v_1, \ldots, \vec v_{r_M} \} = \textrm{im}(\vec M^T).
    \end{split}
    \end{equation}
    Suppose that the matrix $\vec M$ can be decomposed as
    \begin{equation}
        \label{eq:factorization-M}
        \vec M = \vec P_e^T \vec P_f \qquad \text{with} \qquad
        \vec P_e = \bmat{ \vec w_1^T \\ \vdots\\ \vec w_{r_M}^T}, \quad
        \vec P_f = \bmat{ \vec v_1^T \\ \vdots\\ \vec v_{r_M}^T},
    \end{equation}
    then the degenerate bilinear form can be replaced by the \emph{non-degenerate} duality product $\langle \tilde{\vec e} | \tilde{\vec f} \rangle$ with $\tilde {\vec e} = \vec P_e \vec e$, $\tilde {\vec f} = \vec P_f \vec f$, and the discrete power balance becomes $\langle \tilde{\vec e} | \tilde{\vec f} \rangle  + \langle \vec e^b |  \vec f^b_0 \rangle = 0$. By the definition of the rows of $\vec P_e$ and $\vec P_f$, i.\,e. $\vec w_i^T = \vec f_i^T \vec M^T$ and $\vec v_i^T = \vec e_i^T \vec M$,  it is easy to see that $\vec P_{e} \vec e = \vec 0$ for $\vec e \in \ker(\vec M^T)$ and $\vec P_{f} \vec f = \vec 0$ for $\vec f \in \ker(\vec M)$. This means that $\vec P_e$ and $\vec P_f$ describe mappings from the quotient spaces $\IR^{n_e} / \ker(\vec M^T)$ and $\IR^{n_f} / \ker(\vec M)$ to $\IR^{r_M}$, which map the equivalence classes\footnote{The maps from $\IR^{n_e}$ and $\IR^{n_f}$ to the quotient spaces are \emph{projections}.}
    \begin{equation}
        [\vec e] = \{ \vec e' \in \IR^{n_e} \,|\, \exists \vec e'' \in \ker(\vec M^T), \; \vec e' = \vec e + \vec e'' \}
        \quad
        \text{and}
        \quad 
        [\vec f] =
        \{ \vec f' \in \IR^{n_f} \,|\, \exists \vec f'' \in \ker(\vec M), \; \vec f' = \vec f + \vec f'' \}
    \end{equation}
    onto an embedding of $\IR^{n_e} \times \IR^{n_f}$, endowed with coordinates $(\tilde{\vec e}, \tilde{\vec f})$. We call $\tilde{\vec e}, \tilde{\vec f} \in \IR^{\tilde N}$ \emph{minimal} discrete power variables with $\tilde N = r_M$ in the considered case.

    If no factorization \eqref{eq:factorization-M} exists -- this is the case if the dimension of the minimal bond variables is lower than the rank of $\vec M$, $\tilde N < r_M$ --  the ``internal'' power term $\langle \vec e | \vec M \vec f \rangle$ can not be matched with $\langle \tilde{\vec e} | \tilde {\vec f} \rangle$. Preservation of the total discrete power balance will in such a case be achieved by an appropriate redefinition of the output $\vec f^b_0 \rightarrow \vec f^b$ such that $\langle \tilde{\vec e} | \tilde{\vec f} \rangle  + \langle \vec e^b |  \vec f^b \rangle = 0$ holds, see the following paragraph. For an illustration, consider Example \ref{ex:2x1-grid-new}: The original output vector $\hat{\vec f}^b_0$ does \emph{not} contain the rotational components contained in $\hat{\vec f}^b$ as depicted in Fig. \ref{fig:fb-hat}.
\end{exm}

We use the argumentation sketched above to construct a \emph{Dirac structure} on a \emph{minimal}  discrete bond space. To replace $\langle \vec e^p| \vec M_p \vec f^p \rangle$ and $\langle \vec e^q| \vec M_q \vec f^q \rangle$ in  \eqref{c04-eq:power-continuity-unreduced-b2} by \emph{non-degenerate} duality pairings, we determine \emph{power-preserving mappings}
\begin{equation}
    \label{c04-eq:effort-flow-maps} 
    \tilde{\vec e}^p = \vec P_{ep} \vec e^p, \quad
    \tilde{\vec e}^q = \vec P_{eq} \vec e^q \qquad \text{and} \qquad
    \tilde{\vec f}^p = \vec P_{fp} \vec f^p, \quad
    \tilde{\vec f}^q = \vec P_{fq} \vec f^q,
\end{equation}
such that
\begin{equation}
    \tilde{N}_p := \dim \tilde{\vec e}^p = \dim \tilde{\vec f}^p \leq \mathrm{rank} (\vec M_p) 
    \qquad \text{and} \qquad
    \tilde{N}_q := \dim \tilde{\vec e}^q = \dim \tilde{\vec f}^q \leq \mathrm{rank} (\vec M_q).
\end{equation}
We refer to the vectors $\tilde{\vec f}^p,\tilde{\vec e}^p \in \IR^{\tilde N_p}$, $\tilde{\vec f}^q,\tilde{\vec e}^q \in \IR^{\tilde N_q}$ as \emph{minimal} discrete flows and efforts, as they can be interpreted as coordinates of an embedding in the original discrete bond space.

\begin{exm}
    In the 1D case, $p=q=1$, using Whitney finite elements or the pseudo-spectral method \cite{moulla2012pseudo}, we have, $N = N_p = N_q$ and $M = M_p = M_q$ with $M = N+1$. Fixing $\tilde{\vec f}^p = \vec f^p$, $\tilde{\vec f}^q = \vec f^q$, minimal discrete efforts can be defined as $\tilde{\vec e}^p = \vec M_p^T \vec e^p$ and  $\tilde{\vec e}^q = \vec M_q^T \vec e^q$. 
\end{exm}

The following definition summarizes the core property of power-preserving mappings.

\begin{dfn}
    The discrete flow and effort mappings  \eqref{c04-eq:effort-flow-maps} are \emph{power-preserving} if they satisfy a discrete power balance
    \begin{equation}
        \label{c04-eq:power-continuity-projected}
        \langle \tilde{\vec e}^p| \tilde{\vec f}^p \rangle +
        \langle \tilde{\vec e}^q| \tilde{\vec f}^q \rangle +
        \langle \vec e^b| \vec f^b \rangle +
        \langle \hat{\vec e}^b| \hat{\vec f}^b \rangle
        = 0
    \end{equation}    
    with the given boundary inputs $\vec e^b$, $\hat{\vec e}^b$ according to \eqref{c04-eq:discrete-boundary-efforts} and possibly modified boundary outputs 
    \begin{equation}
        \label{c04-eq:discrete-boundary-flows-modified}
        \vec f^{b} = \vec S_{p}^\mu \vec e^p,
        \qquad
        \hat{\vec f}^{b} = \hat{\vec S}_{q}^\nu \vec e^q.
    \end{equation}  
\end{dfn}

\begin{rmk}
    If the mappings satisfy $\vec P_{ep}^T \vec P_{fp} = \vec M_p$ and $\vec P_{eq}^T \vec P_{fq} = \vec M_q$, the ``interior'' part of the power balance \eqref{c04-eq:power-continuity-unreduced-b2} is exactly represented by the minimal flows $\tilde{\vec f}$ and efforts $\tilde{\vec e}$, and \eqref{c04-eq:power-continuity-projected} holds with the original, collocated outputs $\vec f_0^b$, $\hat{\vec f}_0^b$. If, however, $\tilde N_q < \mathrm{rank}(\vec M_q)$ and/or $\tilde N_p < \mathrm{rank}(\vec M_p)$, a part of the power, originally described by $\langle \vec e^p| \vec M_p \vec f^p \rangle + \langle \vec e^q| \vec M_q \vec f^q \rangle$, must be ``swapped'' to the boundary terms of \eqref{c04-eq:power-continuity-projected} via the re-definition of the outputs. This way, the power-balance is maintained globally, and \emph{conservativeness} of the finite-dimensional approximation is \emph{guaranteed}.
\end{rmk}

To characterize the power-preserving mappings and modified output maps that guarantee power continuity  \eqref{c04-eq:power-continuity-projected}, we substitute in this equation the definitions of the effort and flow maps, the in- and outputs, and substitute $\vec f^p$, $\vec f^q$ according to the discrete representation \eqref{c04-eq:discrete-conservation-laws} of the conservation laws. The new power variables are now expressed in terms of the original discrete efforts,
\begin{equation}
	\label{c04-eq:image-representation}
	\begin{split}
		\underbrace{\bmat{\tilde{\vec f}^p\\ \tilde{\vec f}^q\\ \hat{\vec f}^b \\ \vec f^b }}_{\bar{\vec f}} =
		\underbrace{\bmat{\vec 0 &  (-1)^r \vec P_{fp} \vec d_p\\
			\vec P_{fq} \vec d_q & \vec 0\\
			\vec 0 & \hat{\vec S}_q\\
   			\vec S_p & \vec 0}}_{\vec E^T}
		\underbrace{\bmat{\vec e^p\\ \vec e^q}}_{\vec e}, \qquad 
		\underbrace{\bmat{\tilde {\vec e}^p\\ \tilde {\vec e}^q\\ \hat{\vec e}^b\\ \vec e^b}}_{\bar{\vec e}} =
		\underbrace{\bmat{\vec P_{ep} & \vec 0\\
			\vec 0 & \vec P_{eq}\\
			\hat{\vec T}_p & \vec 0\\
			\vec 0 &\vec T_q }}_{\vec F^T}
		\underbrace{\bmat{\vec e^p\\ \vec e^q}}_{\vec e}.
	\end{split}
\end{equation}
Equation \eqref{c04-eq:power-continuity-projected} must hold for arbitrary $\vec e^p$, $\vec e^q$, and we obtain the following matrix condition.

\begin{prop}
    The effort, flow and output maps are \emph{power-preserving}, if they satisfy the matrix equation
    \begin{equation}
    \label{c04-eq:charact-projections}
		 (-1)^r \vec d_p^T \vec P_{fp}^T \vec P_{ep}  +
		 \vec P_{eq}^T \vec P_{fq} \vec d_q +  
         \vec T_{q}^T \vec S_{p} + \hat{\vec S}_{q}^T \hat{\vec T}_{p} = \vec 0.
	\end{equation}
\end{prop}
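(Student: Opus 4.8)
The plan is to substitute the image representation \eqref{c04-eq:image-representation} of the minimal power variables and boundary ports directly into the projected power balance \eqref{c04-eq:power-continuity-projected}, reduce everything to a single bilinear form in the free discrete efforts $\vec e^p$, $\vec e^q$, and read off the matrix identity \eqref{c04-eq:charact-projections} from the requirement that this form vanishes identically.

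First I would expand the four scalar products in \eqref{c04-eq:power-continuity-projected} using $\langle \vec a | \vec B \vec c \rangle = \vec a^T \vec B \vec c$. The flow-side variables $\tilde{\vec f}^p$, $\tilde{\vec f}^q$ are written in terms of efforts by inserting the discrete conservation laws \eqref{c04-eq:discrete-conservation-laws}, namely $\vec f^p = (-1)^r \vec d_p \vec e^q$ and $\vec f^q = \vec d_q \vec e^p$, together with the flow maps $\tilde{\vec f}^p = \vec P_{fp}\vec f^p$, $\tilde{\vec f}^q = \vec P_{fq}\vec f^q$ from \eqref{c04-eq:effort-flow-maps}; the effort-side and boundary variables come from \eqref{c04-eq:effort-flow-maps}, \eqref{c04-eq:discrete-boundary-efforts} and \eqref{c04-eq:discrete-boundary-flows-modified}. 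This produces the four quadratic expressions $(-1)^r (\vec e^p)^T \vec P_{ep}^T \vec P_{fp}\vec d_p \vec e^q$, $(\vec e^q)^T \vec P_{eq}^T \vec P_{fq}\vec d_q\vec e^p$, $(\vec e^q)^T \vec T_q^T \vec S_p \vec e^p$ and $(\vec e^p)^T \hat{\vec T}_p^T \hat{\vec S}_q \vec e^q$.

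Next I would bring all four terms to a common shape. Since each term is a scalar it equals its own transpose, so I transpose the two terms that appear in the form $(\vec e^p)^T(\cdots)\vec e^q$ to rewrite them as $(\vec e^q)^T(\cdots)\vec e^p$. Collecting the four matrices into a single bracket then gives
\[
(\vec e^q)^T \left[ (-1)^r \vec d_p^T \vec P_{fp}^T \vec P_{ep} + \vec P_{eq}^T \vec P_{fq} \vec d_q + \vec T_q^T \vec S_p + \hat{\vec S}_q^T \hat{\vec T}_p \right] \vec e^p = 0.
\]
Since \eqref{c04-eq:power-continuity-projected} is required to hold for arbitrary and \emph{independent} $\vec e^p$ and $\vec e^q$, and a bilinear form $(\vec e^q)^T \vec A \vec e^p$ vanishes for all such arguments only when $\vec A = \vec 0$, the bracket must be the zero matrix, which is precisely \eqref{c04-eq:charact-projections}.

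Equivalently, and perhaps more transparently, I could work with the stacked effort vector $\vec e$ and the matrices $\vec E$, $\vec F$ introduced in \eqref{c04-eq:image-representation}: then \eqref{c04-eq:power-continuity-projected} reads $\vec e^T \vec F \vec E^T \vec e = 0$ for all $\vec e$, and a block computation shows that the diagonal blocks of $\vec F \vec E^T$ vanish automatically, while its off-diagonal blocks, combined through the independence of $\vec e^p$ and $\vec e^q$, reproduce \eqref{c04-eq:charact-projections}. I do not expect any genuine obstacle here; the argument is a routine substitution. The only points that require care are the placement of transposes when unifying the four bilinear forms and the correct propagation of the sign $(-1)^r$ coming from the discrete conservation law for $\vec f^p$; one should also note explicitly that the conclusion relies on the \emph{full} independence of $\vec e^p$ and $\vec e^q$, so that the entire matrix vanishes rather than merely its symmetric part.
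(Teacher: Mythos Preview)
Your proposal is correct and matches the paper's approach exactly: the paper derives the proposition by substituting the image representation \eqref{c04-eq:image-representation} into the projected power balance \eqref{c04-eq:power-continuity-projected} and then invoking arbitrariness of $\vec e^p$, $\vec e^q$ to obtain the matrix condition. Your write-up is in fact more explicit than the paper's, which compresses the derivation into the sentence preceding the proposition rather than giving a formal proof.
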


The power-preserving maps are not unique. Different parametrizations of the matrices yield different finite-dimensional Dirac structures that approximate the original Stokes-Dirac structure of Proposition \ref{c04-prop:stokes-dirac-general-boundary-ports}. Together with a consistent approximation of the constitutive equations, we obtain PH approximate models with different numerical properties. A favorable parametrization will depend on the nature of the system (e.\,g. if the closure equations make the system hyperbolic or parabolic), the distribution and type of boundary inputs, and the application case. In any case, the power-preserving maps generate a \emph{minimal} space of power variables on which an approximate Dirac structure is defined.

In Section \ref{sec:example}, we will illustrate the construction of the power-preserving maps on the example of Whitney approximation forms on a rectangular simplicial mesh in 2D. The degrees of freedom in the mappings will allow for a trade-off between  centered schemes and upwinding in the discretized PH models.

\begin{rmk}
    Equation \eqref{c04-eq:charact-projections} relates the ``discrete differentiation matrices'' $\vec d_p$, $\vec d_q$ and the ``discrete trace matrices'' $\vec T_q$, $\hat{\vec T}_p$, paired with $\vec S_p$, $\hat{\vec S}_q$. This is an apparent reference to Stokes' theorem \eqref{c04-eq:stokes-theorem}, which is instrumental in deriving this discrete representation of power continuity (see also Eq. (43) in \cite{moulla2012pseudo}).
\end{rmk}

\subsection{Dirac structure on the minimal bond space}

The power-preserving maps that satisfy \eqref{c04-eq:charact-projections} define a \emph{Dirac structure}. We verify that \eqref{c04-eq:image-representation} is an \emph{image representation} of this Dirac structure on the minimal discrete bond space. If the effort maps are invertible, an unconstrained input-output representation exists.

\begin{prop}[Image representation]
\label{c04-prop:image-representation}
    Consider the discrete flow and effort vectors $\bar{\vec f}$ and $\bar{\vec e}$ as indicated in \eqref{c04-eq:image-representation}. $(\bar{\vec f}, \bar{\vec e})$ is an element of the 
    bond space 
    \begin{equation}
    \bar{\mathcal F} \times \bar{\mathcal E} = \IR^{\tilde N_p + \tilde N_q + M_b + \hat M_b} \times \IR^{\tilde N_p + \tilde N_q + M_b + \hat M_b}.
    \end{equation}
    Let $\tilde N_p + \tilde N_q + \hat{M}_b + M_b = M_p + M_q$ and assume that the matrix condition \eqref{c04-eq:charact-projections} is satisfied. If 
    \begin{equation}
    \label{c04-eq:rank-condition-P-T}
        \mathrm{rank} (\bmat{\vec P_{ep}\\ \hat{\vec T}_p} ) = M_p
        \qquad \textrm{and} \qquad
        \mathrm{rank} (\bmat{\vec P_{eq}\\ {\vec T}_q} ) = M_q,
    \end{equation}
    then the subspace 
    \begin{equation}
        \label{c04-eq:subspace-tilde-D}
        \bar{D} = \{ (\bar{\vec f}, \bar{\vec e}) \in \bar{\mathcal F}\times \bar{\mathcal E} \; |\; \bar{\vec f} = \vec E^T \vec{e}, \; \bar{\vec e} = \vec F^T \vec{e}, \; \vec{e} \in \IR^{M^p + M^q} \}
    \end{equation}
    is a Dirac structure.
\end{prop}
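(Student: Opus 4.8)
The plan is to verify the two defining properties of a Dirac structure from Definition~\ref{c04-def:fin-dim-dirac-structure} directly for $\bar D$: isotropy under $\langle\langle\cdot,\cdot\rangle\rangle$, and maximality. Throughout I write $N := M_p + M_q$ and use the standing hypothesis $\tilde N_p + \tilde N_q + \hat M_b + M_b = M_p + M_q = N$, so that $\bar{\mathcal F} = \bar{\mathcal E} = \IR^N$ and the symmetrized pairing is represented on $\bar{\mathcal F}\times\bar{\mathcal E}$ by $\tfrac12 \bmat{\vec 0 & \vec I\\ \vec I & \vec 0}$, a \emph{nondegenerate} symmetric form of signature $(N,N)$. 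The argument then reduces to showing that $\bar D$ is isotropic and has dimension exactly $N$, which for such a form already forces maximality.

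First I would establish isotropy. For two elements of $\bar D$ with parameters $\vec e_1, \vec e_2 \in \IR^N$, i.e. $\bar{\vec f}_i = \vec E^T \vec e_i$ and $\bar{\vec e}_i = \vec F^T \vec e_i$ as in \eqref{c04-eq:image-representation}, a direct substitution into the bilinear form gives
\[
    \langle\langle (\bar{\vec f}_1, \bar{\vec e}_1), (\bar{\vec f}_2, \bar{\vec e}_2)\rangle\rangle = \tfrac12\, \vec e_1^T \big(\vec F \vec E^T + \vec E \vec F^T\big)\,\vec e_2,
\]
so that $\bar D$ is isotropic if and only if $\vec F \vec E^T + \vec E \vec F^T = \vec 0$. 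Carrying out the block multiplication of $\vec F$ and $\vec E^T$ from \eqref{c04-eq:image-representation}, the two diagonal blocks vanish, and the $(1,2)$ block of $\vec F\vec E^T + \vec E\vec F^T$ equals
\[
    (-1)^r \vec P_{ep}^T \vec P_{fp}\vec d_p + \hat{\vec T}_p^T \hat{\vec S}_q + \vec d_q^T \vec P_{fq}^T \vec P_{eq} + \vec S_p^T \vec T_q,
\]
which is precisely the transpose of the characterizing condition \eqref{c04-eq:charact-projections}; the $(2,1)$ block is the transpose of this one. Since a matrix vanishes iff its transpose does, the hypothesis \eqref{c04-eq:charact-projections} is \emph{exactly} the isotropy identity, and thus $\bar D \subset \bar D^\perp$.

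Next I would pin down the dimension of $\bar D$. Since $\bar D$ is the image of the linear map $\vec e \mapsto (\vec E^T \vec e,\, \vec F^T \vec e) = \bmat{\vec E^T\\ \vec F^T}\vec e$, one has $\dim \bar D = \mathrm{rank}\,\bmat{\vec E^T\\ \vec F^T}$, and it suffices to show this map is injective. Writing $\vec e = (\vec e^p, \vec e^q)$, the $\vec F^T$-component alone reads $\vec F^T \vec e = (\vec P_{ep}\vec e^p,\, \vec P_{eq}\vec e^q,\, \hat{\vec T}_p \vec e^p,\, \vec T_q \vec e^q)$, which vanishes only if $\bmat{\vec P_{ep}\\ \hat{\vec T}_p}\vec e^p = \vec 0$ and $\bmat{\vec P_{eq}\\ \vec T_q}\vec e^q = \vec 0$; by the rank conditions \eqref{c04-eq:rank-condition-P-T} these stacked matrices have full column rank $M_p$ and $M_q$, forcing $\vec e^p = \vec 0$ and $\vec e^q = \vec 0$. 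Hence the map is already injective through its $\vec F^T$-part, so $\dim \bar D = N$.

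Finally I would combine the two facts. Because the pairing on $\bar{\mathcal F}\times\bar{\mathcal E}$ is nondegenerate, $\dim \bar D^\perp = 2N - \dim \bar D = N$; together with the isotropy inclusion $\bar D \subset \bar D^\perp$ and $\dim \bar D = N = \dim \bar D^\perp$, this yields $\bar D = \bar D^\perp$, i.e. $\bar D$ is maximally isotropic and therefore a Dirac structure in the sense of Definition~\ref{c04-def:fin-dim-dirac-structure}. The only genuinely computational step is the block verification that \eqref{c04-eq:charact-projections} coincides with $\vec F\vec E^T + \vec E\vec F^T = \vec 0$; I expect the sign and transpose bookkeeping in the off-diagonal block (and matching it against the transpose of \eqref{c04-eq:charact-projections}) to be the point requiring the most care, whereas the injectivity count and the passage from ``isotropic of half dimension'' to ``Dirac'' are routine.
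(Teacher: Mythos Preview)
Your proof is correct and its computational core coincides with the paper's: both verify the rank condition on $\vec F$ via \eqref{c04-eq:rank-condition-P-T} and the identity $\vec E\vec F^T + \vec F\vec E^T = \vec 0$ via the block calculation that reduces to \eqref{c04-eq:charact-projections}. The only difference is in packaging: the paper invokes the known characterization of an \emph{image representation} of a Dirac structure (from \cite{schaft2000l2}, Section~4.4.1), for which these two conditions are sufficient, whereas you work directly from Definition~\ref{c04-def:fin-dim-dirac-structure}, proving isotropy and then using the dimension count $\dim\bar D = N = \dim\bar D^\perp$ to conclude maximality. Your route is slightly more self-contained (no external citation needed), while the paper's is shorter by delegating the ``isotropic of half dimension $\Rightarrow$ Dirac'' step to the literature; substantively the arguments are the same.
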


\begin{proof}
    According to the definition of the image representation of a Dirac structure (see e.\,g. \cite{schaft2000l2}, Section 4.4.1), the dimensions of $\bar{\vec f}$ and $\bar{\vec e}$ must be less\footnote{This is the case of a \emph{relaxed} image representation.} or equal $\dim(\vec e)$, which is ensured by $\tilde N_p + \tilde N_q + \hat{M}_b + M_b = M_p + M_q$. The condition  $\mathrm{rank} ( [ \vec F \;\; \vec E] ) = M_p + M_q$ is satisfied by \eqref{c04-eq:rank-condition-P-T}, from which $\mathrm{rank}( \vec F ) = M_p + M_q$ follows. Moreover, the skew-symmetry condition $\vec E \vec F^T + \vec F \vec E^T = \vec 0$ must hold. $\vec E \vec F^T + \vec F \vec E^T$ according to \eqref{c04-eq:image-representation} gives
    \begin{equation}
        \bmat{
            \vec 0 &
            \vec d_q^T \vec P_{fq}^T \vec P_{eq} + (-1)^r \vec P_{ep}^T \vec P_{fp} \vec d_p + \vec S_p^T \vec T_q + \hat{\vec S}_p^T \hat{\vec T}_q\\
            (-1)^r \vec d_p^T \vec P_{fp}^T \vec P_{ep} + \vec P_{eq}^T \vec P_{fq} \vec d_q + \vec T_q^T \vec S_p + \hat{\vec S}_q^T \hat{\vec T}_p  &
            \vec 0,
        }
    \end{equation}    
    which equals zero as the matrix equation \eqref{c04-eq:charact-projections} holds.
\end{proof}

\begin{cor}[Input-output representation]
    Under the conditions of Proposition \ref{c04-prop:image-representation}, the Dirac structure admits an \emph{unconstrained input-output representation}
    \begin{equation}
    \begin{split}
        \label{c04-eq:Dirac-structure-io}
        \bmat{-\tilde{\vec f}^p\\ -\tilde{\vec f}^q}
        &=
        \underbrace{\bmat{ \vec 0 & \vec J_p\\ \vec J_q & \vec 0 }}_{\vec J = - \vec J^T}
        \bmat{\tilde{\vec e}^p \\ \tilde{\vec e}^q}
        +
        \underbrace{\bmat{ \vec 0 & \vec B_p\\ \vec B_q & \vec 0 }}_{\vec B}
        \bmat{\hat{\vec e}^b\\ \vec e^b},\\
        \bmat{\hat{\vec f}^b\\ \vec f^b}
        &=
        \underbrace{\bmat{ \vec 0 & \vec C_q\\ \vec C_p & \vec 0 }}_{\vec C = \vec B^T}
        \bmat{\tilde{\vec e}^p \\ \tilde{\vec e}^q}
        +
        \underbrace{\bmat{ \vec 0 & \vec D_q\\ \vec D_p & \vec 0 }}_{\vec D = - \vec D^T}
        \bmat{\hat{\vec e}^b\\ \vec e^b},
        \end{split}
    \end{equation}
    with
    \begin{equation}
        \label{c04-eq:skew-symmetry-io-repr}
        \vec J_p = - \vec J_q^T, \quad \vec B_p = \vec C_p^T, \; \vec B_q = \vec C_q^T, \quad \vec D_q = - \vec D_p^T.
    \end{equation}
\end{cor}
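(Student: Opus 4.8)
The plan is to obtain the input-output form \eqref{c04-eq:Dirac-structure-io} as a mere \emph{reparametrization} of the image representation \eqref{c04-eq:image-representation} established in Proposition \ref{c04-prop:image-representation}: I take the minimal and boundary efforts $(\tilde{\vec e}^p, \tilde{\vec e}^q, \hat{\vec e}^b, \vec e^b)$ as independent coordinates and eliminate $\vec e = (\vec e^p, \vec e^q)$. The whole construction hinges on the combined effort map $\vec e \mapsto \bar{\vec e} = \vec F^T \vec e$ being invertible. The crucial structural observation is that this map \emph{decouples} into the two blocks $\bmat{\tilde{\vec e}^p \\ \hat{\vec e}^b} = \bmat{\vec P_{ep}\\ \hat{\vec T}_p}\, \vec e^p$ and $\bmat{\tilde{\vec e}^q \\ \vec e^b} = \bmat{\vec P_{eq}\\ \vec T_q}\, \vec e^q$, so invertibility of $\vec F^T$ is equivalent to both stacked matrices being square and of full rank.

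The first step, which I expect to be the main (though short) obstacle, is precisely to prove this invertibility from the stated hypotheses, since the rank condition \eqref{c04-eq:rank-condition-P-T} only guarantees full \emph{column} rank, not squareness. The resolution is a dimension count. The rank conditions force $\tilde N_p + \hat M_b \geq M_p$ and $\tilde N_q + M_b \geq M_q$ (a matrix cannot have more rank than rows); summing these and comparing with the global dimension equality $\tilde N_p + \tilde N_q + \hat M_b + M_b = M_p + M_q$ assumed in Proposition \ref{c04-prop:image-representation} forces both inequalities to be equalities, i.e.\ $\tilde N_p + \hat M_b = M_p$ and $\tilde N_q + M_b = M_q$. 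Hence each stacked matrix is a square matrix of full rank and therefore invertible; this is exactly the ``if the effort maps are invertible'' proviso, now discharged from the hypotheses rather than assumed separately.

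With invertibility in hand I write $\vec e^p = \vec R_p \tilde{\vec e}^p + \vec U_p \hat{\vec e}^b$ and $\vec e^q = \vec R_q \tilde{\vec e}^q + \vec U_q \vec e^b$, where $\bmat{\vec R_p & \vec U_p} := \bmat{\vec P_{ep}\\ \hat{\vec T}_p}^{-1}$ and $\bmat{\vec R_q & \vec U_q} := \bmat{\vec P_{eq}\\ \vec T_q}^{-1}$ are partitioned according to the blocks $(\tilde{\vec e}^p,\hat{\vec e}^b)$ and $(\tilde{\vec e}^q, \vec e^b)$. Substituting into the flow rows of \eqref{c04-eq:image-representation} immediately produces the skew-block pattern: $\tilde{\vec f}^p = (-1)^r \vec P_{fp}\vec d_p\,\vec e^q$ and $\hat{\vec f}^b = \hat{\vec S}_q\,\vec e^q$ depend only on $(\tilde{\vec e}^q,\vec e^b)$, while $\tilde{\vec f}^q = \vec P_{fq}\vec d_q\,\vec e^p$ and $\vec f^b = \vec S_p\,\vec e^p$ depend only on $(\tilde{\vec e}^p,\hat{\vec e}^b)$. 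Reading off coefficients yields the off-diagonal blocks $\vec J$, $\vec B$, $\vec C$, $\vec D$ of \eqref{c04-eq:Dirac-structure-io}, with each sub-block given explicitly as a product of an image-representation flow matrix with the appropriate block of the inverse effort map (for instance $\vec J_p = -(-1)^r\vec P_{fp}\vec d_p\vec R_q$ and $\vec B_p = -(-1)^r\vec P_{fp}\vec d_p\vec U_q$), which I would not grind out in full.

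It remains to verify the symmetry relations \eqref{c04-eq:skew-symmetry-io-repr}. The cleanest route is \emph{not} a direct computation with the inverse blocks but the power balance \eqref{c04-eq:power-continuity-projected}, which holds because $\bar D$ is a Dirac structure (equivalently because \eqref{c04-eq:charact-projections} is satisfied). Inserting the input-output relations into \eqref{c04-eq:power-continuity-projected} and demanding that it vanish for \emph{arbitrary} $(\tilde{\vec e}^p, \tilde{\vec e}^q, \hat{\vec e}^b, \vec e^b)$, I collect the resulting bilinear terms by variable pair: the $(\tilde{\vec e}^p,\tilde{\vec e}^q)$ terms give $\vec J_p + \vec J_q^T = \vec 0$, the $(\tilde{\vec e}^p,\vec e^b)$ and $(\tilde{\vec e}^q,\hat{\vec e}^b)$ terms give $\vec B_p = \vec C_p^T$ and $\vec B_q = \vec C_q^T$, and the $(\hat{\vec e}^b,\vec e^b)$ terms give $\vec D_q + \vec D_p^T = \vec 0$. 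These are exactly \eqref{c04-eq:skew-symmetry-io-repr}, i.e.\ $\vec J = -\vec J^T$, $\vec C = \vec B^T$, $\vec D = -\vec D^T$, completing the proof. (As a consistency check one could instead extract the same identities from the skew-symmetry $\vec E\vec F^T + \vec F\vec E^T = \vec 0$ used in Proposition \ref{c04-prop:image-representation}, but the power-balance argument is markedly shorter and reuses machinery already in place.)
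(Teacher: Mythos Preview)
Your proof is correct and follows essentially the same route as the paper: invert the stacked effort maps $\bmat{\vec P_{ep}\\ \hat{\vec T}_p}$ and $\bmat{\vec P_{eq}\\ \vec T_q}$, substitute into the flow rows of the image representation \eqref{c04-eq:image-representation} to read off the blocks of \eqref{c04-eq:Dirac-structure-io}, and then verify the (skew-)symmetry relations \eqref{c04-eq:skew-symmetry-io-repr}. Two minor differences are worth noting. First, you make explicit the dimension count showing that the rank conditions \eqref{c04-eq:rank-condition-P-T} together with the global equality $\tilde N_p + \tilde N_q + \hat M_b + M_b = M_p + M_q$ force each stacked matrix to be \emph{square}; the paper simply asserts invertibility and only comments on squareness after the proof. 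Second, for the verification of \eqref{c04-eq:skew-symmetry-io-repr} the paper substitutes the explicit block formulas \eqref{eq:matrices-state-space-model} into the matrix identity $\vec J + \vec J^T = \vec 0$ etc., and left-/right-multiplies by the nonsingular stacked matrices to reduce to \eqref{c04-eq:charact-projections}; you instead insert the input-output relations into the scalar power balance \eqref{c04-eq:power-continuity-projected} and match bilinear coefficients. These are two sides of the same coin (the power balance is precisely the quadratic form associated to $\vec E\vec F^T + \vec F\vec E^T$), and your parenthetical remark already acknowledges this equivalence.
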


\begin{proof}
    The (skew-)symmetry conditions can be summarized as
    \begin{equation}
        \label{c04-eq:skey-symm-proof}
        \bmat{ - \vec J_p & - \vec B_p\\ \vec C_q & \vec D_q } +
        \bmat{ - \vec J_q & - \vec B_q\\ \vec C_p & \vec D_p }^T
        =
        \bmat{ \vec 0 & \vec 0 \\ \vec 0 & \vec 0}.
    \end{equation}
    The submatrices in Eq. \eqref{c04-eq:Dirac-structure-io} are obtained from evaluation of $\bar{\vec f}$ in \eqref{c04-eq:image-representation} and exploiting   invertibility of the matrices in \eqref{c04-eq:rank-condition-P-T}. We can write 
    \begin{equation}
        \label{eq:matrices-state-space-model}
        \bmat{ - \vec J_p & - \vec B_p\\ \vec C_q & \vec D_q } =
        \bmat{ (-1)^r \vec P_{fp} \vec d_p\\ \hat{\vec S}_q }
        \bmat{\vec P_{eq}\\ {\vec T}_q}^{-1}, \quad
        \bmat{ - \vec J_q & - \vec B_q\\ \vec C_p & \vec D_p } 
        =
        \bmat{\vec P_{fq} \vec d_q\\ \vec S_p}
        \bmat{\vec P_{ep}\\ \hat{\vec T}_p} ^{-1}.
    \end{equation}
    Substituting these relations in \eqref{c04-eq:skey-symm-proof} and multiplying with the non-singular matrices $\bmat{\vec P_{ep}^T & \hat{\vec T}_p^T}$ from the left and $\bmat{\vec P_{eq}\\ \vec T_q}$ from the right yields the left hand side of \eqref{c04-eq:charact-projections}. The right hand side being zero, this proves (skew-)symmetry of the matrices \eqref{c04-eq:skew-symmetry-io-repr} of the input-output representation.
\end{proof}

The proposition is a generalization of Proposition 20 in \cite{moulla2012pseudo} for the 1D case and the pseudo-spectral method. Note that the rank condition \eqref{c04-eq:rank-condition-P-T} on the effort and flow and boundary maps is sufficient (not necessary) for the subspace \eqref{c04-eq:subspace-tilde-D} to be a Dirac structure. The fact that both matrices in \eqref{c04-eq:rank-condition-P-T} are assumed square and invertible, guarantees the input-output representation in the corollary.

\subsection{Finite-dimensional port-Hamiltonian model}

To build from the input-output representation of the Dirac structure a finite-dimensional PH model for the \emph{canonical system of two conservation laws}, we replace the minimal discrete flow variables by time derivatives of \emph{discrete states}\footnote{If a flow variable is defined differently, as in the case of the 2D SWE with the additional rotation term, this has to be accounted for also in the discrete equation.}
\begin{equation}
    \label{c04-eq:p-q-tilde-discrete}
    -\tilde{\vec f}^p =: \dot {\tilde{\vec p}} \in \IR^{\tilde N_p}, \qquad
    -\tilde{\vec f}^q =: \dot {\tilde{\vec q}} \in \IR^{\tilde N_q}.
\end{equation}
Then, the minimal efforts need to be replaced by the partial derivatives of a suitable discrete Hamiltonian $\tilde H_d(\tilde{\vec p}, \tilde{\vec q})$
\begin{equation}
    \label{c04-eq:etilde-from-Hd}
    \tilde{\vec e}^p = \left( \frac{\partial \tilde H_d}{\partial \tilde{\vec p}}\right)^T \in \IR^{\tilde N_p}, \qquad
    \tilde{\vec e}^q = \left( \frac{\partial \tilde H_d}{\partial \tilde{\vec q}}\right)^T \in \IR^{\tilde N_q}. 
\end{equation}
The discrete Hamiltonian must be defined in such a way that the discrete effort variables represent a \emph{consistent} approximation of their continuous counterparts. We present the discretization of the constitutive equations in more detail in the FE example of Section \ref{sec:example}.

With the state, input and output vectors 
\begin{equation}
    \label{eq:ssmodel-x-u-y}
    \vec x = \bmat{ \tilde{\vec p}\\ \tilde{\vec q} }, \quad
    \vec u = \bmat{\hat{\vec e}^b\\ \vec e^b}, \quad
    \vec y = \bmat{\hat{\vec f}^b\\ \vec f^b},
\end{equation}
the resulting state space model ($\vec x \in \IR^{\tilde N_p + \tilde N_q}$, $\vec u, \vec y \in \IR^{M_b + \hat M_b}$)
    \begin{equation}
        \label{c04-eq:finite-dim-ph-approx}
        \begin{split}
            \dot{\vec x} &= \vec J \nabla H_d(\vec x) + \vec B \vec u\\
            \vec y &= \vec B^T \nabla H_d(\vec x) + \vec D \vec u
        \end{split}
    \end{equation}
has explicit PH form and the discrete energy satisfies the balance equation
\begin{equation}
    \dot{H}_d = - \vec y^T \vec u,    
\end{equation}
which is the finite-dimensional counterpart of \eqref{eq:2-energy-balance-boundary}. The PH form allows to easily interconnect the finite-dimensional model of the system of two conservation laws with other subsystems in a power-preserving way, which is the basis for energy-based control design \emph{by interconnection} see e.\,g. \cite{macchelli2011energy}.

\section{Examples}
\label{sec:example}

In this section, we first illustrate the construction of power-preserving mappings and consistent Hodge matrices for the case of Whitney approximation forms and a 2D rectangular grid. The interpretation of the mappings in terms of weighted balance domains to compute the co-state variables (i.\,e. the ``internal'' discrete efforts) is illustrated and a simulation study highlights the effects of different parametrizations. In the second subsection, we consider the example of the 1D wave equation, again with Whitney approximation forms, in order to illustrate the difference of our approach to \cite{golo2004hamiltonian}. We study the numerical approximation of the eigenvalues and discuss the effect of \emph{upwinding} in the context of our approach.

\subsection{Wave equation on a 2D rectangular grid}
\label{subsec:example-2D}

To illustrate the steps towards an approximate PH state space model with desired boundary inputs by \emph{geometric discretization}, we consider a $2$-dimensional rectangular domain $\Omega = (0,L_x) \times (0,L_y) \subset \IR^2$, with boundary $\partial \Omega$, covered by a regular, \emph{oriented} simplicial triangulation $\mathcal T_h$, as sketched in Fig. \ref{c04-fig:3x2-mesh}. The system equations that relate distributed flow and effort differential forms with $f^p\in L^2\Lambda^2(\Omega)$, $q \in L^2\Lambda^1(\Omega)$, $e^p \in H^1\Lambda^0(\Omega)$, $e^q\in H^1 \Lambda^1(\Omega)$ are, according to \eqref{eq:stokes-dirac-interconnection},
\begin{equation}
    \label{eq:f-d-e-2D}
	\bmat{f^p\\ f^q}
		=
	\bmat{0 & - \mathrm{d}\\ \mathrm{d} & 0} 
    \bmat{e^p\\ e^q}.
\end{equation}
The effort forms are derived from a Hamiltonian functional,
\begin{equation}
    \label{eq:const-eq-2D}
    e_p = \delta_p H, \quad e_q = \delta_q H, \qquad
    H = \int_{\Omega} \mathcal H,
\end{equation} 
with $p \in L^2 \Lambda^2(\Omega)$ and $q \in L^2 \Lambda^1(\Omega)$ the conserved quantities and $\mathcal H$ the Hamiltonian density $n$-form. The dynamics equations are
\begin{equation}
    \label{eq:dynamic-eq-2D}
    \dot p = -f^p, \quad \dot q = - f^q.
\end{equation}
The boundary input variables (the causality of the boundary ports) will be specified in the \emph{discrete} setting by the choice of the boundary trace matrices $\vec T_q$ and $\hat{\vec T}_p$.

\begin{figure}
    \centering
    \begin{subfigure}[c]{0.3\textwidth}
        \centering
        \includegraphics[scale=0.8]{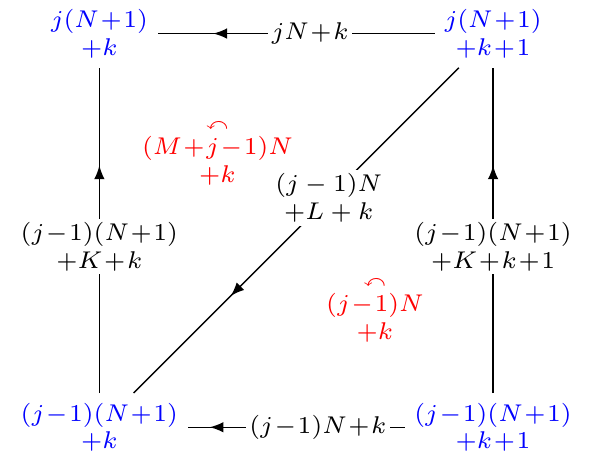}
        \caption{Unit square cell.}
    \end{subfigure}
    \begin{subfigure}[c]{0.5\textwidth}
        \centering
        \includegraphics[scale=0.8]{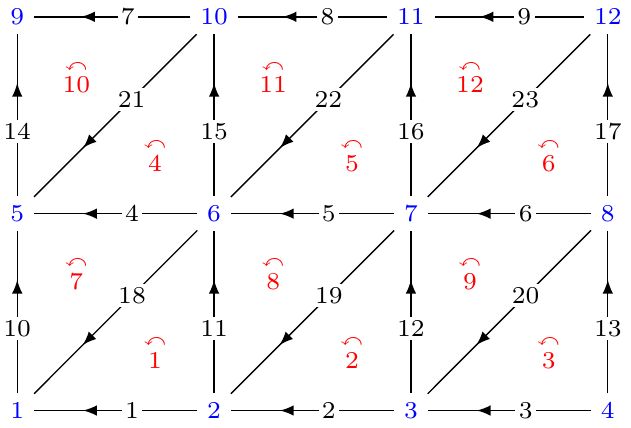}
        \caption{Example: $3\times 2$ mesh.}
    \end{subfigure}

    \caption{Numbering of nodes, edges and faces on a $N\times M$ rectangular simplicial mesh. $K=(M+1)N$, $L=K+M(N+1)$.}
    \label{c04-fig:3x2-mesh}
\end{figure}

\subsubsection{Whitney forms}
The approximation bases for flows and efforts \eqref{c04-eq:galerkin-subspaces-flows}, \eqref{c04-eq:galerkin-subspaces-efforts} are composed of \emph{Whitney forms} \cite{whitney1957geometric} of lowest polynomial degree, which can be constructed based on the \emph{barycentric} node weights \cite{bossavit2002generating}. The degrees of freedom are directly associated to the \emph{nodes}, directed \emph{edges} and \emph{faces} of the mesh.  The well-known geometric discretization of Maxwell's equations \cite{bossavit1998computational} is based on Whitney forms, and the resulting finite-dimensional models feature the (co-)incidence matrices of the underlying discretization meshes \cite{bossavit1999yee}. They can be considered a \emph{direct representation} of the physical laws on the discrete balance regions of the triangulation. In contrast to \cite{bossavit1999yee}, \cite{tonti2001direct}, where the conservation laws are evaluated on \emph{dual} or \emph{staggered} grids, we start with a \emph{single} mesh. Nevertheless, in our approach, the mappings of the original degrees of freedom allow the interpretation of the minimal discrete flows and efforts in terms of \emph{topological duality}.

\begin{exm}[Whitney forms over a 2D simplex]
	Consider the triangle $f_1 = \{ (x,y)\, |\, x,y\geq 0, \, 0 \leq x+y \leq h\}$, with vertices $n_1 = (0,0)$, $n_2 = (h,0)$, $n_3 = (0,h)$, which are connected by the directed edges $e_1$, $e_2$ and $e_3$ as shown in Fig. \ref{fig:whitney-forms}. The node, edge and face forms are constructed according to \cite{bossavit2002generating}:
	\begin{equation}
	\begin{split}
		w^{n_1} = 1 - \frac{x}{h} - \frac{y}{h}, \quad
		w^{n_2} =  \frac{x}{h}, \quad
		w^{n_3} =  \frac{y}{h},
	\end{split}
	\end{equation}
	\begin{equation}
		\begin{split}
			w^{e_1} = \frac{h-y}{h^2} dx + \frac{x}{h^2} dy, \quad
			w^{e_2} = - \frac{y}{h^2} dx + \frac{x}{h^2} dy, \quad
			w^{e_3} = - \frac{y}{h^2} dx + \frac{x-h}{h^2} dy,
		\end{split}
	\end{equation}
	\begin{equation}
		\begin{split}
			w^{f_1} = \frac{2}{h^2} dx dy.
		\end{split}
	\end{equation}
	The $0$-, $1$- and $2$-forms verify $w^{n_i}(n_j) = \delta_{ij}$, $\int_{e_i} w^{e_j} = \delta_{ij}$ ($\delta_{ij}$ the Kronecker-Delta) and $\int_{f_1} w^{f_1} = 1$.

\end{exm}

    \begin{figure}

    \begin{subfigure}[b]{0.2\textwidth}
        \includegraphics[scale=1]{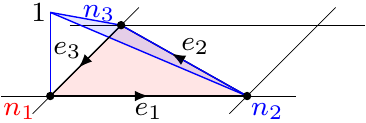}
        \caption{Node form $w^{n_1}$}
    \end{subfigure}
    \quad
    \begin{subfigure}[b]{0.5\textwidth}
        \includegraphics[scale=1]{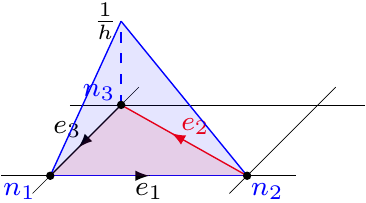}
        \includegraphics[scale=1]{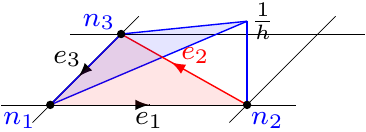}
        \caption{Components $-c_x$, $c_y$ of edge form $w^{e_2} = c_x dx + c_y dy$}
    \end{subfigure}
    \quad
    \begin{subfigure}[b]{0.2\textwidth}
        \includegraphics[scale=1]{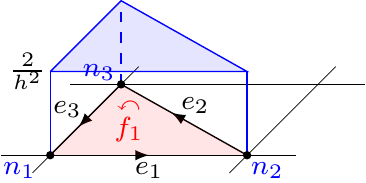}
        \caption{Face form $w^{f_1}$}
    \end{subfigure}
 
    \caption{Illustrations of the Whitney forms over a $2$-simplex.}
    \label{fig:whitney-forms}
    \end{figure}

\subsubsection{Mesh, matrices and dimensions}
Using Whitney basis forms, the degrees of freedom in the mixed Galerkin approach are associated to integrals of distributed quantities on the $k$-simplices of the mesh. The dimensions of the (initial) discrete flow and effort vectors equal the numbers of corresponding nodes, edges and faces on the grid. The same holds for the discrete efforts on the boundary, which are designated in- or outputs and are localized at the corresponding boundary nodes and edges, see Table \ref{c04-tab:dimensions-f-e}.

\begin{table}[h]
    \centering
    \caption{Dimensions of discrete flow and efforts spaces on the rectangular $N\times M$ simplicial grid.}
    {\small
    \begin{tabular}{lcccc}
        \hline
        Vector(s) & $\vec f^p$ & $\vec f^q$, $\vec e^q$ & $\vec e^p$  &  $\vec e^p_b$, $\vec e^q_b$ \\
        \hline
        Dimension & $2NM$ & $3NM\!+\!N\!+\!M$ & $(N\!+\!1)(M\!+\!1)$ & $2(N\!+\!M)$\\
        Symbol(s) & $N_p$ & $N_q=M_q$ & $M_p$  & $M_p^b=M_q^b$ \\
        \hline
    \end{tabular}
    }
    \label{c04-tab:dimensions-f-e}

\end{table}
\begin{table}[h]
    \centering
    \caption{Sizes and ranks of the matrices resulting from the mixed Galerkin approximation and the direct discrete model, respectively. $N,M >2$.}
    {\small
    \begin{tabular}{lccccccc}
        \hline
        Matrix & $\vec M_p$     & $\vec K_p + \vec L_p$    
               & $\vec d_p$     
               & $\vec M_q$     & $\vec K_q + \vec L_q$    
               & $\vec d_q$     
               & $\vec L_p=\vec L_q^T$
        \\
        \hline
        Size & $M_p\times N_p$  & $M_p \times N_q$    
             & $N_p\times N_q$ 
             & $N_q \times N_q$ & $N_q \times M_p$
             & $N_q \times M_p$
             & $M_p \times N_q$    
        \\
        Rank & $M_p-2$          & $M_p-2$          
             & $N_p$ 
             & $2(M_p-2)$       & $M_p-1$
             & $M_p-1$
             & $2(M+N)-1$
        \\
        \hline
    \end{tabular}
    }
    \label{c04-tab:ranks-matrices}
\end{table}

The mixed Galerkin approximation of the Stokes-Dirac structure yields a set of matrices with different sizes and ranks, see Table \ref{c04-tab:ranks-matrices}. The construction of  power-preserving mappings and conjugated output matrices that satisfy matrix equation \eqref{c04-eq:charact-projections}, is based on rank considerations of the involved matrix products.

\subsubsection{Power-preserving mappings, discrete in- and outputs}
We illustrate at three elementary examples the construction of the power-preserving flow and effort maps and conjugated output matrices that satisfy Eq. \eqref{c04-eq:charact-projections}. The structure of the resulting matrices can be extrapolated to the case of $N\times M$ grids with arbitrarily distributed boundary inputs of mixed causality.

\begin{exm}[Elementary $1\times1$ grid]
    Consider the sample grid in Fig. \ref{fig:sample_grid_4_nodes}. The mixed Galerkin discretization of \eqref{eq:f-d-e-2D} with Whitney forms yields the discrete representation \eqref{c04-eq:discrete-conservation-laws} with $(-1)^r=-1$ with the face degrees of freedom (flows) $\vec f^p = - \dot{\vec p} \in \IR^2$, the edge degrees of freedom (flows and efforts) $\dot{\vec q} = - \vec f^q, \vec e^q \in \IR^5$ the node degrees of freedom (efforts) $\vec e^p \in \IR^4$. The discrete derivative matrices, which satisfy the discrete complex property $\vec d_p \vec d_q = \vec 0$, are the co-incidence matrices of the graph
    \begin{equation}
        \vec d_p =
        \bmat{
        1 & 0 & 0 & 1 & 1\\
        0 & 1 & 1 & 0 & -1
        }
        , \qquad 
        \vec d_q =
        \bmat{
        -1 & 1 & 0 & 0\\
        0 & -1 & 1 & 0\\
        0 & 0 & -1 & 1\\
        1 & 0 & 0 & -1\\
        0 & -1 & 0 & 1
        }.
    \end{equation}

\paragraph{Input trace matrices and effort maps.}
We assign all effort degrees of freedom at the boundary edges the role of inputs\footnote{With this choice, we can easily derive the construction of the power-preserving mappings and output matrices. The same power-preserving mappings are valid with arbitrary boundary causality, while the output matrices can be easily adapted, as in the case of the simulation examples.}, summarized in $\vec e^b \in \IR^4$. The interior edge is related to the minimal effort $\tilde e^q \in \IR$.
\begin{equation}
    \bmat{ \vec e^b\\ \tilde{e}^q } =
    \bmat{ \vec T_q\\ \vec P_{eq}} \vec e^q
    \qquad \text{with} \qquad 
    \vec T_q = \bmat{\vec I_4 & \vec 0_{4\times 1}}, \quad \vec P_{eq} = \bmat{ \vec 0_{1\times 4} & 1}.
\end{equation}
No node plays the role of an input node, hence,
\begin{equation}
    \tilde{\vec e}^p = \vec P_{ep} \vec e^p     \qquad \text{with} \qquad  \vec P_{ep} = \vec I_4.
\end{equation}

\paragraph{Mapping of the conserved quantities on the faces.} For the mapping of the vector of integral conserved quantities\footnote{We refer to the ``original'' discrete vectors $\vec p$, $\vec q$ as \emph{discrete conserved quantities}, while we call $\tilde{\vec p}$, $\tilde{\vec q}$ the \emph{state vectors} of the resulting PH state space model.}  $\vec p \in \IR^2$ on the two faces (triangles), we argue as follows. The vector of discrete states $\tilde{\vec p} \in \IR^4$, which is dual to the vector $\tilde{\vec e}^p \in \IR^4$ of node efforts, shall
\begin{enumerate}
    \item contain weighted sums of the discrete conserved quantities on the faces that touch the corresponding node and
    \item the sum of its elements must reflect the total conserved quantity. In the example according to Fig. \ref{fig:sample_grid_4_nodes}, this means
    \begin{equation}
        \sum_{i=1}^{4} \tilde p_i = \sum_{j=1}^{2} p_j. 
    \end{equation}
\end{enumerate}
With $\tilde{\vec p} = \vec P_{fp} \vec p$, the second condition translates to\footnote{$\vec 1_n$ denotes a column vector whose $n$ elements are all 1.}  $\mathbf{1}_4^T \vec P_{fp} = \mathbf{1}_2^T$, i.\,e. the column sums of the matrix $\vec P_{fp} \in \IR^{4\times 2}$ must equal one. A matrix that satisfies this condition is
\begin{equation}
    \vec P_{fp} = \bmat{ \gamma_{I} & 0 \\ \beta_{I} & \alpha_{I\!I} \\ 0 & \gamma_{I\!I} \\ \alpha_{I} & \beta_{I\!I}} \qquad \text{with} \qquad
    \alpha_{j} + \beta_{j} + \gamma_{j} = 1, \quad j\in \{I, I\!I\}.
\end{equation}
The weights of the conserved quantities $p_1$, $p_2$ in the definition of the  states $\tilde p_i$, which are associated to the nodal efforts $\tilde e^p_i$, $i=1,\ldots,4$, are printed in Fig. \ref{fig:sample_grid_4_nodes} in red and green, respectively.

\begin{figure}
    \centering
    \includegraphics[scale=0.8]{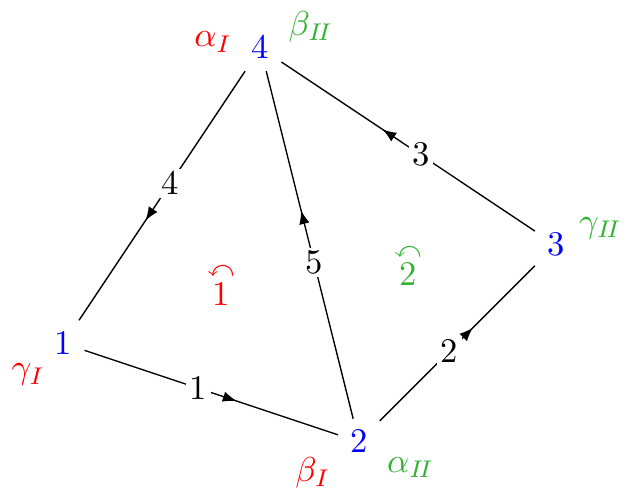}
    \caption{Sample grid  of 4 nodes to illustrate the construction of power-preserving mappings. The coefficients $\alpha_j, \beta_j, \gamma_j=1-\alpha_j-\beta_j$, $j\in\{I,I\!I\}$ weight the contribution of the integral conserved quantities $p_1$ and $p_2$ in the definition of the states $\tilde p_i$, which are associated to the edge efforts $\tilde e^p_i$, $i=1,\ldots,4$.}
    \label{fig:sample_grid_4_nodes}
\end{figure}

\paragraph{Output matrix for the nodal efforts.} The matrix equation \eqref{c04-eq:charact-projections} without a matrix $\hat{\vec T}_p$ can  be written in the form
\begin{equation}
    \label{eq:matrix-equation-without-Tp-hat}
    (-1)^r \vec d_p^T \vec P_{fp}^T \vec P_{ep} + 
    \left[
    \begin{array}{cc}
    \vec T_q^T &
    \vec P_{eq}^T
    \end{array}
    \right]
    \left[
    \begin{array}{c}
    \vec S_p\\
    \vec P_{fq} \vec d_q
    \end{array}
    \right]
    =
    \vec 0.
\end{equation}
Exploiting that $\bmat{\vec T_q^T & \vec P_{eq}^T}$ is a permutation matrix, the equation can be multiplied from the left with its transpose (which equals its inverse), and we obtain as the first line the output matrix associated to node efforts
\begin{equation}
    \label{eq:example-Sp}
    \vec S_p = - (-1)^r \vec T_q \vec d_p^T \vec P_{fp}^T \vec P_{ep} = 
    \bmat{\gamma_{I} & \beta_{I} & 0 & \alpha_{I}\\
        0 & \alpha_{I\!I} & \gamma_{I\!I} & \beta_{I\!I}\\
        0 & \alpha_{I\!I} & \gamma_{I\!I} & \beta_{I\!I}\\
        \gamma_{I} & \beta_{I} & 0 & \alpha_{I}}.
\end{equation}
The discrete output vector $\vec f^b = \vec S_p \vec e^p$ contains -- on this very simple grid -- two pairs of identical elements, which each represent convex sums of the node efforts. Regarding for example the outer boundary of face 1 in Fig. \ref{fig:sample_grid_4_nodes}, this identity is no surprise. If we delete node 1 (from the graph), and consider edges 1 and 4 as a single edge 14, the power $e_1^b f_1^b + e_4^b f_4^b$ which is transmitted over both edges must equal $(e_1^b + e_4^b) f_{14}^b$, which is the case for $f_1^b = f_4^b$.

\paragraph{Mapping of the edge states.} In analogy to \eqref{eq:example-Sp}, the matrix equation 
\begin{equation}
    \label{eq:example-Pfq-dq}
    \vec P_{fq} \vec d_q = -(-1)^r \vec P_{eq} \vec d_p^T \vec P_{fp}^T \vec P_{ep}
\end{equation}
determines the matrix $\vec P_{fq}$. The solution consists of a particular part to which a linear combination of the rows of $\vec d_p$ (recall that $\vec d_p \vec d_q = \vec 0$) can be added:
\begin{equation}
\begin{split}
\vec P_{fq} &= \vec P_{fq}^p + 
\left[ \begin{array}{cc} c_{1} & c_{2}\end{array} \right]
\vec d_p\\
&= \left[ \begin{array}{ccccc} -\gamma_I & -\gamma_{I\!I} & \alpha_I -\beta_{I\!I} & 0 & 0 \end{array} \right]
+
\left[ \begin{array}{cc} c_{1} & c_{2}\end{array} \right]
\left[ \begin{array}{ccccc} 1 & 0 & 0 & 1 &1\\ 0 & 1 & 1 & 0 & -1 \end{array} \right].
\end{split}
\end{equation}
With $c_1 = \frac{\gamma_I}{2}$ and $c_2=-\alpha_I + \beta_{I\!I} + \frac{\gamma_{I\!I}}{2}$, we get a matrix of the form
\begin{equation}
\begin{split}
\vec P_{fq} &= \vec P_{fq}^\perp + \vec P_{fq}^\parallel \\
    &=
\left[
\begin{array}{ccccc}
- \frac{\gamma_I}{2} &
- \frac{\gamma_{I\!I}}{2} &
\frac{\gamma_{I\!I}}{2} &
\frac{\gamma_I}{2} &
0
\end{array}
\right]
+\left[
\begin{array}{ccccc}
0 &
0 &
0 &
0 &
\frac{\alpha_I-\beta_I}{2} + \frac{\alpha_{I\!I}-\beta_{I\!I}}{2}
\end{array}
\right],
\end{split}
\end{equation}
where $\vec P_{fq}^\perp$ contains the weights of the conserved quantities $q_j$ on the edges ``across'' the edge on which the minimal effort $\tilde e_i^q$ is defined. Accordingly, $\vec P_{fq}^\parallel$ contains the weight associated to exactly this edge. Note that only $\vec P_{fq}^\perp$ will contribute to the definition of the discrete Hodge matrix $\vec Q_q$, which relates the efforts \emph{across} edges of the grid with the states \emph{along the dual edges}.

\end{exm}

The construction which we demonstrated for the simplest quadrilateral grid, can be extended to a rectangular grid, which is shown in the next example.

\begin{exm}[$2\times 1$ grid, unique boundary causality]
    \label{ex:2x1-grid}
    We now consider the $2\times1$ rectangular grid as depicted in Fig. \ref{fig:2x1-grid}, whose co-incidence matrices are the discrete derivative matrices 
    \begin{equation}
        \vec d_p =
        \bmat{
            -1 & 0 & 0 & 0 & 0 & 1 & 0 & 1 & 0\\
            0 & -1 & 0 & 0 & 0 & 0 & 1 & 0 & 1\\
            0 & 0 & 1 & 0 & -1 & 0 & 0 & -1 & 0\\
            0 & 0 & 0 & 1 & 0 & -1 & 0 & 0 & -1
        }, \qquad 
        \vec d_q =
        \bmat{
            1 & -1 & 0 & 0 & 0 & 0 \\
            0 & 1 & -1 & 0 & 0 & 0 \\
            0 & 0 & 0 & 1 & -1 & 0\\
            0 & 0 & 0 & 0 & 1 & -1 \\
            -1 & 0 & 0 & 1 & 0 & 0 \\
            0 & -1 & 0 & 0 & 1 & 0 \\
            0 & 0 & -1 & 0 & 0 & 1 \\
            1 & 0 & 0 & 0 & -1 & 0 \\
            0 & 1 & 0 & 0 & 0 & -1
        }.
    \end{equation}

\begin{figure}
    \centering
    \includegraphics[scale=1.2]{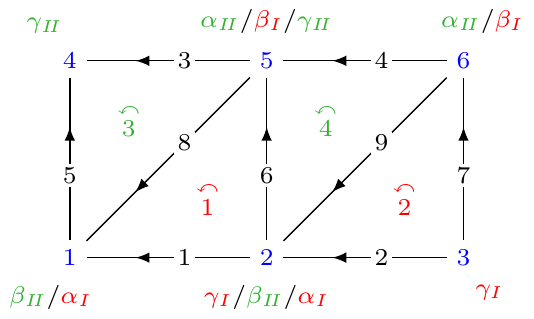}
    \caption{Illustration of the $2\times 1$ grid in Example \ref{ex:2x1-grid}. Each state $\tilde p_i$, which is associated to a ``nodal'' co-state $\tilde e^p_i$, $i=1,\ldots,6$, is defined by a weighted sum of the conserved quantities $p_j$, $j=1,\ldots,4$, on the adjacent triangles. The weights are printed next to the nodes. Red color and index $I$ refer to the lower triangles, green color and index $I\!I$ to the upper triangles. }
    \label{fig:2x1-grid}
\end{figure}

\paragraph{Input trace matrices and effort mappings.} As in the previous example, we start with a single causality on the boundary and the only input trace matrix
\begin{equation}
    \vec T_{q,1} =
    \bmat{
        1 & 0 & 0 & 0 & 0 & 0 & 0 & 0 & 0\\
        0 & 1 & 0 & 0 & 0 & 0 & 0 & 0 & 0\\
        0 & 0 & 1 & 0 & 0 & 0 & 0 & 0 & 0\\
        0 & 0 & 0 & 1 & 0 & 0 & 0 & 0 & 0\\
        0 & 0 & 0 & 0 & 1 & 0 & 0 & 0 & 0\\
        0 & 0 & 0 & 0 & 0 & 0 & 1 & 0 & 0
    },
\end{equation}
The remaining edges and all nodes are the discrete objects on which the elements of the co-state vectors $\tilde{\vec e}^q$ and $\tilde{\vec e}^p$ are defined. This fact is represented by the effort mapping matrices
\begin{equation}
    \label{eq:Peq-Pep-2x1-without-Tp-hat}
    \vec P_{eq,1} =
    \bmat{
        0 & 0 & 0 & 0 & 0 & 1 & 0 & 0 & 0 \\
        0 & 0 & 0 & 0 & 0 & 0 & 0 & 1 & 0 \\
        0 & 0 & 0 & 0 & 0 & 0 & 0 & 0 & 1
    },
    \quad
    \vec P_{ep,1} = \vec I_6.
\end{equation}
We use the index $1$ for this case with \emph{only edge inputs}, and refer to the corresponding matrices in the following example.

\paragraph{Mapping of the conserved quantities on the faces.} With the same arguments as for the simple example before, we can construct the matrix to define the discrete states $\tilde{\vec p} = \vec P_{fp,1} \vec p$, see also the illustration of the weights in Fig. \ref{fig:2x1-grid}:
\begin{equation}
    \vec P_{fp,1} =
    \bmat{
        \alpha_I & 0 & \beta_{II} & 0\\
        \gamma_I & \alpha_I & 0 & \beta_{II} \\
        0 & \gamma_I & 0 & 0\\
        0 & 0 & \gamma_{II} & 0\\
        \beta_I & 0 & \alpha_{II} & \gamma_{II}\\
        0 & \beta_I & 0 & \alpha_{II} 
    }, \qquad
    \alpha_{I/I\!I} + \beta_{I/I\!I} + \gamma_{I/I\!I} = 1.
\end{equation}

\paragraph{Output matrices for the nodal efforts.} According to \eqref{eq:example-Sp} we obtain for the nodal output matrix
\begin{equation}
    \label{eq:2x1grid-0-Sp}
    \vec S_{p,1} =
    \bmat{
        -\alpha_I & -\gamma_I & 0 & 0 & -\beta_{I} & 0\\
        0 & -\alpha_I & -\gamma_I & 0 & 0 & -\beta_{I}\\
        \beta_{I\!I} & 0 & 0 & \gamma_{I\!I} & \alpha_{I\!I} & 0 \\
        0 & \beta_{I\!I} & 0 & 0 & \gamma_{I\!I} & \alpha_{I\!I}\\
        -\beta_{I\!I} & 0 & 0 & -\gamma_{I\!I} & -\alpha_{I\!I} & 0 \\
        0 & \alpha_I & \gamma_I & 0 & 0 & \beta_{I}
    }.
\end{equation}
Note that again there are two pairs of identical outputs (modulo the sign depending on the orientation of the input edge), which is due to the fact that by merging the adjacent edges, nodes 3 and 4 could be removed from the graph.

\paragraph{Mapping of the edge states.} The solution of the matrix equation \eqref{eq:example-Pfq-dq} for the matrices as defined above (again, the rows of $\vec d_p$ can be used to adjust the solution) results in a matrix
\begin{equation}
        \vec P_{fq,1} = \vec P_{fq,1}^\perp + \vec P_{fq,1}^\parallel + \vec P_{fq,1}^{rot}
\end{equation}
with
\begin{align}
\vec P_{fq,1}^\perp &= 
    \left[
    \begin{array}{ccccccc|cc}
        \alpha_{I} & 0 & 0 & \alpha_{I\!I} & 0 & 0 & 0 & 0 & 0\\ \hline
        -\frac{\gamma_{I}}{2} & 0 & -\frac{\gamma_{I\!I}}{2} & 0 & -\frac{\gamma_{I\!I}}{2} & -\frac{\gamma_{I}}{2} & 0 & 0 & 0\\ 
        0 & -\frac{\gamma_{I}}{2} & 0 & -\frac{\gamma_{I\!I}}{2} & 0 & -\frac{\gamma_{I\!I}}{2} & -\frac{\gamma_{I}}{2} & 0 & 0 
    \end{array}
    \right],
    \\
\vec P_{fq,1}^\parallel  &= 
    \left[
    \begin{array}{ccccccc|cc}
        0 & 0 & 0 & 0 & 0 & \beta_{I}+\beta_{I\!I}-1 & 0 & 0 & 0\\ \hline
        0 & 0 & 0 & 0 & 0 & 0 & 0 & \frac{\alpha_I-\beta_I}{2} + \frac{\alpha_{I\!I}-\beta_{I\!I}}{2} & 0\\ 
        0 & 0 & 0 & 0 & 0 & 0 & 0 & 0 & \frac{\alpha_I-\beta_I}{2} + \frac{\alpha_{I\!I}-\beta_{I\!I}}{2}
    \end{array}
    \right],
    \\
\vec P_{fq,1}^{rot}  &= 
    \left[
    \begin{array}{ccccccc|cc}
    -\delta_{I} & \delta_{I\!I} & \delta_{I} & -\delta_{I\!I} & -\delta_{I} & \delta_{I}+\delta_{I\!I} & -\delta_{I\!I} & 0 & 0\\ \hline
        0 & 0 & 0 & 0 & 0 & 0 & 0 & 0 & 0\\ 
        0 & 0 & 0 & 0 & 0 & 0 & 0 & 0 & 0 
    \end{array}
    \right],
\end{align}
and the abbreviation 
\begin{equation}
    \delta_{I/II} = \frac{1}{8} + \frac{1}{4}(\alpha_{I/II}-\beta_{I/II}).
\end{equation}
\end{exm}

\begin{figure}
    \centering
    \begin{subfigure}[b]{0.24\textwidth}
        \centering
        \includegraphics[scale=0.75]{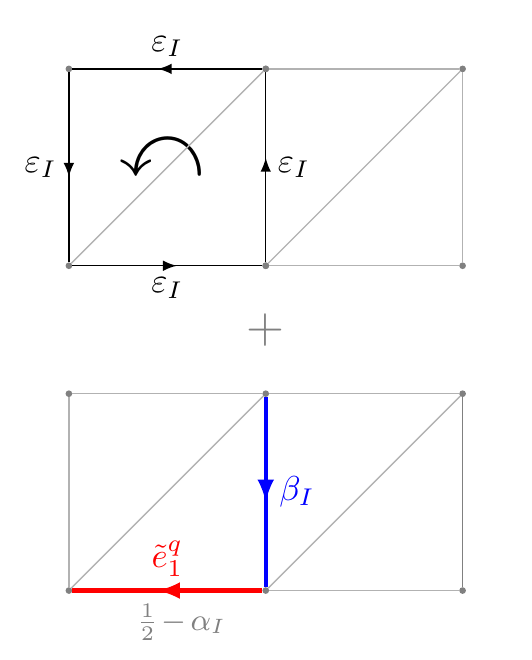}     

        \caption{$\tilde q_1$}
        \label{fig:q-tilde-1}
    \end{subfigure}
    \begin{subfigure}[b]{0.24\textwidth}
        \centering
        \includegraphics[scale=0.75]{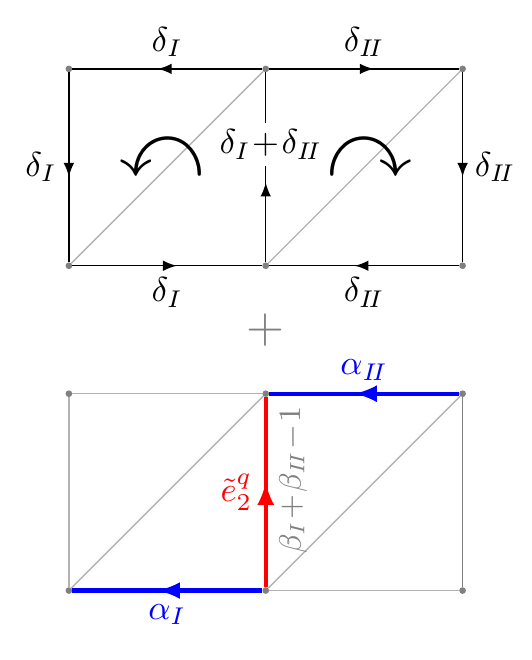} 
    
        \caption{$\tilde q_2$}
        \label{fig:q-tilde-2}
    \end{subfigure}
    \begin{subfigure}[b]{0.24\textwidth}
        \centering
        \includegraphics[scale=0.75]{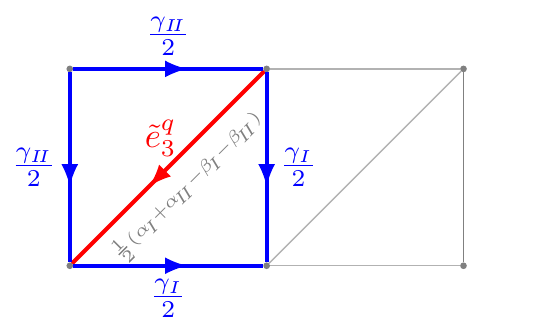} 
    
        \caption{$\tilde q_3$}
        \label{fig:q-tilde-3}
    \end{subfigure}
    \begin{subfigure}[b]{0.24\textwidth}
        \centering
        \includegraphics[scale=0.75]{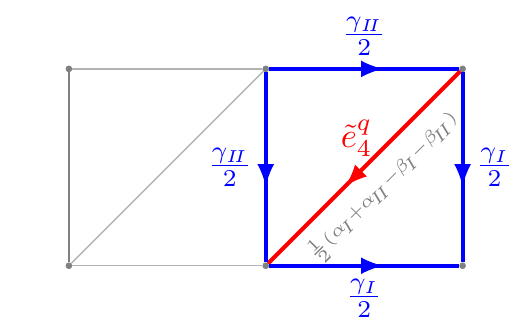} 
    
        \caption{$\tilde q_4$}
        \label{fig:q-tilde-4}
    \end{subfigure}
    \caption{Illustration of the components of $\tilde{\vec q}$ in Example \ref{ex:2x1-grid-new} in terms of the original conserved quantities $q_j$, $j=1,\ldots,9$, on the edges of the grid. The components across the considered effort edge/aligned with the effort edge are drawn in blue/red. The round black arrows indicate the sense of the rotational components in $\tilde q_1$ and $\tilde q_2$ for positive values of $\varepsilon_{I}$ and $\delta_{I/I\!I}$, respectively.}
    \label{fig:q-tilde}
\end{figure}

\begin{exm}[$2\times 1$ grid, mixed boundary causality]
    \label{ex:2x1-grid-new}
    Still considering the grid in Fig. \ref{fig:2x1-grid}, we assign the efforts in nodes $1$ and $2$ the role of (boundary) inputs $\hat e^b_1$ and $\hat e^b_2$ and remove the effort on edge  $1$ from the input vector $\vec e^b$. The corresponding input trace matrices are
    \begin{equation}
        \hat{\vec T}_p =
        \bmat{ 1 & 0 & 0 & 0 & 0 & 0\\ 0 & 1 & 0 & 0 & 0 & 0 }, \qquad 
        \vec T_q = 
        \bmat{  0 & 1 & 0 & 0 & 0 & 0 & 0 & 0 & 0\\ 0 & 0 & 1 & 0 & 0 & 0 & 0 & 0 & 0\\ 0 & 0 & 0 & 1 & 0 & 0 & 0 & 0 & 0\\ 0 & 0 & 0 & 0 & 1 & 0 & 0 & 0 & 0\\ 0 & 0 & 0 & 0 & 0 & 0 & 1 & 0 & 0 }.
    \end{equation}
    The matrix equation \eqref{c04-eq:charact-projections}  for power preservation can now be written as
    \begin{equation}
        \label{eq:matrix-equation-with-Tp-hat}
        \bmat{(-1)^r\vec d_p^T \vec P_{fp}^T & \hat{\vec S}_q^T }
        \bmat{\vec P_{ep}\\ \hat{\vec T}_p}
        +
        \bmat{\vec P_{eq}^T & \vec T_q^T}
        \bmat{\vec P_{fq} \vec d_q\\ \vec S_p} = \vec 0.
    \end{equation}
    For the moment, we assume that by appropriate choice of $\hat{\vec S}_q$, the first term can be made $(-1)^r \vec d_p^T \vec P_{fp,1}^T \vec P_{ep,1}$. We obtain the flow map $\vec P_{fq}$ and the output matrix $\vec S_p$ in the second term  (with $\vec P_{ep,1}= \vec I$) by the solution of 
    \begin{equation}
        \vec P_{fq} \vec d_q = -(-1)^r \vec P_{eq} \vec d_p^T \vec P_{fp,1}^T, \qquad
        \vec S_{p} = -(-1)^r \vec T_{q} \vec d_p^T \vec P_{fp,1}^T.
    \end{equation}
    The output matrix $\vec S_p$ contains the rows of $\vec S_{p,1}$ that correspond to the input edges represented by the rows of $\vec T_q$. In the present case, we have to delete the first row in \eqref{eq:2x1grid-0-Sp} and obtain
\begin{equation}
    \label{eq:Sp-2x1-grid-new}
    \vec S_{p} =
    \bmat{
        0 & -\alpha_I & -\gamma_I & 0 & 0 & -\beta_{I}\\
        \beta_{I\!I} & 0 & 0 & \gamma_{I\!I} & \alpha_{I\!I} & 0 \\
        0 & \beta_{I\!I} & 0 & 0 & \gamma_{I\!I} & \alpha_{I\!I}\\
        -\beta_{I\!I} & 0 & 0 & -\gamma_{I\!I} & -\alpha_{I\!I} & 0 \\
        0 & \alpha_I & \gamma_I & 0 & 0 & \beta_{I}
    }.
\end{equation}
The construction of $\vec P_{fq}$ follows the same lines as in the previous examples. The horizontal edge, on which a discrete co-state is defined, gives rise to a new element of the discrete state vector $\tilde{\vec q} \in \IR^4$, which is illustrated in Fig. \ref{fig:q-tilde}. The matrix $\vec P_{fq}$ becomes
\begin{equation}
        \vec P_{fq} = \vec P_{fq}^\perp + \vec P_{fq}^\parallel + \vec P_{fq}^{rot}
\end{equation}
with
\begin{align}
\vec P_{fq}^\perp &= 
    \left[
    \begin{array}{ccccccc|cc}
        0 & 0 & 0 & 0 & 0 & - \beta_{I} & 0 & 0 & 0\\
        \alpha_{I} & 0 & 0 & \alpha_{I\!I} & 0 & 0 & 0 & 0 & 0\\ \hline
        -\frac{\gamma_{I}}{2} & 0 & -\frac{\gamma_{I\!I}}{2} & 0 & -\frac{\gamma_{I\!I}}{2} & -\frac{\gamma_{I}}{2} & 0 & 0 & 0\\ 
        0 & -\frac{\gamma_{I}}{2} & 0 & -\frac{\gamma_{I\!I}}{2} & 0 & -\frac{\gamma_{I\!I}}{2} & -\frac{\gamma_{I}}{2} & 0 & 0 
    \end{array}
    \right],
    \\
\vec P_{fq}^\parallel  &= 
    \left[
    \begin{array}{ccccccc|cc}
        \frac{1}{2}-\alpha_{I} & 0 & 0 & 0 & 0 & 0 & 0 & 0 & 0 \\
        0 & 0 & 0 & 0 & 0 & \beta_{I}+\beta_{I\!I}-1 & 0 & 0 & 0\\ \hline
        0 & 0 & 0 & 0 & 0 & 0 & 0 & \frac{\alpha_I-\beta_I}{2} + \frac{\alpha_{I\!I}-\beta_{I\!I}}{2} & 0\\ 
        0 & 0 & 0 & 0 & 0 & 0 & 0 & 0 & \frac{\alpha_I-\beta_I}{2} + \frac{\alpha_{I\!I}-\beta_{I\!I}}{2}
    \end{array}
    \right],
    \\
\vec P_{fq}^{rot}  &= 
    \left[
    \begin{array}{ccccccc|cc}
    -\varepsilon_{I} & 0 & \varepsilon_{I} & 0 & -\varepsilon_{I} & \varepsilon_{I} & 0 & 0 & 0 \\
    -\delta_{I} & \delta_{I\!I} & \delta_{I} & -\delta_{I\!I} & -\delta_{I} & \delta_{I}+\delta_{I\!I} & -\delta_{I\!I} & 0 & 0\\ \hline
        0 & 0 & 0 & 0 & 0 & 0 & 0 & 0 & 0\\ 
        0 & 0 & 0 & 0 & 0 & 0 & 0 & 0 & 0 
    \end{array}
    \right],
\end{align}
and the abbreviation 
\begin{equation}
    \varepsilon_{I/II} = \frac{1}{8} - \frac{1}{4}(\alpha_{I/II}-\beta_{I/II}).
\end{equation}
Figure \ref{fig:q-tilde} illustrates the different components whose (vector) sums constitute the states $\tilde q_i$, $i=1,\ldots,4$ in the example. With
\begin{equation}
    \vec P_{fp} = \vec P_{ep} \vec P_{fp,1} =
    \bmat{
        0 & \gamma_I & 0 & 0\\
        0 & 0 & \gamma_{II} & 0\\
        \beta_I & 0 & \alpha_{II} & \gamma_{II}\\
        0 & \beta_I & 0 & \alpha_{II} 
    }
\end{equation}
and 
\begin{equation}
\begin{split}
    \hat{\vec S}_q &= (-1)^r \hat{\vec T}_p 
    \bmat{\vec d_q^T \vec  P_{fq}^T & \vec S_p^T} 
    \bmat{\vec P_{eq} \\ \vec T_q}\\
    &=
    \bmat{
    \alpha_{I}-\frac{1}{2} & 0 & -\beta_{I\!I} & 0 & \beta_{I\!I} & -\alpha_{I} & 0 & \beta_{I\!I}-\alpha_{I} & 0\\ \gamma_{I}-\frac{1}{2} & \alpha_{I} & 0 & -\beta_{I\!I} & 0 & \beta_{I\!I}-\gamma_{I} & -\alpha_{I} & -\gamma_{I} & \beta_{I\!I}-\alpha_{I}
    },
\end{split}
\end{equation}
see Fig. \ref{fig:fb-hat}, the parametrization of power-preserving effort and flow maps and output matrices is completed.
\end{exm}

\begin{figure}
    \centering
    \begin{subfigure}[b]{0.3\textwidth}
        \centering
        \includegraphics[scale=0.8]{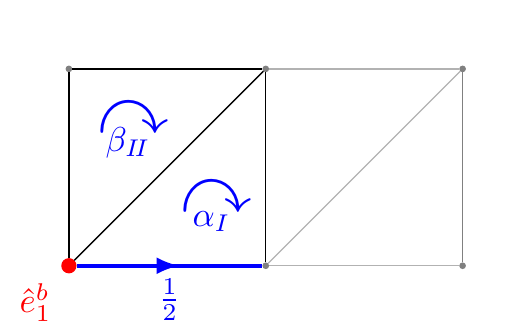} 
    
        \caption{$\hat f^b_1$}
        \label{fig:fb-hat-1}
    \end{subfigure}
    \begin{subfigure}[b]{0.3\textwidth}
        \centering
        \includegraphics[scale=0.8]{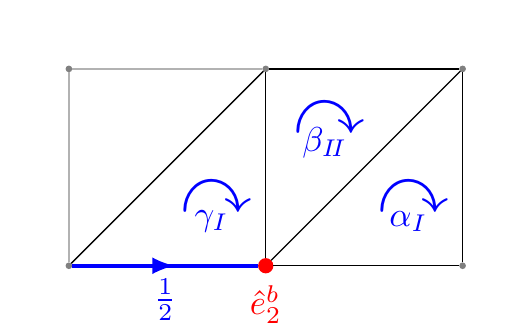} 
    
        \caption{$\hat f^b_2$}
        \label{fig:fb-hat-2}
    \end{subfigure}
    
    \caption{Illustration of the geometric objects  on which the elements of the output vector $\hat{\vec f}^b$ are defined in Example \ref{ex:2x1-grid-new} (blue, with weights). The dual elements of the input vector $\hat{\vec e}^b$ are defined on the red nodes.}
    \label{fig:fb-hat}
\end{figure}

\subsubsection{Generalization to $N\times M$ meshes and remarks}

\paragraph{$N\times M$ meshes.} The construction as presented on the three elementary examples above can be generalized in a straightforward manner to arbitrary $N\times M$ rectangular meshes. The direct interpretation of the discretized system equations as discrete conservation laws in the case of Whitney approximation forms allows for a construction of the matrices based on the properties of the 2-complex (generalized oriented graph) on the discretization mesh. In the above examples, we used only two sets of convex weights $(\alpha_j, \beta_j, \gamma_j)$, $j \in \{I, I\!I\}$ for the upper and lower triangles. It is, however, possible to assign different combinations of convex weights to each triangle, for example on  non-rectangular meshes over more complex geometries.

\paragraph{Input trace matrices and effort maps.}
Identifying the elements of the  input vector $\vec u = \bmat{ (\vec u^p)^T& (\vec u^q)^T}^T = \bmat{ (\hat{\vec e}^b)^T & (\vec e^b)^T}^T$ with effort degrees of freedom on the boundary nodes and edges corresponds to a consistent imposition of the effort boundary conditions in the finite-dimensional model. To arrive at the input-output representation \eqref{c04-eq:Dirac-structure-io}, the matrices
\begin{equation}
    \label{eq:permutation-matrices}
    \vec \Pi_p := \bmat{\vec P_{ep}\\ \hat{\vec T}_p} \quad
    \text{and} \quad 
    \vec \Pi_q := \bmat{\vec P_{eq}\\ {\vec T}_q}
\end{equation}
should be square and invertible. With the presented choice,  $\vec \Pi_p$ and $\vec \Pi_q$ become \emph{permutation} \emph{matrices} and the property $\vec \Pi_{p/q}^{-1} = \vec \Pi_{p/q}^{T}$  makes the matrices of the state space model as indicated in  \eqref{eq:matrices-state-space-model} particularly simple.

\paragraph{Flow/state maps.} By the presented construction,  each element $\tilde p_i$, $i=1,\ldots, \tilde N_p$, of $\tilde{\vec p} = \vec P_{fp} {\vec p}$ is related to a $2$-chain (a weighted formal sum of $2$-simplices), located around the node associated to $\tilde e_i^p$. The node and the weighted $2$-chain can be considered as \emph{topologically dual} objects. The property  $\alpha_\nu + \beta_\nu + \gamma_\nu = 1$,  $\nu \in \{I, I\!I\}$ ensures that the balance of the discrete conserved quantities
\begin{equation}
    \sum_{i=1}^{\tilde N_p} \tilde p_i 
    =
    \sum_{j=1}^{N_p} p_j - \epsilon_p
\end{equation}
holds. If (boundary) input nodes are defined, the error $\epsilon_p \neq 0$ occurs, because the weighted contribution of $p_j$ on $2$-simplices next to the input nodes is neglected in  the definition of discrete states. It is easy to imagine that the error $\epsilon^p$, which tends to zero with grid refinement, can be related to well-known effects from the discretization with staggered grids, like ghost values, see e.\,g. \cite{kotyczka2017discrete} for a discussion from the PH point of view.

A related interpretation of the (minimal) states in terms of topologically dual objects  holds for the different elements of the  vector $\tilde{\vec q} = \vec P_{fq} \vec q$. As shown in Fig. \ref{fig:q-tilde}, each element $\tilde q_i$ of $\tilde{\vec q}$ can be considered dual to a discrete effort $\tilde e_i^q$ on a horizontal, vertical or diagonal edge (drawn in red). $\tilde q_i$ is localized on a formal sum of the adjacent $1$-simplices (edges), which can be decomposed  into components across and along the effort edge and a rotational part. Only the ``across'' part contributes to the discrete constitutive equations as discussed in the next section. While the effort edges are considered \emph{outer oriented} (``across''), the formal sums of edges, on which the $\tilde q_i$ are defined, are \emph{inner oriented} (``along''), which describes the geometric nature of the different system variables.

\begin{rmk}
     The reconstruction of the rotational components of $\tilde{\vec q}$ from the given quantities can be used to discretize the vorticity term in the shallow water equation \eqref{c04-eq:swe-ode-1}.
\end{rmk}

\paragraph{Power-conjugated discrete outputs.}
Like the minimal flows and efforts, the discrete power-conjugated outputs $\vec f^b = \vec S_p \vec e^p$ and $\hat{\vec f}^b = \hat{\vec S}_q \vec e^q$ are constructed as weighted sums of the discrete efforts in the vicinity of the corresponding boundary input. The components $f_i^b$ are defined by a convex sum of node efforts, see e.\,g. \eqref{eq:Sp-2x1-grid-new}. The $\hat f_i^b$ are composed of rotational parts and a component associated to the neighbouring, outer oriented boundary edge, as illustrated in Fig. \ref{fig:fb-hat}.

If the effort maps and input trace matrices form permutation matrices \eqref{eq:permutation-matrices}, the feedthrough matrices in the PH state space model according to \eqref{c04-eq:Dirac-structure-io} become $\vec D_q = \hat{\vec S}_q \vec T_q^T$ and $\vec D_p = \vec S_p \hat{\vec T}_p^T$. By the collocated  construction of $\vec f^b$ and $\hat{\vec f}^b$, these matrices have only non-zero elements at the interfaces between two boundary regions $\Gamma_i$ and $\hat \Gamma_j$ with different causality. This feedthrough is \emph{physical} as it only stems from the definition of neighboring in- and outputs, and can be completely avoided by setting the boundary inputs zero at these interfaces. For 1D systems, where the two parts of the boundary are not connected, \emph{no feedthrough term} occurs at all. The absence of an undesired direct feedthrough (undesired at least for the numerical approximation of hyperbolic systems) distinguishes our method from the structure-preserving discretization according to \cite{golo2004hamiltonian}, where the feedthrough stems from the convex sum of nodal efforts to define the discrete co-state variables.

\subsubsection{Discrete constitutive equations}
\label{subsubsec:discrete-constitutive-eq}

To obtain a consistent numerical approximation of the system of conservation laws, the discrete states $\tilde{\vec p}$, $\tilde{\vec q}$ and the efforts or co-states $\tilde{\vec e}^p$, $\tilde{\vec e}^q$ must be related via discrete constitutive relations that are consistent with the continuous ones. We consider the case of linear constitutive equations with Hamiltonian density $\mathcal H = \frac{1}{2} p \wedge \ast p + \frac{1}{2} q \wedge \ast q$  and $e^p = \ast p$, $e^q = - \ast q$. The discrete constitutive equations will be expressed by
\begin{equation}
    \tilde {\vec e}^p = \vec Q_p \tilde {\vec p}, \qquad 
    \tilde {\vec e}^q = \vec Q_q \tilde {\vec q}
\end{equation}
with positive definite, diagonal matrices $\vec Q_p$, $\vec Q_q$ that represent \emph{diagonal discrete Hodge operators} \cite{specogna2015diagonal}. The discrete states $\tilde{\vec p}$ and $\tilde{\vec q}$ are constructed (as $\tilde{\vec f}^p$ and $\tilde{\vec f}^q$) as linear combinations of integral conserved quantities on the $2$- and $1$-simplices of the discretization grid. The faces, based on which $\tilde p_i$ is constructed, surround the node to which $\tilde e_i^p$ is associated. A similar \emph{geometric} duality\footnote{This geometric duality is immediately given if the two conservation laws are modeled on \emph{two} shifted grids, i.\,e. dual meshes \cite{seslija2014explicit}.} can be observed for the $\tilde e_i^q$-edges and the neighbouring edges that constitute $\tilde q_i$. One can even imagine $\tilde{\vec p}$, $\tilde{\vec q}$ localized on a (virtual) \emph{dual} grid, whose localization and shift with respect to the original (primal) grid are \emph{parameterized} by the convex set of mapping parameters  $(\alpha_j, \beta_j, \gamma_j)$, which we assume all to be positive and related via $\alpha_j + \beta_j + \gamma_j = 1$. Moreover, we consider a mesh with equal step size $h_x = h_y = h$ in both coordinate directions.

For the consistent discretization of the time-invariant constitutive equations, we consider a steady state. In this case, the elements of $\tilde{\vec e}^p$ must represent ``average'' values of $p$ on the \emph{weighted} sum of balance areas\footnote{Precisely, the average value of the coefficient function of the $2$-form $p$.}  on which the states $\tilde p_i$  are defined. The diagonal matrix $\vec Q_p$ with elements
\begin{equation}
    [\vec Q_p]_{i,i} = \frac{2}{h^2 \sum_{j=1}^{N_p} [\vec P_{fp}]_{i,j}  }, \qquad i=1,\ldots, \tilde N_p
\end{equation}
represents a consistent Hodge matrix.

Accordingly, the elements of $\tilde{\vec e}^q$ must reflect the integral flux of the vector field\footnote{Index raising of the $1$-form $q$.} $q^\sharp$ across the corresponding horizontal, vertical or diagonal edges. Only the parts of $\tilde q_i$, which are associated to the edges \emph{perpendicular} to the $\tilde e^q_i$-edge, contribute to this flux. This  reasoning yields a \emph{diagonal} matrix $\vec  Q_q$ that replaces the Hodge star in \eqref{eq:const-eq-2D} with diagonal elements\footnote{Note that our grids according to Fig. \ref{c04-fig:3x2-mesh} have square cells and  unique orientations of horizontal, vertical and diagonal edges.}
\begin{equation}
        [\vec Q_q]^{hor/ver}_{i,i} = \frac{1}{\sum_{j=1}^{N_q} \left| [\vec P_{fq}^\perp]_{i,j} \right|} \qquad 
        \text{and}
        \qquad 
        [\vec Q_q]^{dia}_{i,i} = \frac{2}{\sum_{j=1}^{N_q} \left| [\vec P_{fq}^\perp]_{i,j} \right|}
        \quad
\end{equation}
for the efforts on horizontal/vertical and diagonal edges, respectively.

\subsubsection{Simulation study}
We consider the linear wave equation in port-Hamiltonian form \eqref{eq:f-d-e-2D}--\eqref{eq:dynamic-eq-2D}  on a square domain $\Omega = (0,20) \times (0,20)$ to illustrate the effects of different mapping parameters. We impose the boundary conditions
\begin{equation}
    \begin{split}
        e^p(0,0) = u(t) =
        \begin{cases} 
            \sin^2(\frac{\pi}{8} t), & 0 \leq t < 8\\
            0                          & t \geq 8
        \end{cases}, \qquad
        e^q(x,y) = 0 \quad \text{on} \quad  \partial \Omega.
    \end{split}
\end{equation}
by the input trace matrices
\begin{equation}
    \hat{\vec T}_p = \bmat{1 & 0 & \ldots & 0},\qquad \vec T_q = \mathbb{I}_b^1,
\end{equation}
where $\mathbb{I}_b^1 \in \mathbb R^{M_b \times N_q}$ is the matrix composed of unit row vectors associated to boundary edges. The inputs to the simulation model according to Eq. \eqref{eq:ssmodel-x-u-y} are
\begin{equation}
    \hat e^b(t) = u(t), \quad \vec e^b(t) = \vec 0.
\end{equation}

\begin{table}[h]
\caption{Parameter sets used in the simulations.}
\label{tab:projection-paramters}
\centering
{\small
\begin{tabular}{c|ccc|cc|ccc|cc}
    & $\alpha_I$ & $\beta_{I}$ & $\gamma_{I}$ & $\delta_{I}$ & $\varepsilon_{I}$ & $\alpha_{I\!I}$ & $\beta_{I\!I}$ & $\gamma_{I\!I}$ & $\delta_{I\!I}$ & $\varepsilon_{I\!I}$ \\
    \hline
    \#1 & $1/3$ & $1/3$ & $1/3$ & $1/8$ & $1/8$ & $1/3$ & $1/3$ & $1/3$ & $1/8$ & $1/8$ \\ 
    \#2 & $1/2$ & $1/4$ & $1/4$ & $3/16$ & $1/16$ & $1/4$ & $1/2$ & $1/4$ & $1/16$ & $3/16$  \\ 
    \#3 & $2/3$ & $1/12$ & $1/4$ & $13/48$ & $-1/48$ & $1/12$ & $2/3$ & $1/4$ & $-1/48$ & $13/48$ \\ 
    \#4 & $15/16$ & $1/32$ & $1/32$ & $45/128$ & $-13/128$ & $1/32$ & $15/16$ & $1/32$ & $-13/128$ & $45/128$  \\
    \hline
\end{tabular}
}
\end{table}

\begin{figure}[h]
    \includegraphics[width=0.45\textwidth]{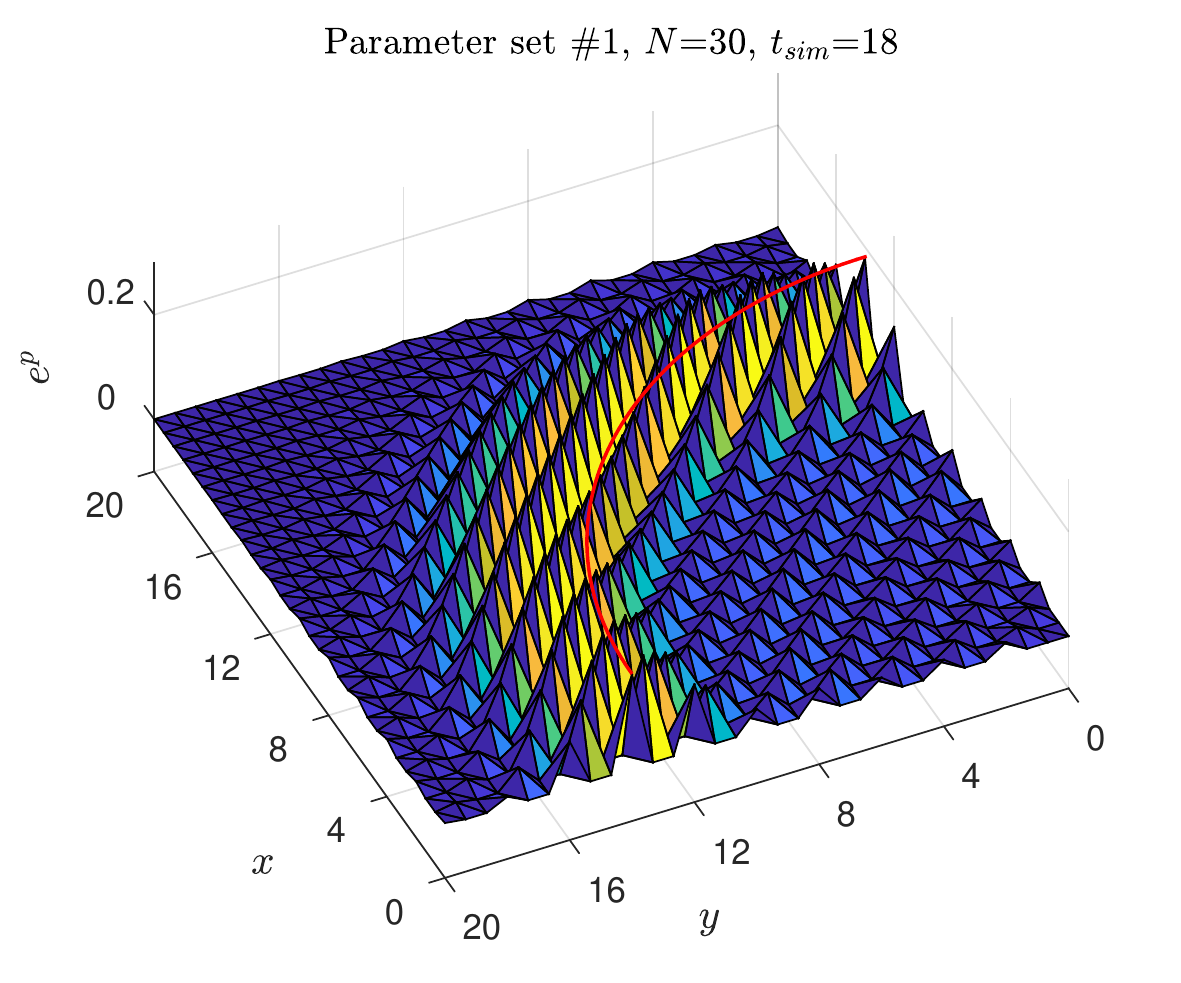}
    \includegraphics[width=0.45\textwidth]{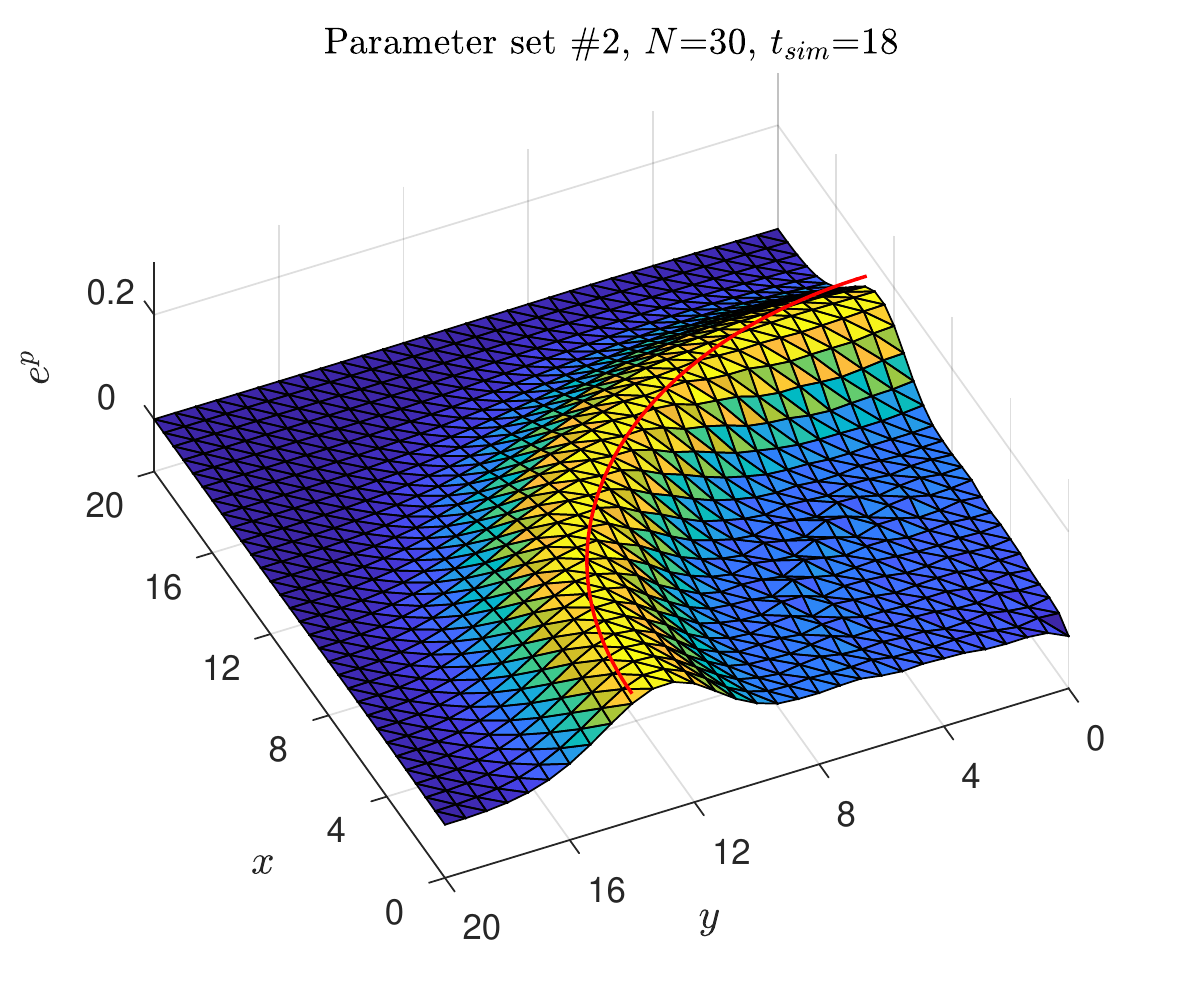}

    \includegraphics[width=0.45\textwidth]{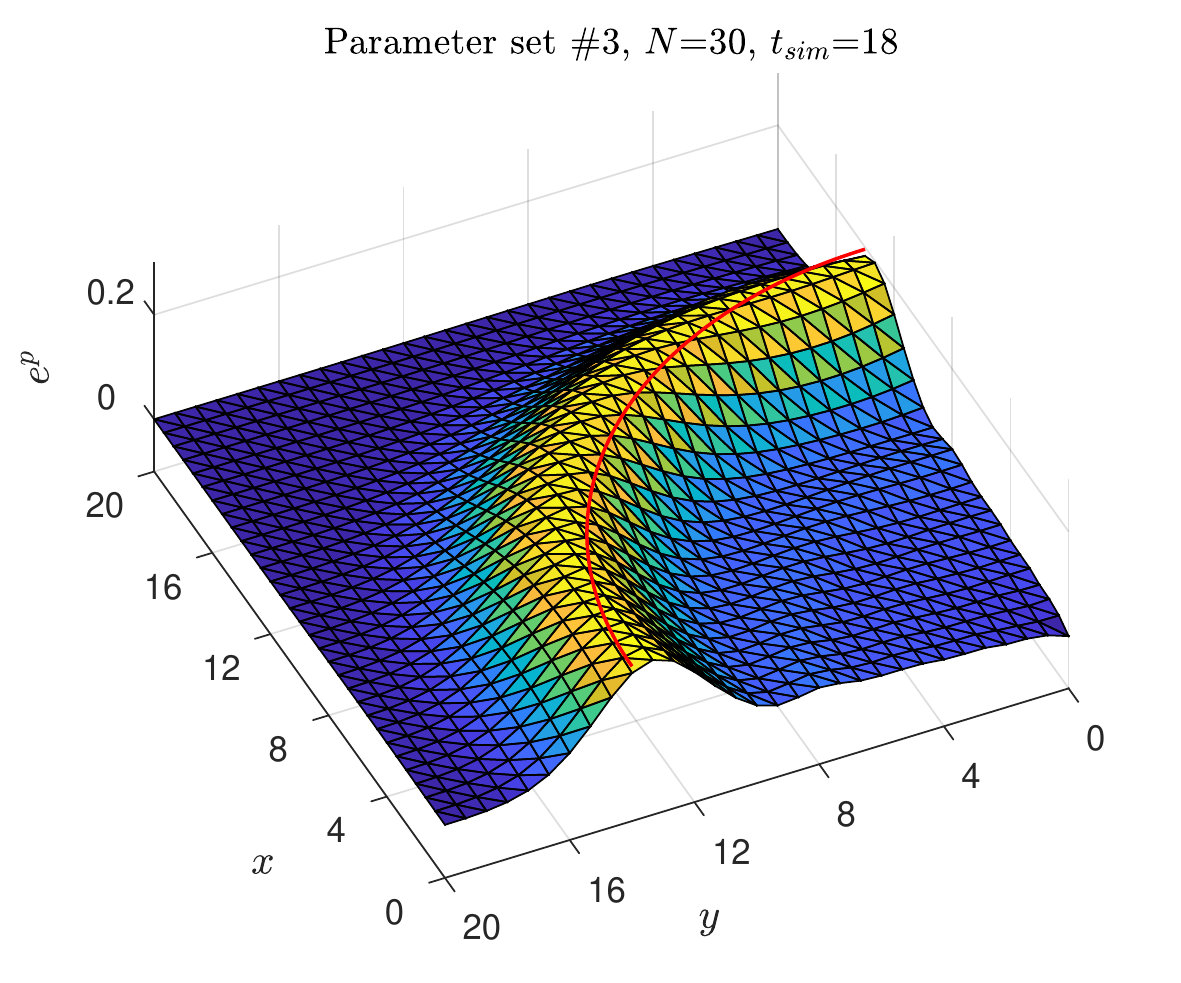}
    \includegraphics[width=0.45\textwidth]{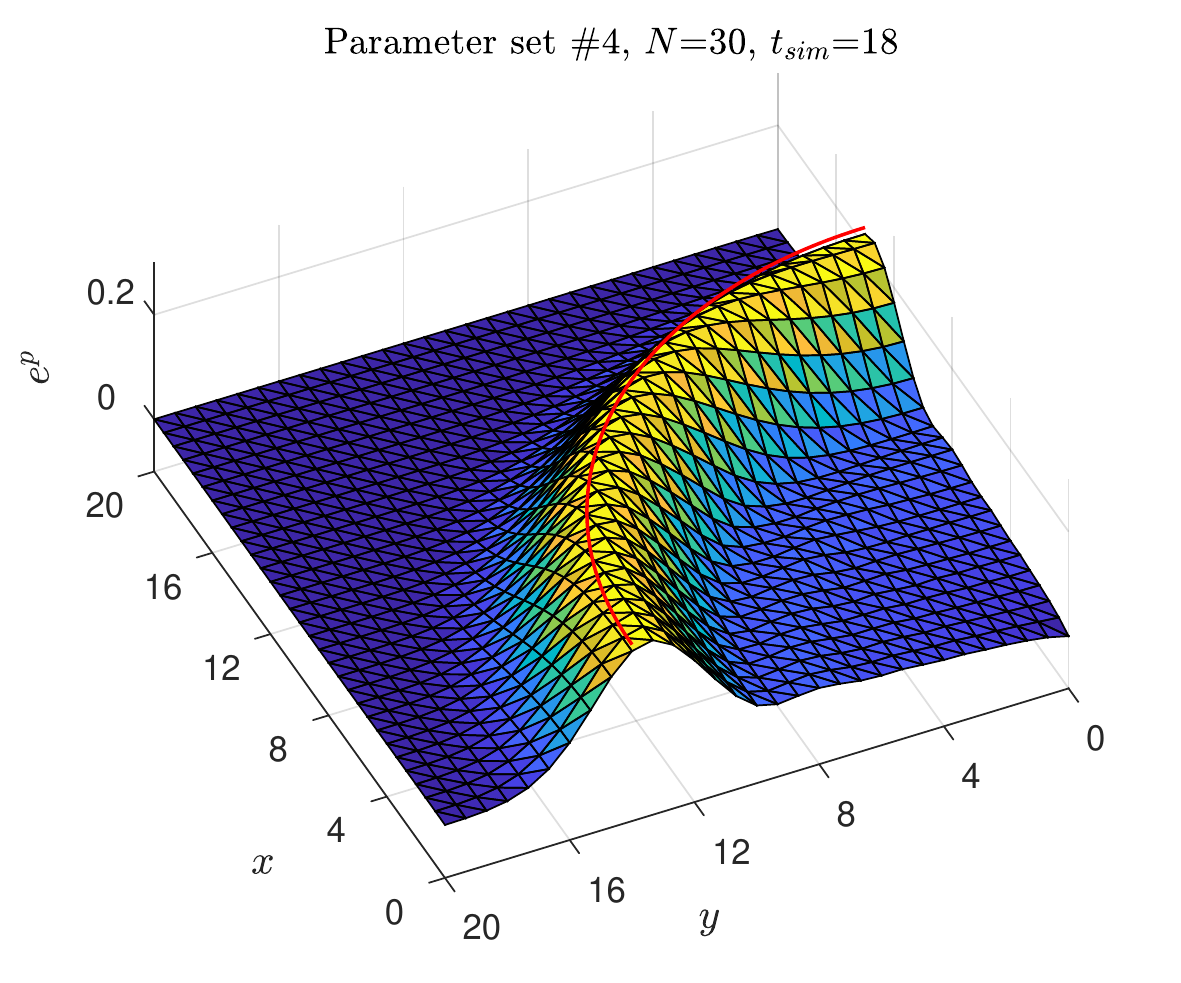}

    \caption{Propagation of a wave due to point-wise boundary excitation under different parametrizations of the method. Snapshots at $t_{sim} = 18$.}
    \label{sim-pressure-wave}
\end{figure}

Fig. \ref{sim-pressure-wave} shows the simulated propagation of the wave in radial direction under different parametrizations of the method, see Table \ref{tab:projection-paramters}. The red line displays a circle with radius $t_{sim} - T/2 = 14$, as a reference for the maximum of the wave front\footnote{The plots in Fig. \ref{sim-pressure-wave} represent the discrete, minimal efforts $\tilde{e}_i^p$ \emph{in the nodes} of the mesh.} at time $t_{sim}$, based on the exact solution. 
The parameter sets in Table \ref{tab:projection-paramters} represent different weightings of the $2$-simplices in the propagation direction to compute $\tilde f^p_i$, see Fig. \ref{fig:illustration-Pfp}. For parameter set \#1 (equal weights $1/3$ in the definition of $\tilde f^p_i$ associated to a nodal effort $\tilde e^p_i$), the propagation of the wave front in the effort variable $e^p$ is reproduced in a completely unsatisfactory manner. Parameter set \#2 leads also to undesired dispersion. Moreover, the quarter circle shape of the wave is perturbed, which is due to the non-isotropic mesh and the inadequate parametrization. Parameter set \#3 shows less dispersion and parametrization \#4 reproduces appropriately the circular wave front

A direct explanation of the unsatisfactory behaviour of the numerical solutions \#1 and \#2 can be found by studying the definition of the matrix $\vec P_{fq}^\perp$, which is visualized in the upper drawings of Fig. \ref{fig:q-tilde} for the elementary example. Consider first the parametrizations \#3 and \#4 in Table \ref{tab:projection-paramters}. With 
\begin{equation}
    \label{eq:condition-delta-epsilon}
    \textrm{sgn}(\delta_I) = -\textrm{sgn}(\delta_{I\!I}) \qquad \text{and} \qquad \textrm{sgn}(\varepsilon_I) = -\textrm{sgn}(\varepsilon_{I\!I}),
\end{equation}
the rotational parts in the definition of the discrete states $\tilde{\vec q}$ are composed of discrete rotations of $\vec q$ \emph{in the same sense}. This is not the case for parametrizations \#1 and \#2, which is a hint that reasonable parameter sets for the numerical approximation of hyperbolic systems should satisfy condition \eqref{eq:condition-delta-epsilon}, or, equivalently, $\alpha_I - \beta_I \lessgtr \frac{1}{2}$ and at the same time $\alpha_{I\!I} - \beta_{I\!I} \gtrless \frac{1}{2}$.

Note that all four simulation models are \emph{stable by construction}. In the following section, we discuss the quality of the numerical scheme in terms of the eigenvalue approximation. The 1D case allows in a straightforward manner to implement negative values for the mapping parameter and thereby enforce \emph{upwinding} in the numerical solution.

\begin{figure}[h]
    \centering
    \includegraphics[scale=0.75]{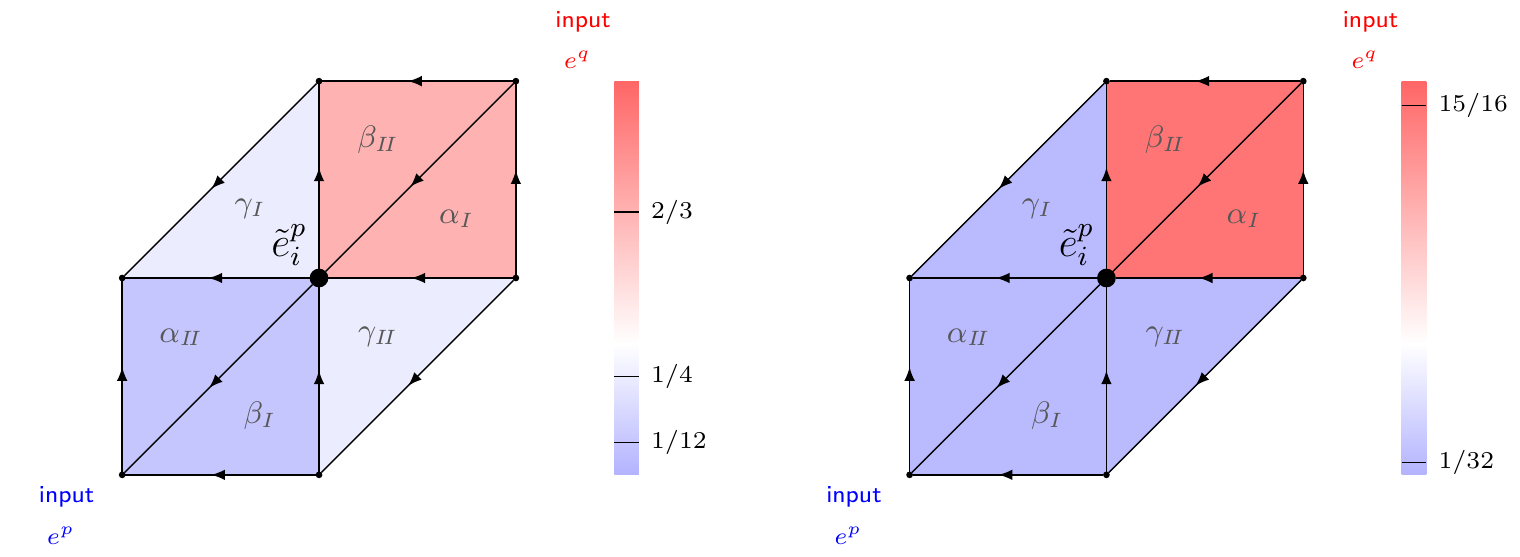}
    \caption{Illustration of $2$-simplices and weights that contribute to the definition of the discrete state $\tilde p_i$ for the parametrizations \# 3 (left) and \#4 (right). Note that information of the conserved quantity $p$, which is directly influenced by the boundary input effort $e^q$ in the upper right corner, is preferred for the computation of the node effort $\tilde e^p_i$ (``upwinding'').}
    \label{fig:illustration-Pfp}
\end{figure}

\subsection{Eigenvalue approximation for the 1D wave equation}

\label{subsec:spectral-approx}
For a short analysis of the spectral approximation properties, we consider the 1D wave equation on a domain $\Omega = (0,1)$. The degrees of the differential forms for both conserved quantities are $p=q=1$, the exponent in the canonical differential operator of Eq. \eqref{eq:stokes-dirac-interconnection} becomes $r=pq+1=2$. With the quadratic Hamiltonian $H = \int_0^1 (\frac{1}{2} p \wedge \ast p + \frac{1}{2} q \wedge \ast q)$, the PH representation with boundary efforts and flows as defined in Eq. \eqref{eq:boundary-flows-efforts-1D}, is
\begin{equation}
	\bmat{\partial_t p\\ \partial_t q}
		=
	\bmat{0 & - \mathrm{d}\\ - \mathrm{d} & 0} 
    \bmat{e^p\\ e^q}, \qquad 
    \vec e^\partial = \bmat{ e^q(0)\\ -e^p(L)}, \quad
    \vec f^\partial = \bmat{ e^p(0)\\ e^q(L)}, \qquad
    e^p = \ast p, \quad e^q = \ast q.
\end{equation} 
The system corresponds to the transmission line model in Example \ref{exa:1D-transmission-line} with length $L=1$ and $l(z)=c(z)=1$. We consider two identical grids with $N+1$ nodes and $N$ edges for both conserved quantities, with the corresponding Whitney node and edge forms to approximate the spatial distribution of effort $0$-forms and flow $1$-forms. According to the choice of boundary efforts (= boundary inputs), we set the boundary input matrices (trace matrices)
\begin{equation}
    \vec T_q = \bmat{1 & 0 & \ldots & 0} \in \IR^{1\times (N+1)}, \qquad 
    \hat {\vec T}_p = \bmat{0 & \ldots & 0 & - 1} \in \IR^{1\times (N+1)}.
\end{equation}
Together with the effort maps 
\begin{equation}
    \vec P_{eq} = \bmat{\vec 0_{N\times 1} & \vec I_N}, \qquad
    \vec P_{ep} = \bmat{\vec I_N & \vec 0_{N\times 1}},
\end{equation}
all effort degrees of freedom are assigned either the roles of \emph{inputs} $e^b = \vec T_q \vec e^q \in \IR$ and $\hat e^b = \hat{\vec T}_p \vec e^p \in \IR$ or \emph{interior} discrete efforts $\tilde{\vec e}^q = \vec P_{eq} \vec e^q \in \IR^N$ and $\tilde{\vec e}^p = \vec P_{ep} \vec e^p \in \IR^N$. With the discrete derivative matrices (or co-incidence matrices)
\begin{equation}
    \vec d_p = \vec d_q = \bmat{-1 & 1 & & \\ & \ddots & \ddots & \\ & & -1 & 1} \in \IR^{N\times(N+1)},
\end{equation}
we obtain 
\begin{equation}
    \vec P_{fq} = \vec P_{fp}^T = \bmat{1-\alpha & \alpha & & \\  & \ddots & \ddots & \\ & & 1-\alpha & \alpha \\ & & & 1-\alpha} \in \IR^{N\times N}
\end{equation}
as flow maps and 
\begin{equation}
    \vec S_p = \bmat{1-\alpha & \alpha & 0 & \ldots & 0} \in \IR^{1\times (N+1)}, \qquad
    \hat{\vec S}_q = \bmat{0 & \ldots & 0 & \alpha & 1-\alpha} \in \IR^{1\times (N+1)}
\end{equation}
as boundary output matrices, which verify the matrix equation \eqref{c04-eq:charact-projections} and define $\tilde{\vec f}^q = \vec P_{fq} \vec f^q \in \IR^n$, $\tilde{\vec f}^p = \vec P_{fp} \vec f^p \in \IR^n$ and $f^b = \vec S_p \vec e^p \in \IR$, $\hat f^b = \hat{\vec S}_q \vec e^q \in \IR$. The consistent approximation of the constitutive equations, which takes into account the definition of co-states and the mapping of the discrete states, is expressed by the diagonal Hodge matrices (in accordance to Section \ref{subsubsec:discrete-constitutive-eq} for the 2D case)
\begin{equation}
    \label{eq:Hodge-matrices}
    \vec Q_p = \frac{1}{h} \textrm{diag} \{ \frac{1}{1-\alpha}, 1, \ldots, 1  \}, 
    \qquad 
    \vec Q_q = \frac{1}{h} \textrm{diag} \{ 1, \ldots, 1 , \frac{1}{1-\alpha} \}.
\end{equation}

\begin{figure}
    \centering
    \includegraphics[scale=0.8]{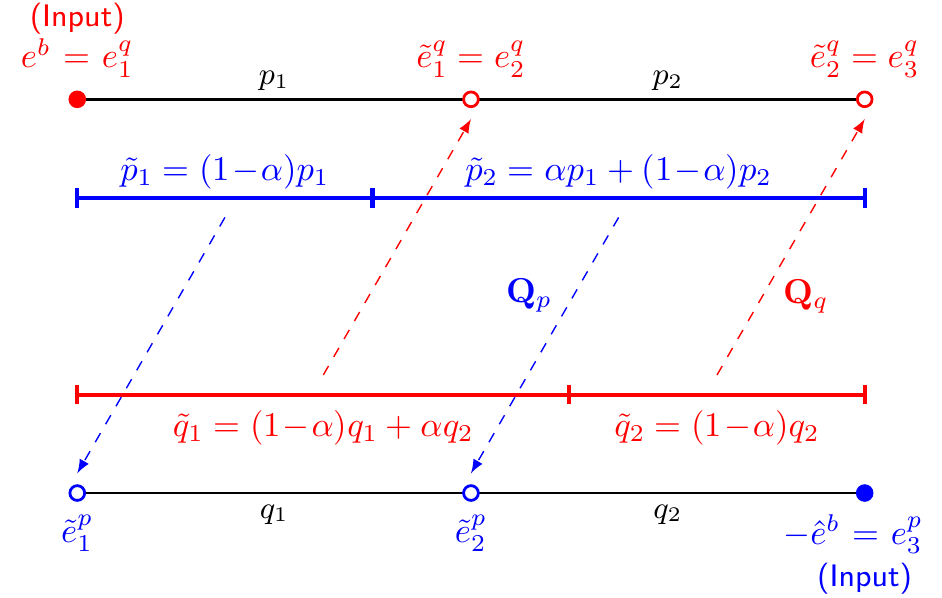}   \quad
    \includegraphics[scale=0.8]{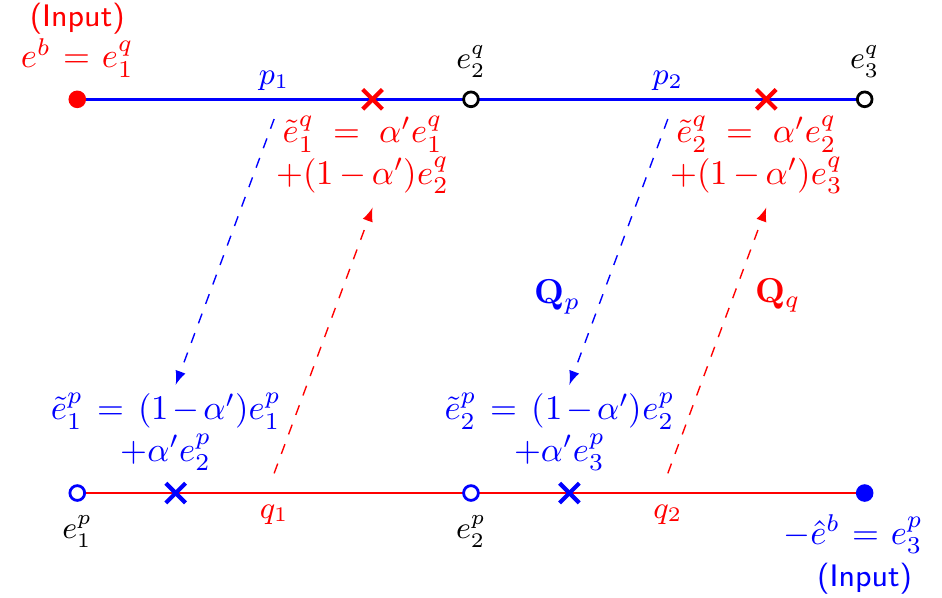}   
    \caption{Illustration of the difference between our approach and the method according to \cite{golo2004hamiltonian}. In our approach, the discrete efforts at the \emph{interior} nodes are computed -- via appropriate discrete Hodge matrices -- based on \emph{convex sums} of the original discrete states (left sketch). Following \cite{golo2004hamiltonian}, the original discrete states remain unchanged, but the \emph{co-states} are computed as convex sums of the node efforts (right sketch). For $\alpha = 0$ and $\alpha'=0$ both methods coincide. Then, for example, the co-state $\tilde e_1^q = e_2^q$ is determined based on the original discrete state $q_1$ in both cases.}
    \label{fig:upwind}
\end{figure}

We compare the results of our method with those obtained with the approach in \cite{golo2004hamiltonian}, where mapping the \emph{efforts} at the boundary nodes of each discretization interval using a parameter\footnote{We use a prime to distinguish from the $\alpha$ in our method.} $\alpha'$ yields non-degenerate power pairings and a PH model in state space form. The \emph{strong} compatibility conditions, which restrict the parameter value to $\alpha'=\frac{1}{2}$ for the case of lowest order Whitney forms in the original work, can be relaxed by a weak formulation of the problem. In contrast to our method, the state space models according to \cite{golo2004hamiltonian} feature a \emph{direct feedthrough}.\footnote{The exception with zero feedthrough matrix is $\alpha' = 0$, which corresponds to $\alpha = 0$ in our approach. With these parameter values, both methods produce models that coincide with those obtained from discrete modeling / finite volumes on regularly staggered grids \cite{seslija2014explicit}, \cite{kotyczka2016finite}.} The fundamental difference between both approaches is illustrated by the sketches in Fig. \ref{fig:upwind} and the explanation below. The discrete Hodge matrices according to \cite{golo2004hamiltonian} are $\vec Q_p = \vec Q_q = \frac{1}{h} \textrm{diag} \{ 1, \ldots, 1  \}$. For $\alpha < \frac{1}{2}$ and $\alpha' < \frac{1}{2}$, the state information from the directions in which the associated effort variables \emph{are imposed} as boundary inputs, obtains a higher weight. This type of \emph{upwinding} leads to a very good approximation of the eigenvalues for values close to zero of $\alpha$ and $\alpha'$.

We consider the spectrum of the canonical differential operator of the Stokes-Dirac structure under homogeneous Dirichlet boundary conditions on the efforts (Neumann-Dirichlet conditions for the PDE in second order form). The exact eigenvalues are $\pm \frac{2k-1}{2} \pi i$, $k=1,2,3,\ldots$, see \cite{hamroun2009approche}. As the structure-preserving discretization is conservative, also the approximate eigenvalues have zero real parts. We display in Table \ref{tab:spectrum} the imaginary parts for different values of the \emph{flow} mapping parameter $\alpha$. Table \ref{tab:spectrum-golo} shows the corresponding values for the structure-preserving discretization according to \cite{golo2004hamiltonian} with different \emph{effort} mapping parameters $\alpha'$. The relative errors for the first, 5th and 20th eigenvalue are plotted in the diagrams of Figs. \ref{fig:error-plots-our-method} and \ref{fig:error-plots-golo}.

For all displayed parametrizations around $\alpha = \alpha' = 0$, the order of the first eigenvalue approximation error is $\mathcal O(h)$ with $h = \frac{1}{N}$, see the left diagrams in Figs. \ref{fig:error-plots-our-method} and \ref{fig:error-plots-golo}. This is in accordance with the consistency order $1$ for the \emph{non-centered} approximation of the node efforts (see \cite{kotyczka2017discrete} for the discussion from the finite volumes point of view). 
We observe that for the parametrizations $\alpha = - \frac{1}{12}$ and $\alpha' = \frac{1}{12}$, the approximation quality of the higher eigenvalues is improved. The result of this \emph{upwinding} compared to the situation $\alpha=0$/$\alpha'=0$ is a remarkable improvement of the solution of the boundary value problem\footnote{The same holds for the initial value problem, which is not illustrated here.} as shown in the previous section for the 2D case. Note that the same effect can be achieved if in the finite volume approach on regularly staggered grids \cite{kotyczka2016finite} (which corresponds to $\alpha=0$/$\alpha'=0$), the control volumes to compute the numerical fluxes are shifted. Tables \ref{tab:spectrum} and \ref{tab:spectrum-golo} as well as Figures \ref{fig:error-plots-our-method} and \ref{fig:error-plots-golo} show a very similar evolution of the eigenvalues under grid refinement. Note however, that our approach, in contrast to \cite{golo2004hamiltonian}, produces no structural feedthrough, which is \emph{appropriate} for hyperbolic systems\footnote{The feedthrough, together with the over-estimation of the highest eigenvalues for $\alpha' \rightarrow 0.5$, fits to the good results the method according to \cite{golo2004hamiltonian} achieves for the discretization of \emph{parabolic} systems \cite{baaiu2009structure}, where the instantaneous propagation of information must be approximated.}. Moreover, as has been shown in the previous section, the extension to 2D (and prospectively 3D) is straightforward.

\begin{table}
\centering
\caption{Eigenvalue imaginary parts for the discretized 1D wave equation with different flow mapping parameters $\alpha$ and grid sizes $N$, compared to the exact values.}
{\small
\begin{tabular}{c|c|ccc|ccc|ccc}
    \hline
    $k$    &  Exact   & \multicolumn{3}{c|}{$\alpha=-1/12$} & \multicolumn{3}{c|}{$\alpha=0$} & \multicolumn{3}{c}{$\alpha=1/6$} \\ 
    & & $N=20$ & $N=40$ & $N=80$  & $N=20$ & $N=40$ & $N=80$  & $N=20$ & $N=40$ & $N=80$  \\ \hline
  1 & 1.5708 & 1.5263 & 1.5482 & 1.5594 & 1.5321 & 1.5513 & 1.5610 & 1.5440 & 1.5576 & 1.5642 \\ 
  2 & 4.7124 & 4.5798 & 4.6449 & 4.6783 & 4.5873 & 4.6516 & 4.6825 & 4.6074 & 4.6663 & 4.6910 \\ 
  3 & 7.8540 & 7.6352 & 7.7422 & 7.7974 & 7.6156 & 7.7449 & 7.8021 & 7.5975 & 7.7562 & 7.8131 \\ 
  4 & 10.996 & 10.692 & 10.840 & 10.917 & 10.599 & 10.827 & 10.919 & 10.468 & 10.815 & 10.927 \\ 
  5 & 14.137 & 13.746 & 13.939 & 14.036 & 13.521 & 13.892 & 14.031 & 13.176 & 13.830 & 14.030 \\ 
 10 & 29.845 & 28.593 & 29.428 & 29.641 & 26.613 & 28.814 & 29.490 & 23.192 & 27.852 & 29.269 \\ 
 20 & 61.261 & 46.492 & 59.316 & 60.828 & 39.883 & 54.899 & 59.422 & 26.831 & 47.252 & 57.198 \\ 
 40 & 124.09 & --     & 93.244 & 120.76 & --     & 79.940 & 111.47 & --     & 53.664 & 95.338 \\ 
 80 & 249.76 & --     & --     & 186.62 & --     & --     & 159.97 & --     & --     & 107.33 \\ 

    \hline
\end{tabular}
}
\label{tab:spectrum}
\end{table}
\begin{figure}
    \centering
    \includegraphics[scale=0.52]{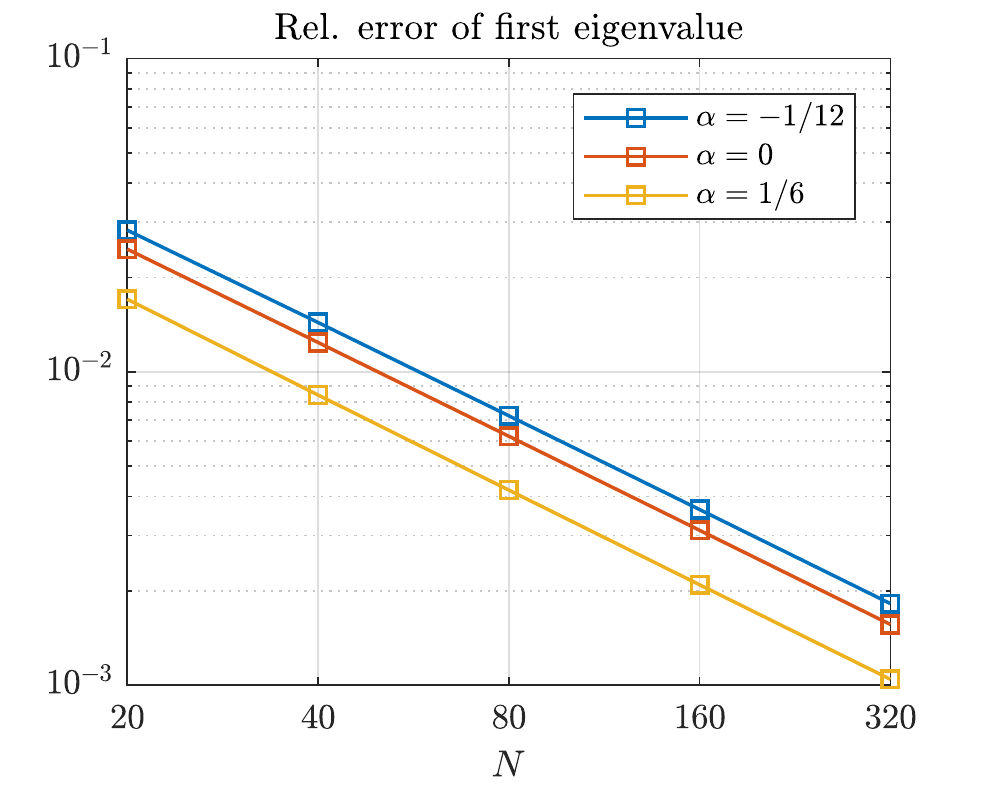}   
    \includegraphics[scale=0.52]{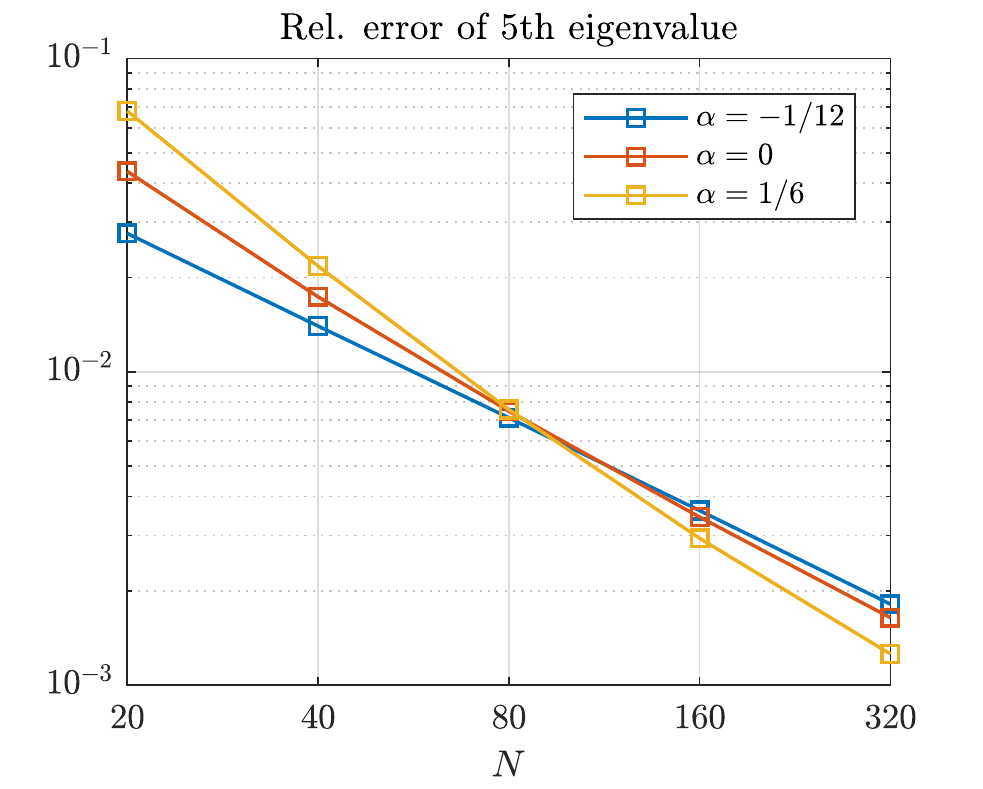}   
    \includegraphics[scale=0.52]{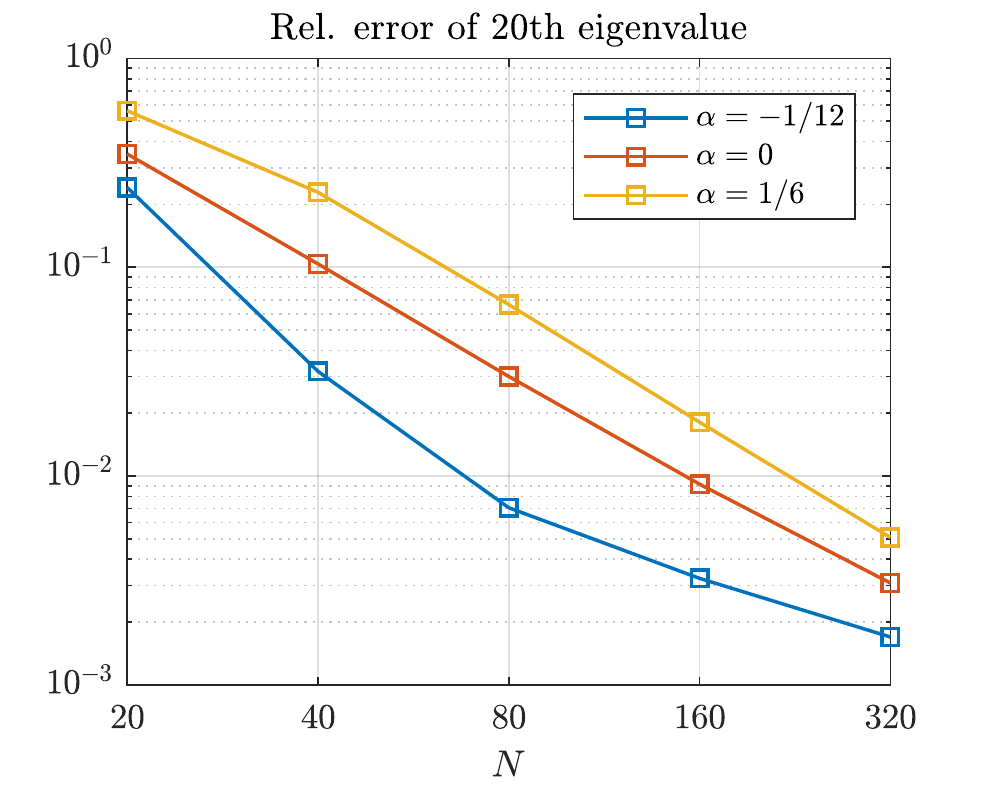}   
    \caption{Magnitude of the approximation error over different grid sizes for the 1st, 5th and 20th eigenvalue of the canonical system operator. Upwinding improves the approximation of the higher eigenvalues.}
    \label{fig:error-plots-our-method}
\end{figure}

\begin{rmk}
    In the presented context, it makes sense to talk of \emph{upwinding} for values of $\alpha < \frac{1}{2}$. With the parametrization $\alpha = \frac{1}{2}$, the co-states are computed based on the equally weighted information of the states to the left and to the right of the considered node. This \emph{centered} evaluation of the discrete constitutive equations leads to order 2 of consistency and the approximation of the eigenvalues. For $\frac{1}{2} < \alpha <1$, the numerical approximation is still in PH form, without numerical dissipation\footnote{For $\alpha=1$, the first or last element respectively of the Hodge matrices according \eqref{eq:Hodge-matrices} becomes singular.}. In the present 1D example, this \emph{upwinding in wrong direction} manifests itself by control input vectors $\vec B_p \in \IR^{2N}$ and $\vec B_q \in \IR^{2N}$ whose second (second last) element have bigger magnitude than the first (the last) element. At the same time, a pair of purely imaginary eigenvalues, which tends to $0$ under grid refinement, is introduced.
\end{rmk}

\section{Conclusions}
\label{sec:conclusions}
We introduced the weak form of the Stokes-Dirac structure with a segmented boundary, on which the causality of the port variables (the assignment as system in- or output) alternates. This Stokes-Dirac structure is the underlying geometric structure to represent power continuity in a port-Hamiltonian distributed parameter system. On the example of a system of two conservation laws with canonical interdomain coupling, we described the mixed Galerkin discretization of the Stokes-Dirac structure in a general way. To obtain finite-dimensional approximate models in PH form with the prescribed boundary inputs -- as basis for the interconnection of multi-physics models, control design and simulation -- we proposed power-preserving mappings on the space of discrete effort and flow variables. These maps allow to define non-degenerate duality pairings, leading to finite-dimensional approximate Dirac structures on the minimal discrete bond space. The Dirac structures admit several representations, one of them being an explicit input-output-representation. Port-Hamiltonian state space models are obtained, if dynamics is added and  the constitutive equations are approximated consistently. On the example of Whitney finite elements we demonstrated the discretization procedure and gave interpretations of the resulting discretization schemes.

\begin{table}
\centering
\caption{A comparable approximation of the eigenvalues can be obtained using the method presented in \cite{golo2004hamiltonian} in a \emph{weak} formulation, which permits to choose parameters other than $\alpha'= \frac{1}{2}$. Note, however, that the resulting state space models, in contrast to our approach, feature a \emph{structural feedthrough}.}
{\small
\begin{tabular}{c|c|ccc|ccc|ccc}
    \hline
    $k$    &  Exact   & \multicolumn{3}{c|}{$\alpha'=1/12$} & \multicolumn{3}{c|}{$\alpha'=0$} & \multicolumn{3}{c}{$\alpha'=-1/6$} \\ 
    & & $N=20$ & $N=40$ & $N=80$  & $N=20$ & $N=40$ & $N=80$  & $N=20$ & $N=40$ & $N=80$  \\ \hline
  1 & 1.5708 & 1.5387 & 1.5546 & 1.5627 & 1.5321 & 1.5513 & 1.5610 & 1.5189 & 1.5447 & 1.5577 \\ 
  2 & 4.7124 & 4.6152 & 4.6636 & 4.6879 & 4.5873 & 4.6516 & 4.6825 & 4.5283 & 4.6266 & 4.6712 \\ 
  3 & 7.8540 & 7.6888 & 7.7719 & 7.8130 & 7.6156 & 7.7449 & 7.8021 & 7.4544 & 7.6858 & 7.7789 \\ 
  4 & 10.996 & 10.757 & 10.879 & 10.938 & 10.599 & 10.827 & 10.919 & 10.250 & 10.708 & 10.877 \\ 
  5 & 14.137 & 13.816 & 13.985 & 14.062 & 13.521 & 13.892 & 14.031 & 12.875 & 13.679 & 13.961 \\ 
 10 & 29.845 & 28.700 & 29.459 & 29.675 & 26.613 & 28.814 & 29.490 & 22.886 & 27.377 & 29.052 \\ 
 20 & 61.261 & 47.800 & 59.416 & 60.773 & 39.883 & 54.899 & 59.422 & 29.950 & 46.903 & 56.384 \\ 
 40 & 124.09 & --     & 95.897 & 120.87 & --     & 79.940 & 111.47 & --     & 59.974 & 94.912 \\ 
 80 & 249.76 & --     & --     & 191.95 & --     & --     & 159.97 & --     & --     & 119.99 \\ 
 \hline
\end{tabular}
}
\label{tab:spectrum-golo}
\end{table}

\begin{figure}
    \centering
    \includegraphics[scale=0.52]{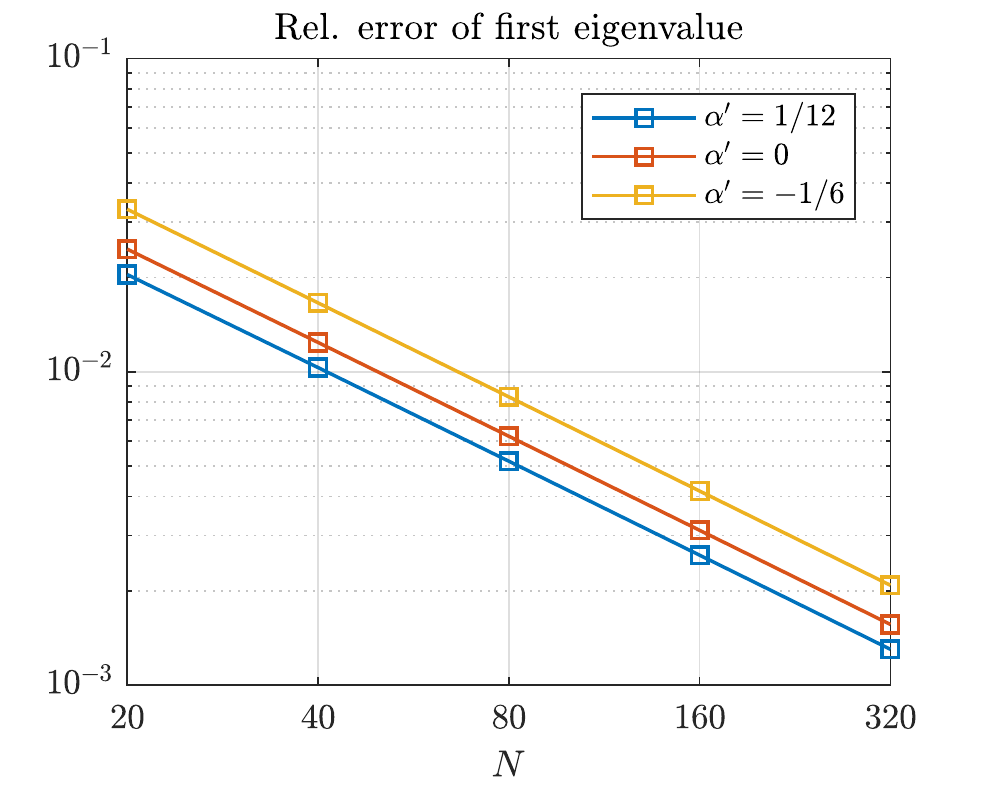}   
    \includegraphics[scale=0.52]{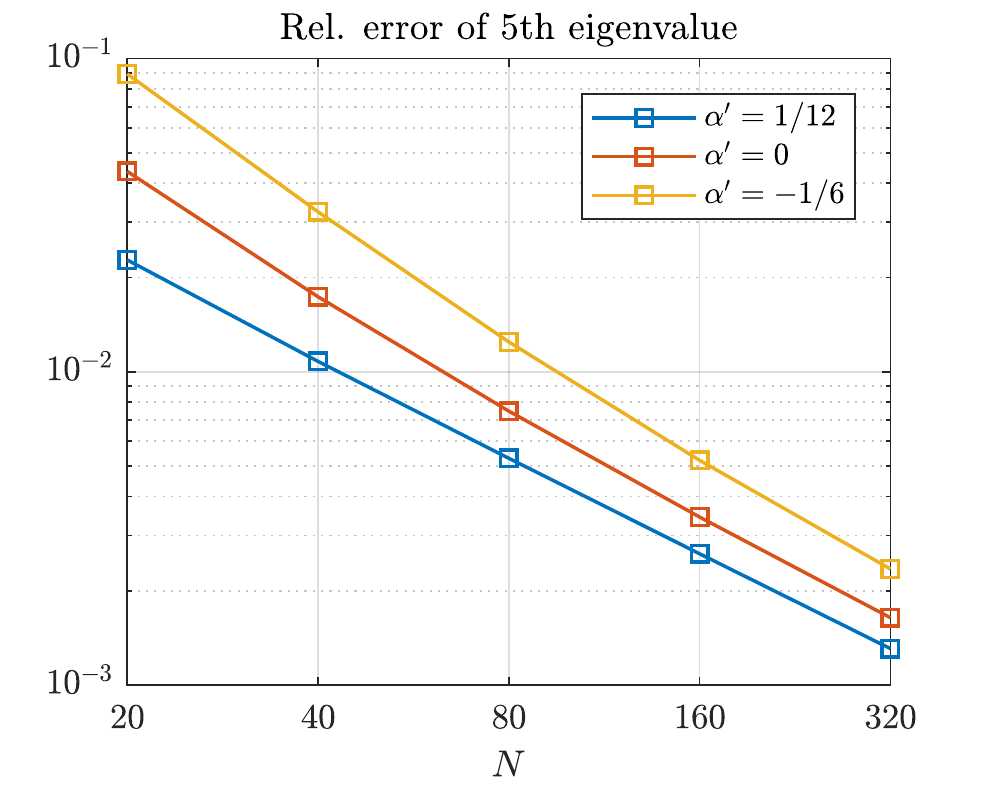}   
    \includegraphics[scale=0.52]{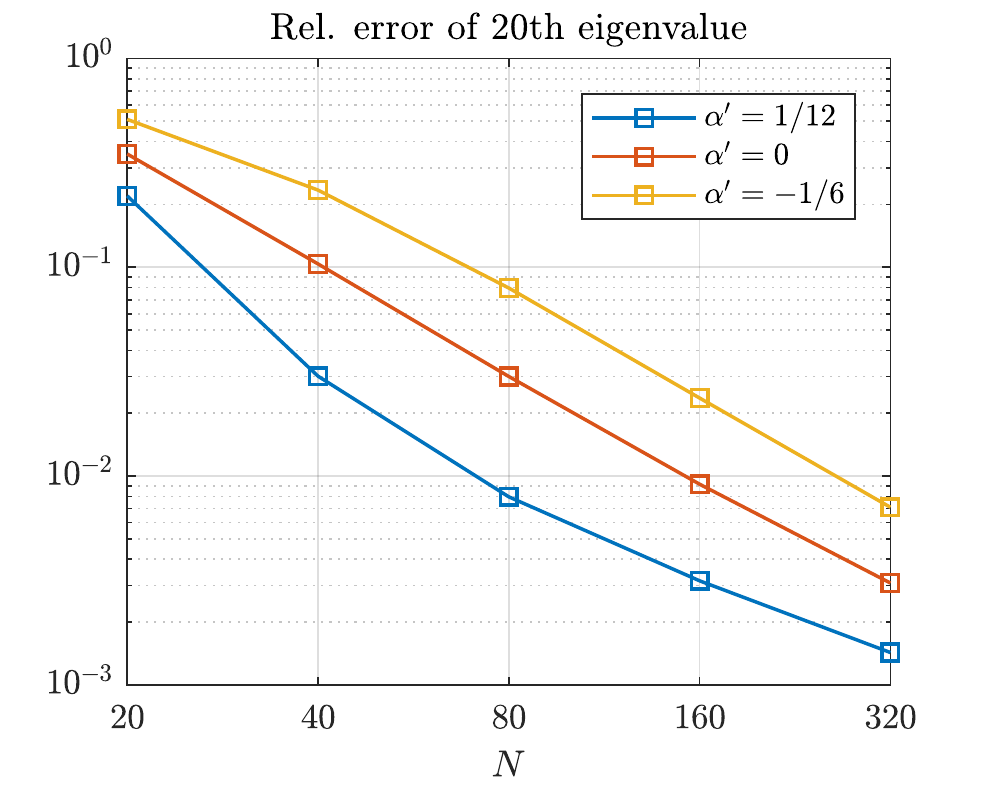}   
    \caption{Magnitude of the approximation error over different grid sizes for the 1st, 5th and 20th eigenvalue of the canonical system operator with the method according to \cite{golo2004hamiltonian}.}
    \label{fig:error-plots-golo}
\end{figure}

The proposed method is, to the best of our knowledge, the first method which allows for a structure-preserving discretization of PH distributed parameter systems in more than one spatial dimension with a systematic treatment of different boundary inputs and the possibility to tune the discretized models between centered schemes and upwinding.  The proposed family of approximation Dirac structures avoids a direct feedthrough in the state space model and the over-estimation of higher frequencies in the approximate spectrum, which is the case for the method presented in \cite{golo2004hamiltonian}, where the efforts instead of the flow degrees of freedom are mapped. The weak form of the Stokes-Dirac structure is the key feature that allows to include additional effects such as dissipation or diffusion or, more generally, to tackle the discretization of PH systems with general and higher order interconnection operators and distributed inputs.

An important difference of our work to related works like \cite{seslija2014explicit}, \cite{farle2013porta}, \cite{hiemstra2014high}, where either dual grids are used \emph{a priori} or at least one conservation law contains the Hodge star or the co-differential, is that our initial discretization is based on a \emph{metric-independent} formulation of the conservation laws.  We approximate all differential forms in the same conforming subspaces depending on their degree (i.\,e. on the same mesh in FE), which has the advantage that boundary variables are defined directly \emph{on} $\partial \Omega$, without having to cope with an eventual grid shift. To obtain an explicit state space model, however, we need -- \emph{no free lunch} -- the power-preserving mappings. These, in turn, give us degrees of freedom to tune the resulting numerical method.

Current and future work concerns the application of the method to the PH representations of systems including heat and mass diffusion phenomena, which share similar Stokes-Dirac structures, as well as coupled heat and mass transport phenomena in non-homogeneous media such as catalytic foams.  Moreover, we want to analyse the approach when applied to PH systems with non-canonical system operators (containing e.\,g. higher order derivatives). In this context, we are interested in the reasonable choice of design parameters in order to adapt the discretization scheme to the physical nature of the system (e.\,g. to account for the ratio between convection and diffusion). This aspect is closely related to the analysis of system-theoretic properties of the discretized models in view of control design. Further important issues are the implementation of the approach in existing finite element tools like FEniCS \cite{alnaes2015fenics} and the use of approximation spaces with higher degree \cite{rapetti2009whitney}, \cite{arnold2013spaces}. We intend to include the discretization of the nonlinear constitutive relations for the 2D shallow water equations in our \emph{open} models and clarify the links with recent work on geometric mixed finite elements like \cite{cotter2012mixed}, \cite{cotter2014finite}, where in- and outputs are not explicitly taken into account, and upwinding in differential forms as presented in \cite{christiansen2013upwinding}.

\section*{Acknowledgement}
This paper was written during the temporary leave (September 2015 -- August 2017) of the first author from Technical University of Munich, Chair of Automatic Control. The work was supported by the European Union's Horizon 2020 research and innovation programme (Marie Sk{\l}odowska-Curie Individual Fellowship) under grant agreement No 655204, EasyEBC, and Agence Nationale de la Recherche (ANR)/Deutsche Forschungsgemeinschaft (DFG) project INFIDHEM, ID ANR-16-CE92-0028.

\small
\bibliographystyle{ieeetr}
\bibliography{references-jcp-2018}

\end{document}